\documentclass[11pt,reqno]{amsart}
\usepackage{fullpage}

\usepackage{graphicx}
\usepackage[draft]{hyperref}
\usepackage{amsmath,amsopn,amssymb,amsfonts,stmaryrd}
\usepackage{verbatim}
\usepackage{amsthm}
\usepackage{mathtools}
\usepackage{color}
\usepackage{enumitem}
\usepackage[framemethod=TikZ]{mdframed}
\usepackage{bbm}
\usepackage{mathrsfs}
\usepackage{booktabs}
\usepackage{caption}
\usepackage{bm}
\usepackage{tensor}
\usepackage{cancel}
\usepackage{float}

\usepackage{mathrsfs}
\usepackage[scr=boondox]{mathalfa}

\newcommand{\IR}{{\mathbb R}}%Reals
\newcommand{\IC}{{\mathbb C}}%Complex

\newcommand{\IP}{{\mathbb P}}%Projective
\newcommand{\IZ}{{\mathbb Z}}%Integers

\newcommand{\IN}{{\mathbb N}}%Natural numbers
\newcommand{\IH}{{\mathbb H}}%quaternions
\newcommand{\CC}{\mathscr{C}}
\newcommand{\CD}{\mathscr{D}}

\newcommand{\calF}{\mathcal{F}}

\newcommand{\calI}{\mathcal{I}}
\newcommand{\calJ}{\mathcal{J}}
\newcommand{\calK}{\mathcal{K}}
\newcommand{\calL}{\mathcal{L}}

\newcommand{\calN}{\mathcal{N}}

\newcommand{\calT}{\mathcal{T}}

\renewcommand{\Re}{\mathrm{Re}}

\newcommand{\sgn}{\mbox{sgn}}
\newcommand{\vc}[1]{\bm{#1}}

\newcommand{\U}{\mathrm{U}}

\newcommand{\SL}{\mathrm{SL}}

\theoremstyle{plain}
\newtheorem{thm}{Theorem}[section]
\newtheorem{cor}[thm]{Corollary}
\newtheorem{lem}[thm]{Lemma}

\newtheorem{prop}[thm]{Proposition}

\newtheorem{rem}[thm]{Remark}

\theoremstyle{definition}

\numberwithin{equation}{section}

\newcommand{\pmat}[1]{\left( \smallmatrix #1 \endsmallmatrix \right)}
\newcommand{\mat}[1]{\left( \begin{matrix} #1 \end{matrix} \right)}

\renewcommand{\sgn}{\textnormal{sgn}}

\def\lp{\left(}
\def\rp{\right)}

\def\a{\alpha}
\def\b{\beta}
\def\d{\delta}

\def\k{\kappa}
\def\l{\lambda}

\def\w{\omega}

\def\z{\zeta}
\def\th{\theta}
\def\vth{\vartheta}

\def\e{\varepsilon}

\def\s{\sigma}
\def\g{\gamma}
\def\t{\tau}

\def\GG{\Gamma}

\def\LL{\Lambda}
\def\TH{\Theta}

\def\del{ \partial}

\newcommand{\re}{{\rm Re}}
\newcommand{\im}{{\rm Im}}

\renewcommand{\sgn}{{\rm sgn}}

\def\wh{\widehat}
\def\bar{\overline}
\newcommand{\andd}{\quad \mbox{ and } \quad}
\newcommand{\where}{\quad \mbox{ where }}

\makeatletter
\let\ORI@left\left
\protected\def\left#1{\mathopen{}\ORI@left#1}
\makeatother

\setlist[itemize]{leftmargin=*}

\makeatletter
\newcommand{\vast}{\bBigg@{2}}
\newcommand{\Vast}{\bBigg@{5}}
\makeatother

\renewcommand{\pmod}[1]{\ \left( \mathrm{mod} \, #1 \right)}

\newcommand{\ddiv}[2]{\d_{\smash{#1} \equiv 0 \pmod{#2}}}
\newcommand{\Lreg}[1]{\calL^{\mathrm{reg}}_{#1}}
\newcommand{\Lsing}[1]{\calL^{\mathrm{sing}}_{#1}}

\newcommand{\ccc}{\kappa}
\newcommand{\ff}{\mathscr{f}}
\newcommand{\hh}{\mathscr{h}}

\newcommand{\musqmsmall}{\{ - \frac{\mu_2^2}{4m_2} \}}
\newcommand{\musqm}{\scalebox{0.65}{$\left\{  - \frac{\mu_2^2}{4m_2} \right\}$}}
\newcommand{\musqmbig}{\left\{ \! - \frac{\mu_2^2}{4m_2} \right\}}
\newcommand{\psum}{\sideset{}{^*}\sum}

\makeatletter
\newcommand{\nolisttopbreak}{\par\nobreak\@afterheading} 
\makeatother

\makeatletter
\def\l@subsection{\@tocline{2}{0pt}{2.5pc}{5pc}{}}
\makeatother

\allowdisplaybreaks

\author{Kathrin Bringmann and Caner Nazaroglu}
\address{University of Cologne, Department of Mathematics and Computer Science, Weyertal 86-90, 50931 Cologne, Germany}
\email{kbringma@uni-koeln.de}
\email{cnazarog@uni-koeln.de}

\makeatletter
\@namedef{subjclassname@2020}{%
\textup{2020} Mathematics Subject Classification}
\makeatother

\subjclass[2020]{11F30, 11F37}
\keywords{Eisenstein series, higher depth mock modular forms.}
\title{Depth Two Mock Modularity by Eisenstein Series Coupling}

\begin{document}

\begin{abstract}
The notion of depth two and higher mock modular forms have found important applications in mathematical physics and enumerative geometry since their inception through indefinite theta functions with general signature. These theta functions generalize Zwegers' work on Lorentzian signature lattices and the framework of mock modular forms that emanated from it. Mock modular forms can also be studied through Eisenstein and Poincar\'e series. The interaction of this second point of view with the indefinite theta function approach yields a wealth of tools to unearth the rich structure behind mock modular forms. For mock modular forms of higher depth, on the other hand, indefinite theta functions and their variants largely remained the only available approach. In this paper, we show that one can indeed get mock modular forms of depth two by ``coupling" a pair of Eisenstein series that yield depth one mock modular forms, thereby providing a new and independent approach to higher depth mock modular forms. We exemplify this new perspective on a depth two object that appeared in the context of Vafa--Witten invariants.

\vspace{-0.4cm}

\end{abstract}

\maketitle

\section{Introduction and Statement of Results}\label{sec:introduction}

The concept of mock modular forms originates from the modern understanding of Ramanujan’s mock theta functions through indefinite theta functions studied by Zwegers \cite{Zwe} and harmonic Maass forms introduced by Bruinier and Funke \cite{BF}. Broadly speaking, a \textit{(mixed) mock modular form} of weight $\k$, as defined by \cite{DMZ}, is a holomorphic function $h$ on $\IH$, which is not modular, but can be completed to a real-analytic modular form by adding products of modular forms and \textit{non-holomorphic Eichler integrals} of modular forms. More specifically, this real-analytic modular completion $\wh{h}$ transforms with weight $\k$ under modular transformations (perhaps under a subgroup of $\SL_2 (\IZ)$ and with some multiplier) and satisfies\footnote{
Here and throughout we write $\t = u + i v$ with $u,v \in \IR$.
We also retain the explicit dependence on $\t$ and $\bar{\t}$.
These modular completions are specializations of bimodular forms, where the two variables are independent.
}
\begin{equation}\label{eq:mock_modular_form_condition}
\frac{\del}{\del \bar{\t}} \wh{h} (\t,\bar{\t})
\in \bigoplus_j v^{r_j} \mathfrak{M}_{\k+r_j} \otimes \bar{\mathfrak{M}_{2+r_j}} .
\end{equation}
Here $\mathfrak{M}_{r}$ denotes the vector space of (weakly) holomorphic modular forms of weight $r$ (for an appropriate subgroup of $\SL_2 (\IZ)$ and multiplier).\footnote{
One chooses the growth conditions for the involved (mock) modular objects depending on the specific context. Our interest, however, is in the general framework and we avoid this specialization.
}
Indefinite theta functions on signature $(n-1,1)$ lattices \cite{Zwe} fit into this structure. The instances where only the case $r_j = - \k$ arises and the contribution from $\mathfrak{M}_0$ is a constant have completions corresponding to harmonic Maass forms. This latter case is where the use of Eisenstein and Poincar\'e series has been fruitful.

In fact, one of the earlier examples of functions that fits into this structure is Zagier's weight~$\frac{3}{2}$ Eisenstein series \cite{HZ, Zag} that yields a mock modular form whose Fourier coefficients are given by Hurwitz class numbers. For the vector-valued adaptation we use in this paper, these Eisenstein series are defined as follows (for $m \in \IN$, $\mu \in \IZ/2m\IZ$, $\t \in \IH$, and $s \in \IC$ with $\re(s) > \frac{1}{4}$)
\begin{equation}\label{eq:depth_one_eisenstein}
\wh{f}_{m,\mu} (\t, \bar{\tau}; s) := 
\frac{v^s}{2} 
\sum_{(c,d) \in \LL}
\frac{\Psi_m (M_{c,d})_{0,\mu}}{|c \t + d|^{2s} (c \t + d)^{\frac{3}{2}}}.
\end{equation}
Here and throughout we use the principal value of the logarithm to define general powers. 
The~Weil multiplier $\Psi_m$ is given in Section~\ref{sec:Weil_multiplier_review}, $M_{c,d}$ denotes an element of $\SL_2(\IZ)$ with lower row $(c,d)$, and 
\begin{equation*}
\LL := \{ (c,d) \in \IZ^2 \setminus \{(0,0)\} : \ \gcd (c,d) = 1 \} .
\end{equation*}
The case $m=1$ corresponds to \cite{Zag} and $s \mapsto \wh{f}_{1,\a} (\t, \bar{\tau}; s)$ analytically continues to $s=0$. There it gives the modular completion $\wh{f}_{1,\a} (\t, \bar{\t})$ of a mock modular form $f_{1,\a} (\t)$ with
\begin{equation}\label{eq:f1_fourier_coef_hurwitz}
f_{1,0} (\t) =
-12 \sum_{n\geq 0} H(4n)  q^{n} 
\andd
f_{1,1} (\t) =
-12 \sum_{n\geq 0} H(4n+3)  q^{n+\frac{3}{4}} .
\end{equation}
Thus these yield the usual generating function of Hurwitz class numbers $H(N)$, rescaled by $-12$ so that its constant term is $1$:
\begin{equation}\label{eq:f1_fourier_coef_hurwitz_examples}
\mat{f_{1,0} (\t) \\ f_{1,1} (\t)}
=
\mat{1 - 6q - 12q^2 - 16q^3 - 18q^4 - 24q^5 - 24q^6 - 24q^7 - 36q^8 + \ldots  
\\ -4q^{\frac34} - 12q^{\frac74} - 12q^{\frac{11}4} - 24q^{\frac{15}4} - 12q^{\frac{19}4} - 36q^{\frac{23}4} - 16q^{\frac{27}4} + \ldots} .
\end{equation}
The modular completion of $f_{1,\a}$ is given by (with $\a \in \IZ/2\IZ$) 
\begin{equation}\label{eq:f1_modular_completion}
\wh{f}_{1,\a} (\t, \bar{\tau}) = f_{1,\a} (\t)
+ \frac{3i}{\pi \sqrt{2}} \int_{-\bar{\t}}^{i \infty} 
\frac{\vth_{1, \a} (w)}{(-i(w+\t))^{\frac{3}{2}}} dw ,
\end{equation}
where we recall \textit{Jacobi theta functions} (for $m\in \IN$, $\mu \in \IZ/2m\IZ$, and $\t \in \IH$)\footnote{
Note that $(\vth_{m, \mu} (\t))_{\mu \in \IZ/2m\IZ}$ forms a vector-valued holomorphic modular form of weight~$\frac{1}{2}$ and multiplier $\Psi_m$:
\begin{equation*}
\vth_{m, \mu} \lp \frac{a \t + b}{c \t + d} \rp
=  
\sqrt{c \t + d} \sum_{\nu \in \IZ/2m\IZ} \hspace{-0.3cm}
\Psi_{m}(M)_{\mu,\nu} 
\vth_{m, \nu} (\t) 
\quad \mbox{for } 
M := \mat{a & b \\ c & d} \in \SL_2(\IZ).
\end{equation*}
\vspace{-0.6cm}
}
\begin{equation}\label{eq:vth_definition}
\vth_{m, \mu} (\t) := \sum_{\ell \in \IZ + \frac{\mu}{2m}} e^{2 \pi i m \ell^2 \t} .
\end{equation}
Specifically, the completion transforms with the dual of the Weil multiplier for the $A_1$-lattice,\footnote{
Shintani \cite{Shin} derived essentially the same modular transformations from the perspective of zeta functions associated to prehomogeneous vector spaces~\cite{SatoShin}.
Note that \cite{Shin} also contains a discussion of the positive discriminant case.
This was studied by Duke, Imamoglu, and T\'oth \cite{DIT} from another perspective, leading to the notion of sesquiharmonic Maass forms \cite{BDR}. These yield a distinct but similar notion of mock modularity of higher depths.
}
\begin{equation*}
\wh{f}_{1,\a} \lp \frac{a\t+b}{c \t+d}, \frac{a\bar{\t}+b}{c\bar{\t}+d} \rp
=
(c \t + d)^{\frac{3}{2}} \sum_{\b \in \IZ/2\IZ} 
\Psi^*_{1}(M)_{\a,\b} 
\wh{f}_{1,\b} \lp \t, \bar{\tau} \rp 
\quad \mbox{for } 
M := \mat{a & b \\ c & d} \in \SL_2(\IZ).
\end{equation*}
It satisfies \eqref{eq:mock_modular_form_condition} with $f_{1,\a}$ having the shadow\footnote{
We call the modular forms appearing on the right-hand side of \eqref{eq:mock_modular_form_condition} the \textit{shadow} of the mock modular form. 
The constant prefactors are largely convention dependent, so we do not fix one and instead simply display the $\bar{\t}$-derivative. 
}
\begin{equation*}
(2 v)^{\frac{3}{2}} \frac{\del}{\del \bar{\t}}
\wh{f}_{1,\a} (\t, \bar{\tau}) 
=
\frac{3i}{\pi \sqrt{2}}
\vth_{1, \a} (-\bar{\t}).
\end{equation*}

This generalizes to other cases of $m$, $\mu$, as we review in Section \ref{sec:weight_3_2_eisenstein}. In particular, restricting to odd, square-free $m$ for technical simplicity, the function $s \mapsto \wh{f}_{m,\mu} (\t, \bar{\tau}; s)$ analytically continues to $s=0$ and its value $\wh{f}_{m,\mu} (\t, \bar{\tau})$ there is the modular completion of a mock modular form $f_{m,\mu} (\t)$~as
\begin{equation*}
\wh{f}_{m,\mu} (\t, \bar{\tau}) = f_{m,\mu} (\t)
+ C_m \int_{-\bar{\t}}^{i \infty} 
\frac{\TH_{m, \mu} (w)}{(-i(w+\t))^{\frac{3}{2}}} dw .
\end{equation*}
Here we define (with $\s$ denoting the sum of divisors function)
\begin{equation}\label{eq:C_m_and_TH_m_mu_definition}
C_m := \frac{3i}{\pi \s(m)} \sqrt{\frac{m}{2}} 
\andd
\TH_{m, \mu} (\t) := \sum_{d \mid m} \vth_{m, \xi_{d,m} \mu} (\t),
\end{equation}
where $\xi_{d,m}$ realize the Atkin--Lehner operators of \cite{EZ}.
The function $(\vth_{m, \xi_{d,m} \mu} (\t))_{\mu \in \IZ/2m\IZ}$, in particular, forms a weight $\frac{1}{2}$ vector-valued holomorphic modular form transforming with the same multiplier as $(\vth_{m,\mu} (\t))_{\mu \in \IZ/2m\IZ}$, namely $\Psi_m$; see Section~\ref{sec:atkin_lehner_operators} for further details.

In this paper we extend these points to a class of functions that generalize mock modular forms. Recall that Zwegers' work \cite{Zwe} was later extended to indefinite theta functions on arbitrary signature lattices starting with Alexandrov, Banerjee, Manschot, and Pioline \cite{ABMP} for lattices of signature $(n-2,2)$. This was further generalized and elaborated in \cite{FK, Kudla, Naz}.~The resulting functions lead to the notion of mock modular forms of higher depth as defined by Zagier and Zwegers in their unpublished work on such theta functions. The vector space $\mathfrak{M}_\k^d$ of weight $\k$ and depth~$d$ mock modular forms, consisting of (vector-valued) holomorphic functions $h$ on $\IH$, and the space of their modular completions $\wh{\mathfrak{M}}_\k^d$ are defined recursively as follows (with $\mathfrak{M}_\k^0 = \wh{\mathfrak{M}}_\k^0 = \mathfrak{M}_\k$): The modular completion $\wh{h} \in \wh{\mathfrak{M}}_\k^d$ of $h \in \mathfrak{M}_\k^d$ is a real-analytic modular form of weight $\k$ such that (for appropriate modular subgroups, multipliers, and growth conditions)
\begin{equation}\label{eq:higher_depth_mock_modular_form_condition}
\frac{\del}{\del \bar{\t}} \wh{h} (\t,\bar{\t})
\in \bigoplus_j v^{r_j} \wh{\mathfrak{M}}^{d-1}_{\k+r_j} \otimes \bar{\mathfrak{M}_{2+r_j}} .
\end{equation}
In particular, the mock modular forms discussed above correspond to $d=1$ by \eqref{eq:mock_modular_form_condition}, whereas the product of two mock modular forms yields (trivially) a depth two mock modular form.

Nontrivial examples of higher depth modular forms can be found via indefinite theta functions. Such objects have since found applications in a wide range of topics in mathematics and physics (e.g.~\cite{ABMP2, AMP, AP, GMN, Man17}). As an example, we consider Vafa--Witten invariants \cite{VW94} for the surface~$\IC\IP^2$ and gauge group $\U(3)$. Manschot \cite{Man17} showed that the generating function of such invariants has the form $\frac{h_{3,\mu}}{\eta^9}$ with $\mu\in\IZ/3\IZ$, where $\eta$ is the Dedekind eta function and\footnote{
In \cite{Man17}, $h_{3,\mu}$ was denoted by $f_{3,\mu}$; see (6.19) there. Also see Appendix A of \cite{Man17} to find how $h_{3,\mu}$ can be expressed in terms of generalized Appell--Lerch functions (which are specific indefinite theta functions).
}
\begin{equation}\label{eq:h_3_0_1_fourier_expansion_vafa_witten}
\mat{h_{3,0} (\t) \\ h_{3,\pm 1} (\t)}
=
\mat{\frac{1}{9} - q + 3 q^2 + 17 q^3 + 41 q^4 + 78 q^5 + 120 q^6 + 193 q^7 + 
240 q^8 + \ldots  
\\ 
 3 q^{\frac53} + 15 q^{\frac83} + 36 q^{\frac{11}3} + 69 q^{\frac{14}3} + 114 q^{\frac{17}3} + 
 165 q^{\frac{20}3}  + \ldots} 
\end{equation}
is a depth two mock modular form.
Its completion is
\begin{equation}\label{eq:depth_two_example}
\wh{h}_{3,\mu} (\t, \bar{\t}) =
h_{3,\mu} (\t) + \frac{3 i \sqrt{3}}{8 \pi \sqrt{2}} 
\sum_{\a \in \IZ/2\IZ} \int_{-\bar{\t}}^{i \infty} 
\frac{\wh{f}_{1,\a} (\t,-w) \vth_{3,2\mu+3\a} (w)}{(-i (w + \t))^{\frac{3}{2}} } dw. 
\end{equation}
It transforms with weight $3$ and the dual of the Weil multiplier for the $A_2$-lattice (see Section~\ref{sec:Weil_multiplier_review})~as
\begin{equation}\label{eq:depth_two_example_modular_transformation}
\wh{h}_{3,\mu} \lp \frac{a\t+b}{c \t+d}, \frac{a\bar{\t}+b}{c\bar{\t}+d} \rp
=
(c \t + d)^{3} \sum_{\nu \in \IZ/3\IZ} 
\Psi^*_{\! A_2} (M)_{\mu,\nu}
\wh{h}_{3,\nu} \lp \t, \bar{\tau} \rp  
\quad \mbox{for } 
M := \mat{a & b \\ c & d} \in \SL_2(\IZ) .
\end{equation}
In particular, the shadow of $h_{3,\mu}$ is 
consistent with \eqref{eq:higher_depth_mock_modular_form_condition} for $d=2$.

By multiplying two copies of the weight $\frac{3}{2}$ Eisenstein series in \eqref{eq:depth_one_eisenstein}, one obtains an object that analytically continues to $s=0$ to yield a trivially depth two mock modular form. The question we address in this paper is whether one can ``couple" these two Eisenstein series to obtain nontrivial depth two mock modular forms such as $h_{3,\mu}$ as well. To this end, for $m_1,m_2 \in \IN$, $\mu_j \in \IZ/2m_j\IZ$ for $j \in \{1,2\}$, $\t \in \IH$, and $s \in \IC$ with $\re (s) > \frac{1}{4}$, we define
\begin{multline}\label{eq:coupled_Eisenstein_definition}
\wh{H}_{\vc{m},\vc{\mu}} (\t, \bar{\t};s)
:=
\frac{isv^{2s}}{2}
\sum_{\substack{(c_1,d_1), (c_2,d_2) \in \LL \\ c_1 d_2 - c_2 d_1 \neq 0}}
\frac{\Psi_{m_1}(M_{c_1,d_1})_{0,\mu_1}}{|c_1\t+d_1|^{2s} \lp c_1\t+d_1 \rp^{\frac{3}{2}}} 
\frac{\Psi_{m_2}(M_{c_2,d_2})_{0,\mu_2}}{|c_2\t+d_2|^{2s} \lp c_2\t+d_2 \rp^{\frac{3}{2}}} 
\\[-2ex]
\times 
\arctan \lp \frac{c_1c_2 |\t|^2 + (c_1 d_2 + c_2 d_1) u + d_1 d_2}{(c_1 d_2 - c_2 d_1)v} \rp .
\end{multline}
Here and throughout we use bold letters to denote vectors and in particular let $\vc{m} := (m_1,m_2)$ and $\vc{\mu} := (\mu_1, \mu_2)$.
We also recall $C_m$ and $\TH_{m, \mu} (\t)$ from \eqref{eq:C_m_and_TH_m_mu_definition} and let (for $m_1,m_2$ odd, square-free)
\begin{equation}\label{eq:depth_two_holomorphic_definition}
H_{\vc{m},\vc{\mu}} (\t) := \sum_{n \in \IZ^{>0}_{\vc{m},\vc{\mu}}} 
\b_{\vc{m},\vc{\mu}} (n) e^{2 \pi i n \t}
\end{equation}
with\footnote{
More generally, we define $\IZ_{m,\mu} := \IZ - \frac{\mu^2}{4m}$ and 
\smash{$\IZ_{\vc{m},\vc{\mu}} := \IZ - \frac{\mu_1^2}{4m_1} - \frac{\mu_2^2}{4m_2}$}, while using superscripts as in $\IZ^{\geq 0}_{\vc{m},\vc{\mu}}$ and $\IZ^{\leq 0}_{\vc{m},\vc{\mu}}$ to restrict to elements that are $\geq 0$ or $\leq 0$.
}
$\IZ^{> 0}_{\vc{m},\vc{\mu}} := \{n \in \IZ - \frac{\mu_1^2}{4m_1} - \frac{\mu_2^2}{4m_2} : \, n>0 \}$
and, for $n \in \IZ^{> 0}_{\vc{m},\vc{\mu}}$,
\begin{align}
\notag 
\b_{\vc{m},\vc{\mu}} (n)
:= 
&
\frac{36 \sqrt{m_1}}{\s (m_1) \s (m_2)} \sum_{d_1 \mid m_1} 
\sum_{\ell \in \IZ + \frac{\xi_{d_1,m_1} \mu_1}{2m_1}} \hspace{-0.5cm}
\frac{\lp \sqrt{n +m_1 \ell^2} - |\ell| \sqrt{m_1} \rp^2}{\sqrt{n +m_1 \ell^2}}
\notag \\[-2.5ex] & \hspace{5.2cm} \times
\sum_{d_2 \mid m_2} d_2 \d_{d_2^2 \mid 4 m_2 (n+m_1\ell^2)}
H\lp \frac{4 m_2 (n+m_1\ell^2)}{d_2^2} \rp 
\notag \\[-1ex]
& -
\frac{36 \sqrt{m_2}}{\s (m_1) \s (m_2)} \sum_{d_2 \mid m_2} 
\sum_{\ell \in \IZ + \frac{\xi_{d_2,m_2} \mu_2}{2m_2}} \hspace{-0.5cm}
\frac{\lp \sqrt{n +m_2 \ell^2} - |\ell| \sqrt{m_2} \rp^2}{\sqrt{n +m_2 \ell^2}}
\notag \\[-2.5ex] & \hspace{5.2cm} \times
\sum_{d_1 \mid m_1} d_1 \d_{d_1^2 \mid 4 m_1 (n+m_2\ell^2)}
H\lp \frac{4 m_1 (n+m_2\ell^2)}{d_1^2} \rp  .
\label{eq:depth_two_holomorphic_fourier_coef_definition}
\end{align}
Here and throughout $\delta_{\mathcal{S}} := 1$ if $\mathcal{S}$ holds and $\delta_{\mathcal{S}} := 0$ otherwise.
We can now state our main result.

\begin{thm}\label{thm:coupled_Eisenstein_depth_two_analytic_continuation}
Let $m_1,m_2 \!\in \IN$ be odd, square-free, $\mu_j \in \IZ/2m_j\IZ$ for $j \in\! \{1,2\}$, $\t \in \IH$, and~${s \in \IC}$.
Then $s \! \mapsto \! \wh{H}_{\vc{m},\vc{\mu}} (\t, \bar{\t};s)$ analytically continues to $\re (s) \!> \! - \frac{1}{20}$ and $\wh{H}_{\vc{m},\vc{\mu}} (\t, \bar{\t}) := \wh{H}_{\vc{m},\vc{\mu}} (\t, \bar{\t};0)$ is
\begin{multline*}
\wh{H}_{\vc{m},\vc{\mu}} (\t, \bar{\t}) 
= 
H_{\vc{m},\vc{\mu}} (\t) 
+ C_{m_2} \int_{-\bar{\t}}^{i \infty} 
\frac{\wh{f}_{m_1,\mu_1} (\t,-w) \TH_{m_2, \mu_2} (w)}{(-i(w+\t))^{\frac{3}{2}}} dw
\\[-1ex]
- C_{m_1} \int_{-\bar{\t}}^{i \infty} 
\frac{\wh{f}_{m_2,\mu_2} (\t,-w) \TH_{m_1, \mu_1} (w)}{(-i(w+\t))^{\frac{3}{2}}} dw .
\end{multline*}
We have, for $M = \pmat{a & b \\ c&d} \in \SL_2(\IZ)$,
\begin{equation*}
\wh{H}_{\vc{m},\vc{\mu}} \lp \frac{a\t+b}{c \t+d}, \frac{a\bar{\t}+b}{c\bar{\t}+d} \rp
= (c \t + d)^3 
\sum_{\substack{\nu_1 \in \IZ/2m_1\IZ \\ \nu_2 \in \IZ/2m_2\IZ}}
\Psi^*_{m_1}(M)_{\mu_1,\nu_1}  \Psi^*_{m_2}(M)_{\mu_2,\nu_2} 
\wh{H}_{\vc{m},\vc{\nu}} (\t, \bar{\t}) .
\end{equation*}
In particular, $H_{\vc{m},\vc{\mu}} (\t)$ is a depth two mock modular form with the shadow
\begin{equation*}
(2 v)^{\frac{3}{2}} \frac{\del}{\del \bar{\t}}
\wh{H}_{\vc{m},\vc{\mu}} (\t, \bar{\t}) 
=
C_{m_2} \wh{f}_{m_1,\mu_1} (\t,\bar{\t}) \TH_{m_2, \mu_2} (-\bar{\t})
- C_{m_1} \wh{f}_{m_2,\mu_2} (\t,\bar{\t}) \TH_{m_1, \mu_1} (-\bar{\t}) .
\end{equation*}
\end{thm}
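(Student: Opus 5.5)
Modularity is almost formal, so I would establish it first and leave the work for the analytic continuation and the value at $s=0$.

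\emph{Modularity for $\re(s)>\frac14$.} Under $\t\mapsto M\t$, each pair $(c_j,d_j)$ is replaced by $(c_j,d_j)M$, and the Weil multipliers factor through $\Psi_{m_j}(M_{c_j,d_j}M)=\Psi_{m_j}(M_{c_j,d_j})\Psi_{m_j}(M)$. This is the same mechanism as for $\wh{f}_{m,\mu}(\t,\bar\t;s)$ in Section~\ref{sec:weight_3_2_eisenstein}. The arctan argument is invariant: it equals $B(\vc{c}_1,\vc{c}_2)/\det$, where $B$ is the bilinear form $(c_1\t+d_1)(c_2\bar\t+d_2)$ symmetrized and $\det=c_1d_2-c_2d_1$ is $\SL_2$-invariant. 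The factor $v^{2s}|c\t+d|^{\dots}$ works as usual. So for $\re(s)>\frac14$, where the double sum converges absolutely, $\wh{H}(\t,\bar\t;s)$ transforms with weight $3$ and multiplier $\Psi^*_{m_1}\otimes\Psi^*_{m_2}$. Once analytic continuation is established, this transformation law passes to $s=0$.

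\emph{Analytic continuation and the value at $s=0$.} I would use the identity $\arctan(x)=\frac{\pi}{2}\sgn(x)-\arctan(1/x)$ together with the key observation that, for the pair of cusps $d_1/c_1$ and $d_2/c_2$,
\[
\frac{\del}{\del\bar\t}\arctan\Bigl(\frac{B}{\det\, v}\Bigr)
\]
factorizes as a product of the two weight-$(\pm1)$ automorphy factors times $\det/v^2$. Concretely, I would compute $\del_{\bar\t}$ of $v^{2s}\times$(summand). The terms where the derivative hits $v^{2s}|c\t+d|^{-2s}$ carry a factor $s$, which together with the prefactor $s$ gives $s^2$. The arctan derivative gives $s\times$ a lattice sum that decouples, up to the constraint $\det\neq0$, into $\wh{f}_{m_1}(\t,\bar\t;s)\cdot(\text{a weight }(-\frac12,2)\text{ Eisenstein-type sum})$. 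The second factor has a simple pole at $s=0$ with residue proportional to $\TH_{m_2,\mu_2}(-\bar\t)$, exactly as in the depth-one computation that produces the shadow $C_m\TH_{m,\mu}$. The prefactor $s$ cancels this pole, which yields the claimed $\bar\t$-derivative at $s=0$. The antisymmetry under $1\leftrightarrow2$ (the arctan is odd in $\det$) gives the minus sign.

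Rather than differentiate, however, I would continue the function itself. Split the sum by fixing $(c_2,d_2)$ and summing over $(c_1,d_1)$ via a Poisson/Lipschitz expansion, as in Zagier's treatment. The arctan kernel makes the inner sum a Fourier series whose coefficients are confluent-hypergeometric-type integrals. Then:
\begin{itemize}
\item the $n>0$ terms give the holomorphic part with coefficients $\b_{\vc{m},\vc{\mu}}(n)$;
\item the divisor sums $\sum_{d\mid m} d\,\d_{d^2\mid\cdot}H(\cdot)$ arise from the Kloosterman/Salié sums, evaluated as for $f_{m,\mu}$;
\item the factor $(\sqrt{n+m\ell^2}-|\ell|\sqrt m)^2/\sqrt{n+m\ell^2}$ arises from the integral of $\arctan$ against the theta-type coordinate $\ell$.
\end{itemize}
Alternatively, I would verify the formula by checking that the difference $D$ between $\wh{H}(\cdot;0)$ and the right-hand side is annihilated by $\del_{\bar\t}$ (by the derivative computation above), hence holomorphic and modular of weight $3$, and then determine it from its Fourier expansion.

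\emph{Main obstacle.} The hardest step is controlling convergence uniformly down to $\re(s)>-\frac1{20}$ and showing that the Fourier coefficients at $s=0$ are exactly $\b(n)$, with no extra holomorphic cusp form or constant term. The exponent $-\frac1{20}$ suggests using Weil/Sali\'e-type bounds on the twisted Kloosterman sums together with nontrivial bounds for class-number sums. To rule out a leftover term, I would show that $D$ has coefficients vanishing for $n\le0$ (from the polar analysis) and that its growth at the cusp is moderate. That leaves only a weight-$3$ cusp form, which I would then show vanishes by comparing the explicit coefficient formula at $s=0$ term by term.
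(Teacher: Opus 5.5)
Your modularity argument is fine and is the paper's. The analytic continuation, however, is the real content of the theorem, and the proposal does not supply it. The tool you guess (Weil/Sali\'e bounds on Kloosterman sums) is also not the relevant input: the Sali\'e-type sums are evaluated exactly as $L(2s+1,\chi_D)$ times divisor sums (Lemma~\ref{lem:arithmetic_part_dirichlet_writing}).

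\textbf{What is missing.} After reducing to $c_1,c_2\ge 1$, the identity $\arctan(\frac{x_1x_2+1}{x_2-x_1})=\arctan(x_1)-\arctan(x_2)+\frac{\pi}{2}\sgn(x_2-x_1)$, with $x_j=\frac{1}{v}(u+\frac{d_j}{c_j})$, splits
$\wh{H}_{\vc{m},\vc{\mu}}=g_{m_1,\mu_1}\wh{f}_{m_2,\mu_2}-g_{m_2,\mu_2}\wh{f}_{m_1,\mu_1}+\calF_{\vc{m},\vc{\mu}}$.
Only $\calF$, with kernel $\sgn(\frac{d_2}{c_2}-\frac{d_1}{c_1})$, is genuinely coupled.

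Fixing $(c_2,d_2)$ and Poisson-summing in $(c_1,d_1)$ does not by itself give a Fourier expansion, because the inner sum is not $1$-periodic in $u$. A joint double Poisson summation gives
$\calF=s\sum_n e^{2\pi i n\t}\sum^{*}_{k}\calK(v,n,k;s)\calL_{m_1,\mu_1}(n-k;s)\calL_{m_2,\mu_2}(k;s)$.
The $\sgn$ kernel makes $\calK$ decay only like $\calI_3(v,n;2s)/\lfloor k\rfloor$, so the $k$-sum converges only conditionally.

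After summation by parts you need the partial sums $\sum_\pm\sum_{r\in\pm[k]+\{-\mu_2^2/(4m_2)\}}\pm\Lreg{m_1,\mu_1}(n-r;s)\Lreg{m_2,\mu_2}(r;s)$ to be $O(k^{1-\delta})$ for $\re(s)$ slightly negative. Termwise bounds, even with the Weyl-strength subconvexity bound for $L(\frac12+it,\chi_D)$, give only $k^{1+\frac43|\re(s)|+\e}$.

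The paper's key step has two parts:
\begin{itemize}
\item First, it finds cancellation inside the region of absolute convergence. The $r$-sum becomes $\sum_{r=1}^{k}\sin(2\pi r x)$ with $x=\frac{d_2}{c_2}-\frac{d_1}{c_1}$, which is bounded by $1/|\sin(\pi x)|$, and $\sum_{d=1}^{N-1}1/\sin(\frac{\pi d}{N})\ll N\log N$. This gives $O(1)$ for the full $\calL\calL$ convolution. After subtracting the $\Lsing{}$ contributions, it gives $O(k^{\frac12+\e})$ for $\Lreg{}\Lreg{}$ at $\re(s)=\frac14+\e$.
\item Second, it interpolates by Phragm\'en--Lindel\"of down to $\re(s)=-\frac14$. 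The resulting exponent condition $\frac12+\frac{5}{12}(1-4\re(s))<1$ is exactly the source of $-\frac{1}{20}$.
\end{itemize}

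\textbf{Your fallback route does not get around this.} Checking $\partial_{\bar{\t}}D=0$ presupposes the continuation. Your formal derivative is right in spirit: $\partial_{\bar{\t}}$ of the arctan is $-\frac{i}{2}\frac{c_1d_2-c_2d_1}{(c_1\bar{\t}+d_1)(c_2\bar{\t}+d_2)}$, with no $v^{-2}$. But the terms where $\partial_{\bar{\t}}$ hits $|c_j\t+d_j|^{-2s}$ are new arctan-weighted series that need their own continuation.

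Excluding a leftover weight-$3$ holomorphic form by ``comparing coefficients'' needs exactly the Fourier expansion at $s=0$ that you have not derived. Such forms do exist for these multipliers (cf.\ $\Theta^{[E_6]}_{\mu}$ in Section~\ref{sec:vafa_witten_example}).

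Once the continued expansion is in hand, no modularity argument is needed. At $s=0$:
\begin{itemize}
\item The $\Lreg{}\Lreg{}$ terms vanish because of the prefactor $s$.
\item The $\Lreg{}\Lsing{}$ cross terms give $\beta_{\vc{m},\vc{\mu}}(n)$. This uses the closed form of $\calK(v,n,k;0)$, which is zero for $n\le 0$ and equals, e.g., $-16\pi^3(\sqrt{n+m_2\ell^2}-|\ell|\sqrt{m_2})^2$ at $k=-m_2\ell^2$.
\item The $\Lsing{}\Lsing{}$ terms give the iterated Eichler integrals via $\calK'(v,n,k;0)$. Their $s^{-2}$ is compensated by the prefactor $s$ and by $\calK(v,n,k;s)=O(s)$ for $k,n-k\le 0$.
\end{itemize}
These iterated integrals then combine with $g\wh{f}$ at $s=0$ into the stated formula.
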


We can use this result to find the depth two mock modular form $h_{3,\mu}$ as well with such coupled Eisenstein series. To state this, for $\mu \in \IZ/3\IZ$, $\t \in \IH$, and $s \in \IC$ with $\re (s) > \frac{1}{4}$, we define
\begin{multline}\label{eq:h_3_mu_coupled_eisenstein_expression}
\wh{h}_{3,\mu} (\t, \bar{\t};s)
:=
\frac{v^{2s}}{4}
\sum_{\a \in \IZ/2\IZ} 
\sum_{(c_1,d_1), (c_2,d_2) \in \LL}
\frac{\Psi_{1}(M_{c_1,d_1})_{0,\a}}{|c_1\t+d_1|^{2s} \lp c_1\t+d_1 \rp^{\frac{3}{2}}} 
\frac{\Psi_{3}(M_{c_2,d_2})_{0,2\mu+3\a}}{|c_2\t+d_2|^{2s} \lp c_2\t+d_2 \rp^{\frac{3}{2}}} 
\\[-0.5ex]
\times 
\Bigg(
\frac{is}{4} \arctan \lp \frac{c_1c_2 |\t|^2 + (c_1 d_2 + c_2 d_1) u + d_1 d_2}{(c_1 d_2 - c_2 d_1)v} \rp \d_{c_1 d_2 - c_2 d_1 \neq 0} 
+ \frac{1}{8}
- \frac{1}{72} \d_{c_1 d_2 - c_2 d_1 = 0} 
\Bigg) .
\end{multline}
As we show next, this approach also yields an expression for the Fourier coefficients of $h_{3,\mu}$, independent from the one obtained via Appell functions in \cite{Man17}. Here we recall the theta function $\Theta^{[E_6]}_{\mu}$ for the lattice $E_6$ and its cosets in the dual lattice (labeled by $\mu \in \IZ/3\IZ$) with Fourier coefficients
\begin{equation}\label{eq:E6_theta_function}
\Theta^{[E_6]}_{\mu} (\t) =:
\sum_{n \in \IN_0 + \epsilon_\mu} \!\!\!\! r^{[E_6]}_{\mu} (n)  q^n
\quad \mbox{with }
\mu \in \IZ/3\IZ
\mbox{ and }
\epsilon_\mu := 
\begin{cases}
0 \quad &\mbox{if } \mu = 0, \\
\frac{2}{3} \quad &\mbox{if } \mu = \pm 1,
\end{cases}
\end{equation}
given by 
\begin{equation*}
\mat{\Theta^{[E_6]}_{0} (\t) \\\\[-11pt] \Theta^{[E_6]}_{\pm 1} (\t)}
=
\mat{1 + 72 q + 270 q^2 + 720 q^3 + 936 q^4 + 2160 q^5 + 2214 q^6 + 
3600 q^7 + 4590 q^8 + \ldots  
\\
 27 q^{\frac23} + 216 q^{\frac53} + 459 q^{\frac83} + 1080 q^{\frac{11}3} + 
 1350 q^{\frac{14}3} + 2592 q^{\frac{17}3} + 2808 q^{\frac{20}3}  + \ldots} .
\end{equation*}

\begin{cor}\label{cor:h_3_mu_coupled_eisenstein_expression}
For $\mu \in \IZ/3\IZ$ and $\t \in \IH$, the function $s \mapsto \wh{h}_{3,\mu} (\t, \bar{\t};s)$ analytically continues to $\re (s) > -\frac{1}{20}$. We have $\wh{h}_{3,\mu} (\t, \bar{\t};0) = \wh{h}_{3,\mu} (\t, \bar{\t})$, where $\wh{h}_{3,\mu} (\t, \bar{\t})$ is the modular completion of the depth two mock modular form $h_{3,\mu} (\t)$ noted in \eqref{eq:depth_two_example}.
In particular, for 
\begin{equation}\label{eq:h_3_mu_fourier_expansion}
h_{3,\mu} (\t) =: \sum_{n \in \IN_0 + \epsilon_\mu} \!\!\!\! c_\mu (n)  q^n,
\end{equation}
this expression in terms of the coupled Eisenstein series implies the identity, for $n \in \IN_0 + \epsilon_\mu$,
\begin{align}
c_\mu (n)
&= 
\frac{9}{16} \sum_{\ell \in \IZ} 
\frac{\lp \sqrt{4n+\ell^2} - |\ell| \rp^2}{\sqrt{4n+\ell^2}}
\lp H \lp 12 n + 3\ell^2 \rp 
+ 3 \d_{9 \mid  12n + 3\ell^2} H \lp \frac{4 n+\ell^2}{3} \rp \rp
\notag\\[-0.8ex] & \qquad
- \frac{9 \sqrt{3}}{8} 
\sum_{\ell \in \IZ + \frac{2\mu}{3}}
\frac{\lp \sqrt{4n+3\ell^2} - \sqrt{3} |\ell| \rp^2}{\sqrt{4n+3\ell^2}}
H\lp 4n+3\ell^2 \rp 
\notag\\[-0.8ex] & \qquad
+ \frac{9}{2} \sum_{\substack{k \in \IN_0 + \left\{0,\frac{3}{4} \right\} \\ k \leq n}} 
\!\!\!\!\!\!
H(4k) \lp H(12(n-k)) + 3 \d_{9 \mid 12 (n-k)} H \lp \frac{4(n-k)}{3} \rp \rp 
- \frac{1}{72} r^{[E_6]}_{\mu} (n) .
\label{eq:c_mu_n_definition_from_corollary}
\end{align}
\end{cor}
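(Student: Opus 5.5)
The plan is to decompose $\wh{h}_{3,\mu}(\t,\bar\t;s)$ from \eqref{eq:h_3_mu_coupled_eisenstein_expression} into three pieces according to the three terms in the large parenthesis, and treat each separately at $s=0$.

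\textbf{The $\arctan$ term.} This is exactly $\frac{1}{8}\sum_{\a}\wh{H}_{(1,3),(\a,2\mu+3\a)}(\t,\bar\t;s)$, with the normalisation $\frac{v^{2s}}{4}\cdot\frac{is}{4}=\frac{1}{8}\cdot\frac{isv^{2s}}{2}$. By Theorem~\ref{thm:coupled_Eisenstein_depth_two_analytic_continuation} it continues to $\re(s)>-\frac1{20}$. At $s=0$ it gives the holomorphic part $\frac18\sum_\a H_{(1,3),(\a,2\mu+3\a)}$ together with two integrals of the form $\wh f\cdot\TH$. With $m_1=1$, $m_2=3$ we have $\s(1)=1$, $\s(3)=4$, $C_1=\frac{3i}{\pi\sqrt2}$, $C_3=\frac{3i}{4\pi}\sqrt{\frac32}$ and $\TH_{3,\nu}=\vth_{3,\nu}+\vth_{3,\xi_{3,3}\nu}$.

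\textbf{The $\frac18$ term.} Summed over all pairs (including $c_1d_2=c_2d_1$), this is $\frac{1}{8}\cdot\frac{v^{2s}}{4}\cdot 4\,\frac{\wh f_{1,\a}(\t,\bar\t;s)}{v^s}\frac{\wh f_{3,2\mu+3\a}(\t,\bar\t;s)}{v^s}$, i.e.\ a product of two depth-one Eisenstein series. Its continuation follows from the results reviewed in Section~\ref{sec:weight_3_2_eisenstein}. At $s=0$ it equals $\frac18\sum_\a \wh f_{1,\a}\wh f_{3,2\mu+3\a}$.

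\textbf{The $\d_{c_1d_2=c_2d_1}$ term.} The condition forces $(c_2,d_2)=\pm(c_1,d_1)$, so this piece collapses to a single Eisenstein series of weight $3$:
\[
-\frac{v^{2s}}{288}\sum_\a\sum_{(c,d)\in\LL}\frac{\pm\Psi_1(M)_{0,\a}\Psi_3(M)_{0,2\mu+3\a}}{|c\t+d|^{4s}(c\t+d)^3},
\]
with the signs coming from the square roots. The key observation is that $\sum_\a\Psi_1\otimes\Psi_3$ restricted to the glue $(\a,2\mu+3\a)$ realises the Weil representation of $A_1\oplus A_2^{*}$-type glued to $E_6$. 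More precisely, $A_1(\text{norm }2)\oplus$ the rank-one lattice of norm $6$ glues to $A_2$, while the dual representation pairs with $E_6$ via $A_2\oplus E_6\subset E_8$. The weight $3$ series therefore converges absolutely at $s=0$ and equals $\Theta^{[E_6]}_\mu$, up to a normalisation fixed by the constant term, by Siegel--Weil. This normalisation is why the coefficient $-\frac1{72}$ appears, matching the constant $-\frac{1}{72}$ in the corollary.

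\textbf{Matching with $\wh h_{3,\mu}$.} I would show that the sum of the three pieces has $\bar\t$-derivative equal to that of \eqref{eq:depth_two_example}. The derivative of the product term is $\wh f_{3}\,\del_{\bar\t}\wh f_1+\wh f_1\,\del_{\bar\t}\wh f_3$. The $\wh f_1$-shadow piece should cancel the $-C_{m_1}$ integral term from the theorem, because the $\frac18$ and $\frac18$ coefficients are tuned so that this happens. The other piece, $\wh f_1\vth_3$, combines with the $+C_{m_2}$ term to give the kernel $\frac{3i\sqrt3}{8\pi\sqrt2}\wh f_1\vth_{3,2\mu+3\a}$, using $\TH_{3,\nu}$ and $\vth_{3,\xi\nu}$ to relabel the $\a$-sum. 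Hence the difference of the two sides is a holomorphic modular form of weight $3$ for $\Psi^*_{A_2}$. Both sides transform the same way: for $\wh h_{3,\mu}(\cdot;0)$ this follows from the theorem and the product of depth-one transformations. I would then compare enough Fourier coefficients against \eqref{eq:h_3_0_1_fourier_expansion_vafa_witten} to show that the difference vanishes, using the finite-dimensionality of the space and a Sturm-type bound.

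\textbf{Reading off $c_\mu(n)$.} Finally, the identity for $c_\mu(n)$ follows by taking the holomorphic parts. The $\frac18 H$ term gives the two $\ell$-sums, specialising \eqref{eq:depth_two_holomorphic_fourier_coef_definition}; here $\xi_{3,3}$ produces the $3\d_{9\mid\cdot}H(\cdot/9)$ terms. The product $\frac18 f_1f_3$ gives the Hurwitz convolution, using \eqref{eq:f1_fourier_coef_hurwitz} and its $m=3$ analogue. The constant-term piece gives $-\frac1{72}r^{[E_6]}_\mu$.

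\textbf{Main obstacle.} I expect the shadow matching to be the main obstacle. The non-holomorphic parts of $\frac18\wh f_1\wh f_3$ produce products of holomorphic mock parts with Eichler integrals. These must be shown to combine with the theorem's integrals, which involve the full $\wh f$, into exactly the single integral of \eqref{eq:depth_two_example}, and the Atkin--Lehner bookkeeping for $\TH_3$ has to be carried along throughout.
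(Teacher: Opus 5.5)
Your proposal is correct and follows essentially the paper's argument. The paper makes the same three-way splitting of $\wh{h}_{3,\mu}(\t,\bar{\t};s)$ into $\frac18\sum_{\a}\wh{H}_{(1,3),(\a,2\mu+3\a)}$, $\frac18\sum_{\a}\wh{f}_{1,\a}\wh{f}_{3,2\mu+3\a}$ and $-\frac{1}{72}$ times a weight $3$ Eisenstein series; there the diagonal terms $(c_2,d_2)=\pm(c_1,d_1)$ contribute equally by \eqref{eq:weil_multiplier_minus_cd_property}, not with opposite signs, and \eqref{eq:Weil_three_decomposition} with unitarity collapses the $\a$-sum to $\Psi_{\!A_2}(M_{c,d})_{0,\mu}$. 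The shadow comparison is also the paper's, phrased there as the vanishing shadow of $\calT_\mu$. It is a short computation rather than a real obstacle, because the product of the two Eichler integrals in $\wh{f}_{1,\a}\wh{f}_{3,2\mu+3\a}$ splits into the two iterated integrals. Where you invoke Siegel--Weil and a Sturm bound, the paper instead uses that weight $3$ forms with multiplier $\Psi^*_{\!A_2}$ form a one-dimensional space spanned by $\Theta^{[E_6]}_\mu$. Either way only the constant term of the $\mu=0$ component needs comparing: with the vector-valued Sturm bound $\frac{k}{12}=\frac14$ you never touch coefficients of $H_{(1,3),\cdot}$.

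One bookkeeping slip: the terms $3\d_{9\mid\cdot}H(\cdot/9)$ come from the $d_2=3$ divisor term in $\b_{\vc{m},\vc{\mu}}$ (and in $\a_{3,\nu}$), not from $\xi_{3,3}$. Since $\xi_{3,3}\equiv-1\pmod{6}$, it only gives $\TH_{3,\nu}=2\vth_{3,\nu}$, i.e.\ the factor $2$ absorbed into the coefficient $-\frac{9\sqrt{3}}{8}$.
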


The paper is organized as follows. 
In Section~\ref{sec:preliminaries}, we review preliminaries on analytic tools we use and multiplier systems that appear in our paper. 
In Section~\ref{sec:weight_3_2_eisenstein}, we give an overview of Zagier's weight $\frac{3}{2}$ Eisenstein series to set up notation and certain bounds we employ in later chapters. 
In Section~\ref{sec:non_hol_Eichler_continuation}, we apply these methods to a function yielding non-holomorphic Eichler integrals of modular forms at $s=0$.
In Section~\ref{sec:first_properties_coupled_eisenstein}, we start our investigation of the coupled Eisenstein series defined in \eqref{eq:coupled_Eisenstein_definition} and describe some of its basic properties.
Section~\ref{sec:analytic_atirhmetic_parts_coupled_eisenstein} introduces and examines the respective analytic and arithmetic building blocks appearing in the Fourier expansion of such coupled Eisenstein series.
This is used in Section~\ref{sec:analytic_continuation_coupled_eisenstein} to prove Theorem~\ref{thm:coupled_Eisenstein_depth_two_analytic_continuation} and, in particular, to establish the analytic continuation of the series we introduce.
Finally, we apply this machinery in Section~\ref{sec:vafa_witten_example} to demonstrate its use in the above-mentioned generating function of rank two Vafa--Witten invariants and prove Corollary~\ref{cor:h_3_mu_coupled_eisenstein_expression}. We conclude with two appendices, providing details on the local factors appearing in the arithmetic parts and giving numerical checks, respectively.

\section*{Acknowledgments}
The authors have received funding from the European Research Council (ERC) under the European Union’s Horizon 2020 research and innovation programme (grant agreement No. 101001179).
The second author would like to thank the Isaac Newton Institute for Mathematical Sciences, Cambridge, for support and hospitality during the programme ``Black holes: bridges between number theory and holographic quantum information" where work on this paper was undertaken. This work was supported by EPSRC grant no EP/R014604/1.

\section*{Notation List}
Here we list some of the notation used throughout the paper.

\begin{itemize}
\item $\a_{m,\mu} (n)$: the Fourier coefficients of the mock modular form $f_ {m,\mu} (\t)$,  see Proposition \ref{prop:depth_one_eisenstein_analytic_continuation}.

\item $\b_{\vc{m},\vc{\mu}} (n)$: the Fourier coefficients of $H_{\vc{m},\vc{\mu}} (\t)$ by \eqref{eq:depth_two_holomorphic_definition}, given in \eqref{eq:depth_two_holomorphic_fourier_coef_definition}.

\item $C_m := \frac{3i}{\pi \s(m)} \sqrt{\frac{m}{2}}$.

\item $c_\mu (n)$: the Fourier coefficients of $h_{3,\mu} (\t)$ by \eqref{eq:h_3_mu_fourier_expansion}.

\item $\chi_D (a) := ( \frac{D}{a} )$, where $( \frac{\cdot}{\cdot} )$ is the Kronecker symbol.

\item $D \in \IZ$: denotes a fundamental discriminant.

\item $\epsilon_\mu := 0$ if $\mu = 0$ and
$\epsilon_\mu := \frac{2}{3}$ if $\mu = \pm 1$, see \eqref{eq:E6_theta_function}.

\item  $\wh{f}_{m,\mu} (\t, \bar{\tau}; s)$: the weight $\frac{3}{2}$ Eisenstein series, defined in \eqref{eq:depth_one_eisenstein}. 

\item $\wh{f}_{m,\mu} (\t, \bar{\tau}) := \wh{f}_{m,\mu} (\t, \bar{\tau}; 0)$, see Proposition \ref{prop:depth_one_eisenstein_analytic_continuation}.

\item $f_ {m,\mu} (\t)$: the mock modular form, whose completion is $\wh{f}_{m,\mu} (\t, \bar{\tau})$, see Proposition \ref{prop:depth_one_eisenstein_analytic_continuation}.

\item $\calF_{\vc{m},\vc{\mu}} (\t, \bar{\t};s)$: an ingredient of $\wh{H}_{\vc{m},\vc{\mu}} (\t, \bar{\t};s)$, defined in \eqref{eq:dimension_two_part_Eisenstein}.

\item $g_{m,\mu} (\t, \bar{\tau}; s)$: an ingredient of $\wh{H}_{\vc{m},\vc{\mu}} (\t, \bar{\t};s)$, defined in \eqref{eq:depth_one_Eichler_eisenstein_pluswriting}.

\item $\wh{h}_{3,\mu} (\t, \bar{\t}; s)$: the function that is defined in \eqref{eq:h_3_mu_coupled_eisenstein_expression} and continues to $\wh{h}_{3,\mu} (\t, \bar{\t})$, see Corollary \ref{cor:h_3_mu_coupled_eisenstein_expression}.

\item $\wh{h}_{3,\mu} (\t, \bar{\t})$: the modular completion of $h_{3,\mu} (\t)$, given in \eqref{eq:depth_two_example_modular_transformation}.

\item $h_{3,\mu} (\t)$: the depth two mock modular form given in \cite{Man17}, see around \eqref{eq:h_3_0_1_fourier_expansion_vafa_witten}.

\item $\wh{H}_{\vc{m},\vc{\mu}} (\t, \bar{\t};s)$: the coupled Eisenstein series defined in \eqref{eq:coupled_Eisenstein_definition}.

\item $\wh{H}_{\vc{m},\vc{\mu}} (\t, \bar{\t}) := \wh{H}_{\vc{m},\vc{\mu}} (\t, \bar{\t};0)$, see Theorem \ref{thm:coupled_Eisenstein_depth_two_analytic_continuation}.

\item $H_{\vc{m},\vc{\mu}} (\t)$: the holomorphic part of $\wh{H}_{\vc{m},\vc{\mu}} (\t, \bar{\t})$, given in \eqref{eq:depth_two_holomorphic_definition}.

\item $\calI_\k(v,t;s)$: the ``analytic part" of the Fourier expansion of $\wh{f}_{m,\mu} (\t, \bar{\tau}; s)$, defined in \eqref{eq:Ik_definition}. 

\item $\calJ_\k(v,t;s)$: the analytic part of the Fourier expansion of $g_{m,\mu} (\t, \bar{\tau}; s)$, defined in \eqref{eq:Jk_definition}.

\item $\calK_{\vc{\k}} (v,t,\w;s)$: the analytic part of the Fourier expansion of $\calF_{\vc{m},\vc{\mu}} (\t, \bar{\t};s)$, defined in \eqref{eq:Kk_definition}.

\item $[k] := \{1,2,\ldots,k\}$.

\item $\LL := \{ (c,d) \in \IZ^2 \setminus \{\vc{0}\} : \ \gcd (c,d) = 1 \}$.

\item $\calL_{m,\mu} (n;s)$: the arithmetic parts of the Fourier expansion, defined in \eqref{eq:arithemtic_part_defn}.

\item $\Lreg{m,\mu} (n;s)$: a component of the decomposition \eqref{eq:Lreg_Lsing_decomp_definition} for the pole at $s=0$, defined in \eqref{eq:Lreg_definition}.

\item $\Lsing{m,\mu} (n;s)$: a component of the decomposition \eqref{eq:Lreg_Lsing_decomp_definition} for the pole at $s=0$, defined in \eqref{eq:Lsing_definition}.

\item $M_{c,d}$: an element of $\SL_2(\IZ)$ with lower row $(c,d)$.

\item $\vc{m} := (m_1,m_2)$.

\item $\vc{\mu} := (\mu_1,\mu_2)$.

\item $N_{m,\mu} (n, k) := | \{\nu \pmod{k} : \ 
m \nu^2 - \mu \nu + n + \frac{\mu^2}{4m}  \equiv 0 \pmod{k} \} |$.

\item $\Psi_m$: Weil multiplier for the lattice $L = \IZ$ with the symmetric bilinear form $B_m (x,y) = 2 m x y$.

\item $\Psi_{\! A_2}$: Weil multiplier for the $A_2$-lattice.

\item $r^{[E_6]}_{\mu} (n)$: the Fourier coefficients of the the theta function $\Theta^{[E_6]}_{\mu} (\t)$, given in \eqref{eq:E6_theta_function}.

\item $\square$: shorthand for the phrase ``a square (of an integer)", see the proof of Lemma \ref{lem:arithmetic_part_at_origin}.

\item $\s_{D,s} (f)$: arithmetic function defined in \eqref{eq:sigma_chiD_definition}.

\item $\vth_{m, \mu} (\t)$: Jacobi theta function, defined in \eqref{eq:vth_definition}.

\item $\TH_{m, \mu} (\t)$: combination of Jacobi theta functions defined in \eqref{eq:C_m_and_TH_m_mu_definition} for $m$ odd, square-free.

\item  $\Theta^{[E_6]}_{\mu} (\t)$: the theta function of the lattice $E_6$ and its cosets in the dual lattice for $\mu \in \IZ/3\IZ$.

\item $\xi_{d,m}$: factors that realize Atkin--Lehner operators, defined in \eqref{eq:atkin_lehner_definition}.

\item $\IZ_{m,\mu} := \IZ - \frac{\mu^2}{4m}$.

\item $\IZ_{\vc{m},\vc{\mu}} := \IZ - \frac{\mu_1^2}{4m_1} - \frac{\mu_2^2}{4m_2}$.

\item $\IZ^{>0}_{\vc{m},\vc{\mu}}$: the subset of $\IZ_{\vc{m},\vc{\mu}}$ with elements that are $>0$. Objects like \smash{$\IZ^{\geqslant 0}_{\vc{m},\vc{\mu}}$} are similarly defined.
\end{itemize}

\section{Preliminaries}\label{sec:preliminaries}
We first recall some preliminary facts that we use below.

\subsection{Analytic preliminaries}\label{sec:analytic_preliminaries}

We first recall Poisson summation in the generality we require (see e.g.~\cite[Proposition 5.1.29~(ii)]{BN}).

\begin{thm}\label{thm:poisson_summation}
Let $f: \IR \to \IC$ be in $L^1 (\IR)$, have bounded variation, and, for all $a \in \IR$, satisfy $f(a) = \frac{1}{2} \lp \lim_{x \to a^-} f(x) + \lim_{x \to a^+} f(x) \rp$. Then, with $\hat{f} (x) := \int_{\IR} f(y) e^{-2 \pi i x y} dy$,
\begin{equation*}
\sum_{k \in \IZ} f(k + \a) = \psum_{n \in \IZ} \hat{f} (n) e^{2 \pi i \a n}
\quad \mbox{for all } \a \in \IR.
\end{equation*}
Here $\sum^*$ means that we take the limit of symmetric partial sums.\footnote{
More specifically, $\displaystyle\psum_{n \in \IZ-\b} \ldots := \displaystyle\lim_{M \to \infty} \displaystyle\sum_{\substack{n \in \IZ-\b \\ |n| \leq M}} \ldots$ for any $\b \in \IR$.
}
\end{thm}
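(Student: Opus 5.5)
The idea is to reduce the statement to the classical Dirichlet--Jordan theorem for Fourier series of periodic functions of bounded variation, by periodizing $f$. Set
\begin{equation*}
F(\a) := \sum_{k \in \IZ} f(k+\a),
\end{equation*}
which is formally $1$-periodic.

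\emph{Step 1 (convergence of the periodization).} Since $f$ has bounded variation on $\IR$, the limits $\lim_{x\to\pm\infty} f(x)$ exist. Because $f\in L^1(\IR)$, both limits must vanish. For $k\in\IZ$ and any $y\in[k-1+\a,k+1+\a]$ we have
\begin{equation*}
|f(k+\a)| \leq |f(y)| + \mathrm{Var}_{[k-1+\a,\,k+1+\a]}(f).
\end{equation*}
Averaging this over $y$ in a unit subinterval gives
\begin{equation*}
|f(k+\a)| \leq \int_{k+\a}^{k+1+\a}|f(y)|\,dy + \mathrm{Var}_{[k-1+\a,\,k+1+\a]}(f).
\end{equation*}
Summing over $k$, the right-hand sides add up to at most $\|f\|_{L^1} + 2\,\mathrm{Var}_{\IR}(f)$. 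The same estimate applied to the tails $|k|>K$ shows they tend to $0$ uniformly in $\a\in[0,1]$. Hence the series defining $F$ converges absolutely and uniformly on compact sets.

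\emph{Step 2 (properties of $F$).} The variation of $F$ on $[0,1]$ is at most $\sum_k \mathrm{Var}_{[k,k+1]}(f) = \mathrm{Var}_{\IR}(f)<\infty$, so $F$ is of bounded variation on the circle. By uniform convergence we may interchange one-sided limits with the sum. Therefore the normalization $f(a)=\frac12\left(f(a^-)+f(a^+)\right)$ carries over to $F$, i.e. $F(\a) = \frac12\left(F(\a^-)+F(\a^+)\right)$ for all $\a$.

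\emph{Step 3 (Fourier coefficients).} The Fourier coefficients of $F$ are
\begin{equation*}
\int_0^1 F(\a)e^{-2\pi i n\a}\,d\a = \sum_{k\in\IZ}\int_k^{k+1} f(y)e^{-2\pi i n y}\,dy = \hat f(n).
\end{equation*}
The interchange of sum and integral is justified by Fubini--Tonelli, since $\sum_k \int_0^1 |f(k+\a)|\,d\a = \|f\|_{L^1} < \infty$.

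\emph{Step 4 (conclusion).} By the Dirichlet--Jordan theorem, the symmetric partial sums $\sum_{|n|\le M}\hat f(n)e^{2\pi i n\a}$ of the Fourier series of a periodic function of bounded variation converge at every point to $\frac12\left(F(\a^-)+F(\a^+)\right)$. By Step 2 this equals $F(\a)$, which is the claimed identity.

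The only substantial input is the Dirichlet--Jordan theorem, which I would quote rather than reprove. The step needing the most care is Step 1: it provides the uniform control that justifies both the termwise passage to one-sided limits in Step 2 and the application of Fubini in Step 3.
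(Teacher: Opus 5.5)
Your proof is correct. The paper does not prove this theorem; it quotes it from \cite[Proposition 5.1.29 (ii)]{BN}. Your argument is the standard proof of that result: periodize, show that $F$ has bounded variation on a period, inherits the midpoint normalization, and has Fourier coefficients $\hat f(n)$, then apply the Dirichlet--Jordan theorem to the symmetric partial sums.

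Two small remarks. First, the fact that $f(x)\to 0$ as $x\to\pm\infty$ is never used, since the averaging bound in Step 1 already gives absolute and uniform convergence. Second, that bound is exactly where you need bounded variation on all of $\IR$ rather than merely locally. An $L^1$ function built from ever narrower spikes of height $|k|$ at the integers is locally of bounded variation, yet its periodization diverges.
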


We also require a version of the Phragm\'en--Lindel\"of principle given by Rademacher~\cite{Rad}.

\begin{thm}[\cite{Rad}, Theorem 2]\label{thm:phragmen_lindelof}
Let $a < b$ and $f$ be a holomorphic function in the open strip $a < \re (s) < b$, extending continuously to its closure. Suppose there are constants $A,B > 0$ with
\begin{equation*}
|f(s)| \leq A  e^{|\im (s)|^B} \quad \mbox{for } a \leq \re (s) \leq b .
\end{equation*}
Also assume that there are real constants $M_a, M_b > 0$, $Q > -a$, and $\a \geq \b$ such that
\begin{equation*}
|f(a+it)| \leq M_a |Q+a+ it|^\a \andd
|f(b+it)| \leq M_b |Q+b+ it|^\b.
\end{equation*}
Then, with $\ell (x) := \frac{b-x}{b-a}$, we have
\begin{equation*}
|f(s)| \leq M_a^{\ell (\re (s))} M_b^{1-\ell (\re (s))}
|Q+s|^{\a \ell (\re (s)) + \b (1-\ell (\re (s)))} 
\quad \mbox{for } a \leq \re (s) \leq b.
\end{equation*}
\end{thm}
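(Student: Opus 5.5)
The plan is to reduce to the classical three-lines form of the Phragmén--Lindelöf principle by dividing $f$ by a holomorphic comparison function whose modulus matches, at least approximately, the claimed bound. Put $\ell(s) := \frac{b-s}{b-a}$, which is holomorphic in $s$ and agrees with $\ell(\re(s))$ in real part. Put $w(s) := \a\ell(s)+\b(1-\ell(s))$ and $c := \frac{\a-\b}{b-a}\geq 0$. Since $Q>-a$, the quantity $Q+s$ stays in the right half-plane on the closed strip, so $\log(Q+s)$ is holomorphic there. Write $Q+s = x+it$ with $x=Q+\re(s)>0$, $r=|Q+s|$ and $\th=\arg(Q+s)\in(-\frac{\pi}{2},\frac{\pi}{2})$. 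A direct computation gives
\begin{equation*}
\left|M_a^{\ell(s)}M_b^{1-\ell(s)}(Q+s)^{w(s)}\right| = M_a^{\ell(\re(s))}M_b^{1-\ell(\re(s))}\, r^{\re(w(s))}\, e^{c\,t\th}.
\end{equation*}
The main obstacle is the parasitic factor $e^{ct\th}$. It is not identically $1$, and it cannot be removed exactly by a holomorphic factor.

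To control it I use the asymptotic $t\th = \frac{\pi}{2}|t| - x + O\!\left(\frac{x^2}{|t|}\right)$ as $|t|\to\infty$, which holds uniformly for $x$ in the compact interval $[Q+a,Q+b]$. Hence on the half-strip $t\geq 0$ the product $e^{ct\th}\,|e^{ic\pi s/2}|\,|e^{cs}|$ is bounded above and below by positive constants depending only on $a,b,Q,\a,\b$. The same holds on $t\leq 0$ after replacing $e^{ic\pi s/2}$ by $e^{-ic\pi s/2}$. Accordingly, on the upper half-strip I set
\begin{equation*}
g(s) := \frac{f(s)\, e^{ic\pi s/2+cs}}{M_a^{\ell(s)}M_b^{1-\ell(s)}(Q+s)^{w(s)}},
\end{equation*}
and define $g$ analogously on the lower half-strip. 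On the two vertical boundary lines the hypotheses then give $|g|\leq K$ for a constant $K$ independent of $f$. On the real segment $t=0$, $|g|$ is bounded by continuity. The growth hypothesis $|f|\leq Ae^{|\im(s)|^B}$ implies that $g$ is of finite order in the half-strip, which is exactly the admissible growth for Phragmén--Lindelöf. I would therefore apply the maximum principle to $g(s)e^{\e \cos(\g(s-\frac{a+b}{2})) }$-type damping, or more simply to $g(s)e^{\e (s-s_0)^{2N}}$ with suitable $N$, and let $\e\to 0$. This should give $|g|\leq K'$ on the whole closed strip, so that $|f(s)|\leq K'\cdot(\text{claimed bound})$.

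Finally, I remove the constant $K'$ by the standard power trick. Apply the argument to $f^N$, which satisfies the hypotheses with data $M_a^N$, $M_b^N$, $N\a$, $N\b$, $A^N$ and exponent $B$ (the last possibly after enlarging $B$ slightly). The constants produced above grow at most like $K_0^N$ times something subexponential in $N$. Taking $N$-th roots then yields $|f(s)|\leq K_0\cdot(\text{bound})$, so the $N$-independent loss must still be eliminated. The one delicate point, which I would check carefully, is that the error term in $t\th$ is only $O(1/|t|)$. My plan is to choose the normalisation so that the boundary constant tends to $1$ as $|t|\to\infty$, and to treat the compact region $|t|\leq T$ separately: there, after dividing by the exact real-part bound, continuity and the power trick give a loss that vanishes as $N\to\infty$. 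I expect this precise bookkeeping of the argument factor $e^{ct\th}$ to be the only real difficulty; everything else is the routine three-lines argument.
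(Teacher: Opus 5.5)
The paper gives no proof of this statement; it is quoted from Rademacher. Your argument therefore has to stand on its own, and it has a genuine gap exactly at the step you flag.

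Dividing by a holomorphic $\Phi$ with $|\Phi|\asymp M_a^{\ell(\s)}M_b^{1-\ell(\s)}|Q+s|^{\re(w(s))}$ can only give $|f|\le K\cdot(\text{bound})$, and the power trick cannot remove $K$. Passing to $f^N$ replaces $c$ by $Nc$, so the discrepancy factor $e^{ct\th}\,|e^{ic\pi s/2+cs}|$, and with it $K$, is raised to the $N$-th power; taking $N$-th roots returns the same loss. The half-strip split makes this worse. On the bottom edge $[a,b]$, the only bound for $|g|$ is $\sup_{a\le\s\le b}|f(\s)|/(\text{bound})$, which is the very quantity you are trying to control, and it also scales like an $N$-th power. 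Your treatment of $|t|\le T$ ``by continuity and the power trick'' has nothing to run on: $|f|/(\text{bound})$ satisfies no maximum principle unless you know something about $\log(\text{bound})$. (Also, as written, the factor $e^{ic\pi s/2+cs}$ in $g$ belongs in the denominator, not the numerator, if you want $|g|\asymp|f|/(\text{bound})$.)

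A useful diagnostic is that nothing in your argument uses $\a\ge\b$; it runs verbatim for $c<0$, yet the sharp conclusion is false when $\a<\b$. The missing idea is this. Put $L(s):=\ell(\s)\log M_a+(1-\ell(\s))\log M_b+\re(w(s))\log|Q+s|$. Since $\log|Q+s|$ is harmonic and $\re(w(s))$ is linear in $\s$,
\begin{equation*}
\Delta L = 2\,\frac{\partial\,\re(w(s))}{\partial\s}\,\frac{\partial\log|Q+s|}{\partial\s} = -2c\,\frac{Q+\s}{|Q+s|^{2}} \le 0,
\end{equation*}
so $L$ is superharmonic precisely when $\a\ge\b$.

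To see that the theorem fails for $\a<\b$: then $L$ is strictly subharmonic, so the harmonic function $u$ with the same boundary values as $L$ exceeds $L$ inside. Taking $f=e^{G}$ with $\re(G)=u$ satisfies all hypotheses but violates the conclusion.

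For $\a\ge\b$, the function $\log|f|-L$ is subharmonic in the open strip, upper semicontinuous on its closure, $\le 0$ on both lines, and $\le|t|^B+O(\log(2+|t|))$. Apply the maximum principle on rectangles $[a,b]\times[-R,R]$ to $\log|f|-L-\e\cos(\g(\s-\frac{a+b}{2}))\cosh(\g t)$ with $0<\g<\frac{\pi}{b-a}$. Letting $R\to\infty$ and then $\e\to0$ gives $\log|f|\le L$, which is the theorem with no constant. The parasitic factor $e^{ct\th}$ never arises, because you never need a holomorphic function whose modulus equals the bound.
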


\subsection{Weil multiplier systems}\label{sec:Weil_multiplier_review}

Next we review Weil representations (see e.g.~\cite{Bru2, Shin2} for details). We specialize to the one-dimensional lattice $L = \IZ$ with quadratic form $Q_m(x) = m x^2$ for $m \in \IN$. The corresponding symmetric bilinear form is $B_m (x,y) = 2 m x y$.
We use $\mu \in \IZ/2m\IZ$ to denote the element $\frac{\mu}{2m} \in L^*/L$ and write 
$\Psi_m (M)_{\mu, \nu}$ for the $(\mu, \nu)$-th entry of the Weil multiplier matrix  $\Psi_m (M)$, where $M \in \SL_2 (\IZ)$.
It satisfies the following properties:

\begin{enumerate}[label=(\arabic*),ref=(\arabic*), wide, labelindent=0pt]

\item\label{item:weil_T_S_explicit} For $T = \pmat{1 & 1 \\ 0 & 1}$ and $S = \pmat{0 & -1 \\ 1 & 0}$, we have the following, with $\mu, \nu \in \IZ/2m\IZ$,
\begin{equation}\label{eq:weil_T_S_explicit}
\Psi_m (T)_{\mu, \nu} = e^{2\pi i \frac{\mu^2}{4m}} \d_{\mu, \nu}
\andd
\Psi_m (S)_{\mu, \nu} = \frac{e^{-\frac{\pi i}{4}}}{\sqrt{2m}}
e^{- 2 \pi i \frac{\mu \nu}{2m}}.
\end{equation}
Here $\d_{\mu, \nu}:=1$ if $\mu=\nu$ (as elements of $\IZ/2m\IZ$) and zero otherwise.

\item\label{item:weil_projective_representation} For $M_1 = \pmat{a_1 & b_1 \\ c_1 & d_1}$, $M_2 = \pmat{a_2 & b_2 \\ c_2 & d_3}$, and $M_3 = M_1 M_2 =: \pmat{a_3 & b_3 \\ c_3 & d_3}\in\SL_2(\IZ)$, we have\footnote{
In particular, $\Psi_m$ (like any Weil multiplier for an odd-rank lattice) forms a projective representation of $\SL_2 (\IZ)$. Alternatively, one may extend $\Psi_m$ to the metaplectic double cover of $\SL_2 (\IZ)$ and consider ordinary representations.
}
\begin{equation}\label{eq:weil_projective_representation}
\sqrt{c_1 M_2 \t +d_1}
\sqrt{ c_2 \t +d_2 }
\sum_{\g \pmod{2m}} \Psi_m (M_1)_{\mu, \g} \Psi_m (M_2)_{\g, \nu}
=
\sqrt{c_3 \t +d_3} \Psi_m (M_3)_{\mu, \nu},
\end{equation}
where $M\t= \frac{a \t +b}{c \t +d}$ for $M = \pmat{a & b \\ c & d} \in \SL_2 (\IZ)$.

\item\label{item:minus_infinity_weil} We have
\begin{equation}\label{eq:minus_infinity_weil}
\Psi_m (-I)_{\mu,\nu} = -i \d_{\mu,-\nu} .
\end{equation}

\item\label{item:Weil_multiplier_unitarity} The representation $\Psi_m$ is unitary (here and throughout $*$ denotes the complex conjugate)
\begin{equation*}
\sum_{\g \pmod{2m}} \Psi_m^*(M)_{\mu, \g} \Psi_m (M)_{\nu, \g}
= \d_{\mu, \nu} 
\quad \mbox{for any }
M  \in \SL_2 (\IZ) .
\end{equation*}
\end{enumerate}

\begin{rem}\label{rem:psi_zero_mu_lower_row_determining}
Note that \eqref{eq:weil_T_S_explicit} and \eqref{eq:weil_projective_representation}  imply that $\Psi_m (TM)_{\mu, \nu} = e^{2 \pi i \frac{\mu^2}{4m}} \Psi_m (M)_{\mu,\nu}$. Thus we~can specify the modular matrix in the factor $\Psi_m (M_{c,d})_{0,\mu}$ of the Eisenstein series \eqref{eq:depth_one_eisenstein} with its lower row~$(c,d)$. The same applies to the coupled Eisenstein series \eqref{eq:coupled_Eisenstein_definition}. Also note that by \eqref{eq:weil_projective_representation},~\eqref{eq:minus_infinity_weil},
\begin{equation}\label{eq:weil_multiplier_minus_cd_property}
\Psi_m (M_{c,d})_{0, \mu} = \Psi_m (M_{c,d})_{0, -\mu}
\andd
\frac{\Psi_m (M_{c,d})_{0, \mu}}{\lp c \t + d \rp^{\frac{3}{2}}}
= \frac{\Psi_m (M_{-c,-d})_{0, \mu}}{\lp - c \t - d \rp^{\frac{3}{2}}}.
\end{equation}
By \eqref{eq:depth_one_eisenstein}, this implies $\wh{f}_{m,\mu} (\t, \bar{\tau}; s)  = \wh{f}_{m,-\mu} (\t, \bar{\tau}; s)$.
\end{rem}

Below we also encounter $\Psi_{\! A_2} (M)_{\mu,\nu}$, the Weil multiplier for the $A_2$-lattice ($\mu, \nu \in \IZ/3\IZ \cong \! A_2^*/A_2$). This forms an ordinary representation of $\SL_2 (\IZ)$, whose values on generators are
\begin{equation*}
\Psi_{\! A_2} (T)_{\mu,\nu} = e^{2\pi i \frac{\mu^2}{3}} \d_{\mu,\nu}
\andd
\Psi_{\! A_2} (S)_{\mu,\nu} = -\frac{i}{\sqrt{3}} e^{2 \pi i \frac{\mu \nu}{3}}.
\end{equation*}
Note the decomposition of $\Psi_3$ to  $\Psi_{\! A_2}$ and dual of the Weil multiplier for the $A_1$-lattice $\Psi_1^*$ as
\begin{equation}\label{eq:Weil_three_decomposition}
\Psi_{3}(M)_{2 \mu + 3 \a, 2 \nu + 3 \b} 
=
\Psi^*_{1}(M)_{\a,\b} \, \Psi_{\! A_2} (M)_{\mu,\nu}
\where
\a,\b \in \IZ/2\IZ \mbox{ and } \mu,\nu \in \IZ/3\IZ.
\end{equation}

\subsection{Atkin--Lehner operators}\label{sec:atkin_lehner_operators}

We finally review Atkin--Lehner operators introduced by Eichler and Zagier in \cite{EZ} (see \cite{DMZ} for the terminology). Restricting to $m \in \IN$ odd, square-free, note that for each $d \mid m$, there is a unique $\xi_{d,m}$ modulo $2m$ (with $d \mapsto \xi_{d,m}$ one-to-one) such that
\begin{equation}\label{eq:atkin_lehner_definition}
\xi_{d,m} \equiv 1 \pmod{\frac{2m}{d}} 
\andd
\xi_{d,m} \equiv -1 \pmod{2 d} .
\end{equation}
Also note that $\xi_{d,m}^2 \equiv 1 \pmod{4m}$ and in particular $\xi_{d,m}$ are all invertible modulo $2m$. These facts together with \eqref{eq:weil_T_S_explicit} and \eqref{eq:weil_projective_representation} imply that  $\Psi_m (M)_{\mu, \nu} = \Psi_m (M)_{\xi_{d,m} \mu, \xi_{d,m} \nu}$ for $M \in \SL_2 (\IZ)$ and~$d \mid m$.
Consequently, if $(h_\mu)_{\mu \in \IZ/2m\IZ}$ transforms like a vector-valued modular form under $\SL_2 (\IZ)$ with some half-integral weight and multiplier $\Psi_m$ (or $\Psi_m^*$), then so does the vector $(h_{\xi_{d,m} \mu})_{\mu \in \IZ/2m\IZ}$.

For example, the combination \smash{$(\TH_{m, \mu})_{\mu \in \IZ/2m\IZ}$} defined in \eqref{eq:C_m_and_TH_m_mu_definition} for odd, square-free $m$, forms a weight $\frac{1}{2}$ holomorphic modular form with multiplier $\Psi_m$ (just like the functions $\vth_{m, \mu}$). Moreover, it is an eigenfunction for all Atkin--Lehner operators with eigenvalue $1$.
Also, note that because $\xi_{1,m} \equiv 1 \pmod{2m}$ and $\xi_{m,m} \equiv -1 \pmod{2m}$ along with $\vth_{m, \mu} = \vth_{m, -\mu}$, we have 
\begin{equation*}%\label{eq:TH_vth_comparison}
\TH_{1, \mu} = \vth_{1, \mu} 
\andd
\TH_{p, \mu} = 2 \vth_{p, \mu} 
\quad \mbox{for any odd prime } p.
\end{equation*}

\section{Weight~$\frac{3}{2}$ Eisenstein Series}\label{sec:weight_3_2_eisenstein}

We next review Zagier's \cite{Zag} Eisenstein series $\wh{f}_{m,\mu} (\t, \bar{\tau}; s)$, defined in \eqref{eq:depth_one_eisenstein} 
for $m \in \IN$, $\mu \in \IZ/2m\IZ$, $\t \in \IH$, and $s \in \IC$ with $\re(s) \!>\! \frac{1}{4}$.
We sketch the involved details of its continuation~for the reader's convenience and to fix notation for the vector-valued setting (see also~\cite{BK, GZ, HZ}).\footnote{See also the SAGE package WeilRep \cite{Wil3} and details in \cite{Wil}.}

Note that \eqref{eq:depth_one_eisenstein} is absolutely and locally uniformly convergent in $\tau$ and $s$ in its domain of definition. Thus, it yields a holomorphic function in $s$ for $\re(s) > \frac{1}{4}$ and there satisfies 
\begin{equation}\label{eq:hatf_modular_transformation}
\wh{f}_{m,\mu} \lp \frac{a\t+b}{c \t+d}, \frac{a\bar{\t}+b}{c\bar{\t}+d}; s \rp
=
(c \t + d)^{\frac{3}{2}} \sum_{\nu \in \IZ/2m\IZ} 
\Psi^*_{m}(M)_{\mu,\nu} \,
\wh{f}_{m,\nu} \lp \t, \bar{\tau}; s \rp 
\end{equation}
for all $M = \pmat{a & b \\ c & d} \in \SL_2(\IZ)$.
One way to prove the analytic continuation of $\wh{f}_{m,\mu} (\t, \bar{\tau}; s)$ (and discuss the resulting modular object at $s=0$) is through its Fourier expansion. We next recall the properties of the ingredients that appear in this expansion.

\subsection{The analytic part}\label{sec:analytic_part_dim1_1}
One of the ingredients in the Fourier expansion of $\wh{f}_{m,\mu} (\t, \bar{\tau}; s)$ is its \textit{analytic part}
(for $\k \in \frac{1}{2} \IZ$, $s \in \IC$ with $2 \re (s) + \k > 1$, $t \in \IR$, $v \in \IR^+$):
\begin{equation}\label{eq:Ik_definition}
\calI_\k(v,t;s) := v^{1-\k-s} e^{2 \pi t v}
\int_{-\infty}^\infty \frac{e^{-2\pi i t v x}}{(1-ix)^\k \lp 1+x^2\rp^s} dx.
\end{equation}
We let $\calI(v,t;s) := \calI_{\frac{3}{2}}(v,t;s)$.
This yields a continuous function of $(v,t,s)$ that is holomorphic in $s$ in its region of definition.
We now record the properties of $\calI_\k(v,t;s)$.\footnote{It is known that $\calI_\k(v,t;s)$ can be expressed in terms of Whittaker functions (see e.g.~\cite{Maass}~ by Maass). The properties of $\calI_\k(v,t;s)$ along with its continuation to the complex plane can then be studied from this angle. For our goals, the integral representation in \eqref{eq:Ik_definition} and its generalizations below suffice, so we restrict our discussion~accordingly.}

\begin{lem}\label{lem:analytic_part_calI_properties}
Let $0 < \e < \frac{1}{8}$ be fixed, $\k \in \frac{1}{2} \IN$ with $\k \geq \frac{3}{2}$, $t\in\IR$, and $v\in\IR^+$.
\begin{enumerate}[leftmargin=*]
\item[\rm(1)] For $s \in \IC$ with $\frac{1-\k}{2} + \e \leq \re (s) \leq \frac{1}{\e}$ and $|\im (s)| \leq \frac{1}{\e}$, we have
\begin{equation*}
\calI_\k (v,t;s)  v^{s+\k -1} e^{-2\pi t v} \ll_{\k, \e} e^{- \pi |t| v} .
\end{equation*}

\item[\rm(2)] We have
\begin{equation*}
\calI_\k (v,t;0) =
\begin{cases}
\frac{(2\pi)^\k}{\GG (\k)} t^{\k - 1} \quad &\mbox{if } t>0, \\
0 \quad &\mbox{if } t \leq 0.
\end{cases}
\end{equation*}

\item[\rm(3)] For $t \leq 0$ and $s \in \IC$ with $\frac{1-\k}{2} + \e \leq \re (s) \leq \frac{1}{\e}$ and $|\im (s)| \leq \frac{1}{\e}$ we have\footnote{Here and in similar settings, we denote the derivative in $s$ as $\calI'_\k (v,t;s) := \frac{\del}{\del s} \calI_\k (v,t;s)$.}
\begin{equation*}
\frac{1}{s} \calI_\k (v,t;s)  v^{s+\k -1} e^{-2\pi t v} \ll_{\k, \e} e^{- \pi |t| v} 
\ \ \mbox{and} \ \ 
\calI'_\k (v,t;0) = 2 \pi v^{1-\k} 
\int_2^\infty \frac{e^{2 \pi t v x}}{x^\k} dx.
\end{equation*}
\end{enumerate}
\end{lem}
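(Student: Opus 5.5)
The plan is to deform the contour of integration in \eqref{eq:Ik_definition} and read off all three statements from the resulting representation. Write $I(x) := (1-ix)^{-\k}(1+x^2)^{-s} = (1-ix)^{-\k-s}(1+ix)^{-s}$, so $\calI_\k(v,t;s) = v^{1-\k-s}e^{2\pi tv}\int_\IR e^{-2\pi i tvx} I(x)\,dx$. The integrand has branch points only at $x=-i$ (from $1-ix$) and $x=i$ (from $1+ix$). For $t>0$ the factor $e^{-2\pi i tvx}$ decays in the lower half-plane, so I would shift the contour to $x = y - i c$ with $0<c<1$. For $t<0$ it decays in the upper half-plane, and I would shift to $x = y + ic$. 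Choosing $c=\frac12$ gives an extra factor $e^{-\pi |t| v}$ times $\int_\IR |I(y\mp \frac i2)|\,dy$. For $\re(s) \geq \frac{1-\k}{2}+\e$ this integral is bounded uniformly in the stated compact $s$-range, because $|I| \asymp (1+|y|)^{-\k-2\re(s)}$ and $\k+2\re(s)\geq 1+2\e$. Since $v^{s+\k-1}e^{-2\pi tv}\calI_\k = \int e^{-2\pi itvx}I(x)\,dx$, this gives (1) for $t\neq0$. For $t=0$ the bound is just absolute convergence. Before shifting, I first justify the analytic continuation of the integral in $s$ to $\re(s)>\frac{1-\k}{2}$, which follows from the same decay. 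The shift is justified by the decay of $I$ on vertical segments.

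For (2), set $s=0$, so $I(x)=(1-ix)^{-\k}$ is holomorphic in the upper half-plane. For $t\leq 0$ (with $t=0$ handled by a limit, or directly since $\k>1$), close the contour upward to get $0$. For $t>0$, substitute $w = 1-ix$ so the integral becomes the standard Hankel-type integral $\frac{1}{i}\int_{1-i\infty}^{1+i\infty} e^{2\pi tv(w-1)} w^{-\k}\,dw\cdot(-1)$. Here I use the known formula $\frac{1}{2\pi i}\int_{c-i\infty}^{c+i\infty} e^{aw}w^{-\k}\,dw = \frac{a^{\k-1}}{\GG(\k)}$ with $a=2\pi tv$. The factor $e^{-2\pi tv}$ cancels $e^{2\pi tv}$, and collecting the powers $v^{1-\k}(2\pi t v)^{\k-1}\cdot 2\pi$ gives $\frac{(2\pi)^\k}{\GG(\k)}t^{\k-1}$.

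For (3), take $t\leq 0$. The function $s\mapsto \int e^{-2\pi i tvx}I(x)\,dx$ is holomorphic and vanishes at $s=0$ by (2), so dividing by $s$ is harmless. To get the bound uniformly, I would write the quotient as $\frac{1}{s}(F(s)-F(0))=\int_0^1 F'(\theta s)\,d\theta$. Then I bound $F'$ exactly as in (1): differentiating brings down $-\log(1+x^2)$, which costs only a logarithm, and there is the contour shift upward for $t<0$. The uniformity near $s=0$ inside the region uses a slightly smaller $\e$, or a Cauchy estimate on a slightly larger compact region.

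For the formula for $\calI'_\k(v,t;0)$, differentiate at $s=0$. Since $\calI_\k(v,t;0)=0$, only $v^{1-\k}e^{2\pi tv}\int e^{-2\pi itvx}(1-ix)^{-\k}(-\log(1+x^2))\,dx$ survives. Split $\log(1+x^2)=\log(1-ix)+\log(1+ix)$. The $\log(1-ix)$ piece is holomorphic above and vanishes when the contour is closed upward. For the $\log(1+ix)$ piece, collapse the upward contour onto the branch cut $x=iy$, $y\geq1$. The jump of $\log(1+ix)$ across the cut is $2\pi i$, which gives $2\pi\int_1^\infty e^{2\pi tvy}(1+y)^{-\k}\,dy$ up to orientation and sign. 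The substitution $x=1+y$ turns this into $e^{-2\pi tv}\cdot 2\pi\int_2^\infty e^{2\pi tvx}x^{-\k}\,dx$, and the prefactor $e^{2\pi tv}$ cancels. The main obstacle is bookkeeping the signs and orientation in this branch-cut evaluation, and justifying the collapse near $y=1$, where the integrand is only logarithmically singular and hence integrable.
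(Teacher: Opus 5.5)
Your proposal is correct. For (1) and (2) it is the paper's argument: the same shift by $\mp\frac{i}{2}$, the upward closing for $t\le 0$, and for $t>0$ your Bromwich integral is just the Hankel representation of $1/\GG(\k)$ that the paper uses after wrapping the contour around the cut from $-i$ to $-i\infty$. There is one small slip: with $x=i(w-1)$ and $dx=i\,dw$, the real line becomes $\frac{1}{i}\int_{1-i\infty}^{1+i\infty}$ without the extra factor $(-1)$, but your final value is right.

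Part (3) is where you genuinely differ from the paper.

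\emph{The paper's route.} For $|s|\le\frac18$ the paper collapses the contour onto the cut $[i,i\infty)$ and gets the exact formula \eqref{eq:Ik_expression_D_integration_path}, $\calI_\k(v,t;s)=2\sin(\pi s)\,v^{1-\k-s}e^{4\pi tv}\int_0^\infty e^{2\pi tvy}(2+y)^{-\k-s}y^{-s}\,dy$. There the zero at $s=0$ is explicit, so both the bound on $\calI_\k/s$ and the value of $\calI'_\k(v,t;0)$ are read off directly; the range $|s|>\frac18$ is covered by (1).

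\emph{Your route.} You write $F(s)/s=\int_0^1F'(\theta s)\,d\theta$, using $F(0)=0$ from (2), and bound $F'$ by the strip shift of (1). Since the $s$-region is convex and contains $0$, you need neither a smaller $\e$ nor Cauchy estimates. You then obtain $\calI'_\k(v,t;0)$ by differentiating under the integral and collapsing onto the cut only at $s=0$, where the only discontinuity is the jump $2\pi i$ of $\log(1+ix)$. The signs you were worried about do work out: that piece equals $-2\pi\int_1^\infty e^{2\pi tvy}(1+y)^{-\k}\,dy$, and together with the overall minus sign this gives the stated positive formula.

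\emph{Trade-off.} Your route avoids the case split in $|s|$, and it avoids handling the $y^{-s}$ endpoint singularity and the boundary values $e^{\pm\pi i s}$ for general $s$. The paper's route gives an explicit representation on a whole neighbourhood of $s=0$, which it reuses in the proof of Proposition~\ref{prop:analytic_part_calK_properties_negative}. There the path $\CD$ underlies the monodromy relation \eqref{eq:calY_monodromy} and the evaluation of the $\calI_{\k_2}$-term.
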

\begin{proof}
Part (1) follows by shifting the integration path by $-\frac{i}{2}$ if $t > 0$ and by $\frac{i}{2}$ if $t \leq 0$ (away from the branch points at $\pm i$).
For (2), note that the integrand in \eqref{eq:Ik_definition} has only one branch point at $-i$ for $s=0$. So for $t \leq 0$, we deform the path upwards to find $\calI_\k(v,t;0) = 0$. For~$t >0$, we similarly deform the path downwards to surround the branch cut from $-i$ to $- i \infty$ in clockwise orientation. The result then follows from the fact that \smash{$\frac{2 \pi i}{\Gamma (\k)} = \int_{\mathcal{C}} e^z z^{-\k} dz$}, where $\mathcal{C}$ is the Hankel-type contour starting at $-\infty$ below the real line, circling around the origin, and then going back to $- \infty$ above the real line (see e.g.~\cite[(5.9.2)]{NIST}).

Finally, for (3), away from $s=0$, say $|s| > \frac{1}{8}$, the claimed bound follows from~(1). So assume $|s| \leq \frac{1}{8}$ from now on, which is contained in the region assumed by the lemma.
As $t \leq 0$, we deform the integration path in \eqref{eq:Ik_definition} upwards to surround the branch-cut from $i$ to $i \infty$.

\begin{figure}[h!]
\vspace{-10pt}
\centering
\includegraphics[scale=0.33]{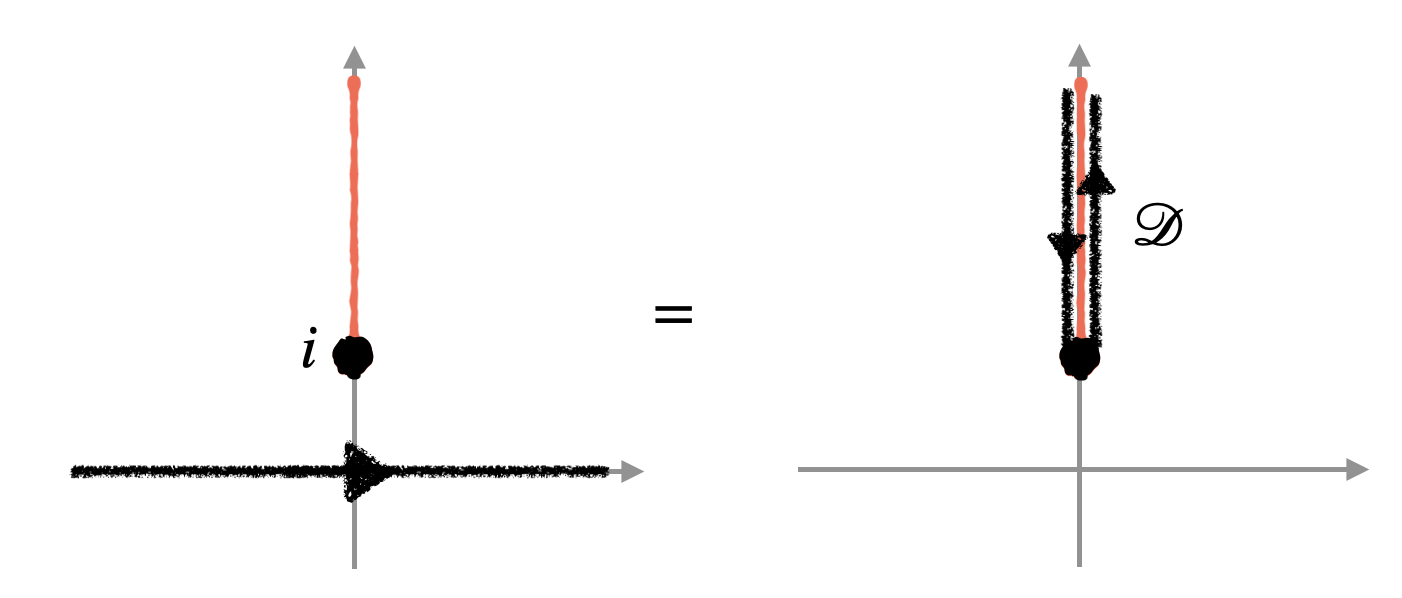}
\vspace{-20pt}
\caption{}
\label{fig:main_int_path_D}
\vspace{-10pt}
\end{figure}

\noindent Moreover, because $|s| \leq \frac{1}{8}$, the growth of the integrand around $z=i$ is at most like $|1+iz|^{-\frac{1}{8}}$ and the integrals can be taken right on the ray from $i$ to $i \infty$.
This yields (see Figure~\ref{fig:main_int_path_D} for $\CD$)
\begin{equation}\label{eq:Ik_expression_D_integration_path}
\calI_\k(v,t;s) 
\! = \!
v^{1-\k-s} e^{2 \pi t v}\!\!\!
\int_{\CD}\!\frac{e^{-2\pi i t v w}}{(1-iw)^{\k+s} (1+iw)^s} dw 
\! = \!
2 \sin (\pi s) v^{1-\k-s} e^{4 \pi t v} \!\!\int_0^\infty \!\! \frac{e^{2\pi t v y}}{(2+y)^{\k+s} y^s}  d y .
\end{equation}
Both the claimed bound on $\frac{1}{s} \calI_\k (v,t;s)$ and the value of $\calI'_\k (v,t;0)$ follows from this expression.
\end{proof}

\subsection{The arithmetic part}
The second ingredient in the Fourier expansion of $\wh{f}_{m,\mu} (\t, \bar{\tau}; s)$ is its \textit{arithmetic part}, defined as the Dirichlet series
\begin{equation}\label{eq:arithemtic_part_defn}
\calL_{m,\mu} (n;s):= 
e^{\frac{\pi i}{4}}
\sum_{c\geq 1} c^{-2s-\frac{3}{2}} \sum_{d \pmod{c}^*}  
\Psi_{m} (M_{c,d})_{0,\mu} e^{2 \pi i n \frac{d}{c}} 
\ \ \  \mbox{for }
m \in \IN, \, \mu \in \IZ/2m\IZ, \, n \in \IZ_{m,\mu} ,
\end{equation}
which converges absolutely in $(c,d)$ for $\re (s) > \frac{1}{4}$ and is holomorphic there.
Here the sum over $d \pmod{c}^*$ is over residue classes modulo $c$ with $\gcd (c,d) = 1$.
In this section and the associated Appendix~\ref{sec:solving_quadratic_equations}, we review its properties (giving details on the results of \cite{BK, EZ, HZ, Maass2, Zag2} adapted to our setting).
We first note that a standard calculation (see e.g.~\cite{BK}) yields 
\begin{equation}\label{eq:arithmetic_part_rewriting_with_qaudratic_equation}
\calL_{m,\mu} (n;s)= 
\frac{1}{\sqrt{2m}}
\sum_{c\geq 1} c^{-2s-1} \sum_{k \mid c} 
\mu \lp \frac{c}{k} \rp N_{m,\mu} (n, k) 
\quad 
\mbox{for } \re (s) > \frac{1}{4},
\end{equation}
where $\mu(\cdot)$ denotes the M\"obius function and
\begin{equation}\label{eq:number_congruence_solutions}
N_{m,\mu} (n, k) := \left| \left\{\nu \pmod{k} : \ 
m \nu^2 - \mu \nu + n + \frac{\mu^2}{4m}  \equiv 0 \pmod{k} \right\} \right|.
\end{equation}
One can evaluate $N_{m,\mu} (n, k)$ with elementary tools. We give the result for $m \in \IN$ with our notation in Appendix~\ref{sec:solving_quadratic_equations}.
From now on, we restrict to $m$ odd, square-free for technical simplicity. We also~let
\begin{equation}\label{eq:sigma_chiD_definition}
\s_{D,s} (f) := 
\sum_{\substack{a,c > 0 \\ ac \mid f}} \frac{\mu (a) \chi_D (a) }{a^{s} c^{2s-1}} ,
\end{equation}
with $D \!\in\! \IZ$ a fundamental discriminant, $\chi_D (a) := ( \frac{D}{a} )$, $f \in \IN$, and $s \in \IC$.
This defines a multiplicative function of $f$.
We can use this to evaluate the Bell series associated with $N_{m,\mu} (n,\cdot)$.

\begin{lem}\label{lem:bell_series_quadratic}
Let $m \in \IN$ be odd, square-free, $\mu \in \IZ/2m\IZ$, $n \in \IZ_{m,\mu}$ with $n \neq 0$, $p$ prime, and~$s \in \IC$ with $\re (s) > 0$. Writing $-4mn = D f^2$, where $D \in \IZ$ is a fundamental discriminant and~$f \in \IN$ satisfying $p^\l \parallel f$ with $\l \in \IN_0$, we have
\begin{equation*}
\sum_{r\geq 0} \frac{N_{m,\mu} (n,p^r)}{p^{rs}}
= 
\frac{1}{1-\chi_D (p) p^{-s}} 
\begin{cases}
\lp 1+p^{-s} \rp \s_{D,s} \lp p^\l \rp
\quad &\mbox{if } p \nmid m, \\[1ex]
\s_{D,s} \lp p^\l \rp +\d_{\l \geq 1}  p^{-(s-1)} \s_{D,s} \lp p^{\l-1} \rp
\quad &\mbox{if } p \mid m .
\end{cases}
\end{equation*}
\end{lem}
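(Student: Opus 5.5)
The plan is to compute the local factor directly, counting solutions of the quadratic congruence modulo prime powers. Set $a := m$, $b := -\mu$, $c_0 := n + \frac{\mu^2}{4m}$. Here $c_0 \in \IZ$ because $n \in \IZ_{m,\mu}$, and the discriminant of $m\nu^2 - \mu\nu + c_0$ is $\mu^2 - 4mc_0 = -4mn = Df^2$. The two cases $p \nmid m$ and $p \mid m$ behave differently, so I treat them separately.

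\emph{Case $p \nmid m$.} Then $p$ is odd, since $m$ is odd, although $p=2$ is allowed when $p \nmid m$; I return to $p=2$ below. For odd $p$ the leading coefficient $m$ is a unit modulo $p^r$. Completing the square gives
\[
4m(m\nu^2 - \mu\nu + c_0) = (2m\nu - \mu)^2 + 4mn ,
\]
so $N_{m,\mu}(n,p^r)$ equals the number of $x \pmod{p^r}$ with $x^2 \equiv Df^2 \pmod{p^r}$. This is the classical count. With $p^\l \parallel f$, the solutions for $r \le 2\l$ are those with $p^{\lceil r/2\rceil} \mid x$. For $r > 2\l$, write $x = p^\l y$; the count is $p^\l$ times the number of $y$ modulo $p^{r-\l}$ with $y^2 \equiv D \pmod{p^{r-2\l}}$, which equals $p^\l(1+\chi_D(p))$ for $p\nmid D$, and is handled similarly for $p \mid D$ (where $p \parallel D$). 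Summing the resulting geometric series in $p^{-s}$ and comparing with the finite sum $\s_{D,s}(p^\l) = \sum_{j \le \l}\sum_{i\le 1,\, i+j\le \l} \mu(p^i)\chi_D(p^i)p^{-is}p^{-j(2s-1)}$ should give exactly $\frac{(1+p^{-s})\s_{D,s}(p^\l)}{1-\chi_D(p)p^{-s}}$.

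For $p=2$ the count changes: $m$ is odd, so I would instead count $\nu \pmod{2^r}$ directly. Equivalently, count $x = 2m\nu-\mu$ of fixed parity (that of $\mu$) modulo $2^{r+1}$ with $x^2 \equiv Df^2 \pmod{2^{r+2}}$, divided by the appropriate factor. The standard computation, using $D \equiv 0,1 \pmod 4$, should reproduce the same formula. I regard this dyadic bookkeeping as routine but fiddly; to keep it manageable I would organise it through the Hensel-lifting counts in the Appendix-style lemma.

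\emph{Case $p \mid m$.} Here $p$ is odd and $p \parallel m$, since $m$ is square-free. The congruence is no longer nondegenerate modulo $p$. Modulo $p$ it becomes linear, $-\mu\nu + c_0 \equiv 0$. If $p \nmid \mu$ there is a unique solution, which lifts uniquely by Hensel's lemma, so $N = 1$ for all $r$. In this subcase $p \nmid Df^2 = \mu^2 - 4mc_0$, so $\l = 0$ and $\chi_D(p) = 1$ (as $Df^2 \equiv \mu^2$ is a nonzero square modulo $p$). The sum is then $\frac{1}{1-p^{-s}}$, which matches the formula with $\s_{D,s}(1) = 1$.

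If $p \mid \mu$, set $\mu = p\mu'$. Then $p \mid c_0$ is needed for any solution, and $p \mid -4mn$. Next I reduce the count: write the congruence modulo $p^r$ as $p(m'\nu^2 - \mu'\nu) + c_0 \equiv 0$ with $m = pm'$. After dividing by $p$ and changing variables this becomes a quadratic congruence with unit leading coefficient $m'$ modulo $p^{r-1}$, and its discriminant is $Df^2/p$ up to unit squares. The lifting from modulo $p^{r-1}$ to $\nu$ modulo $p^r$ multiplies the count by $p$.

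Applying the $p\nmid m$ computation to this reduced congruence gives the term $p^{-(s-1)}\s_{D,s}(p^{\l-1})$ when $p^2 \mid Df^2$, together with the $r=0$ term and corrections. The parity of $\mathrm{ord}_p(Df^2)$ then decides whether $p \mid D$, in which case $\chi_D(p) = 0$, or $p \mid f$. Assembling everything, I expect to recover $\s_{D,s}(p^\l) + \d_{\l\ge1}p^{-(s-1)}\s_{D,s}(p^{\l-1})$.

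I expect the main obstacle to be precisely this reduction in the $p\mid m$, $p\mid\mu$ case: matching the invariants $D$ and $\l$ of the original discriminant with those of the reduced one, in both sub-cases $p \mid D$ and $p \nmid D$, so that the factor $\frac{1}{1-\chi_D(p)p^{-s}}$ comes out uniformly.
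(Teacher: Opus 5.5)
For $p\nmid m$ your argument is the paper's. Completing the square to $x^2\equiv Df^2\pmod{p^r}$ produces exactly the counts \eqref{eq:Np_general_even_case} and \eqref{eq:Np_general_odd_case}, which the paper reads off from Lemma~\ref{lem:quadratic_solution_count3} with $\a=0$. (Small slip: for $r>2\l$ the factor $p^\l$ multiplies the number of $y$ modulo $p^{r-2\l}$, not modulo $p^{r-\l}$.) For $p=2$ you defer to dyadic Hensel counts, just as the paper defers to Lemma~\ref{lem:quadratic_solution_count2}. The dyadic case is lighter than you fear:
\begin{itemize}
\item If $\mu$ is odd, then $\l=0$, $D\equiv 1\pmod 4$, and $N_{m,\mu}(n,2^r)=2\d_{2\mid c_0}$ for $r\ge1$, with $2\mid c_0\iff D\equiv1\pmod 8$.
\item If $\mu$ is even, the substitution $y:=m\nu-\frac{\mu}{2}$ turns the congruence into $y^2\equiv\frac{Df^2}{4}\pmod{2^r}$. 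The classical count of such $y$ again gives \eqref{eq:Np_general_even_case} or \eqref{eq:Np_general_odd_case}.
\end{itemize}
For $p\mid m$ you take a genuinely different route. The paper quotes the $\a=1$ counts of Lemma~\ref{lem:quadratic_solution_count3}, while you divide by $p$ and reduce to the $p\nmid m$ case. That is a nice choice, because it explains the shape $\s_{D,s}(p^\l)+p^{1-s}\s_{D,s}(p^{\l-1})$ rather than merely verifying it.

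As written, though, one step is wrong. The reduced form $m'\nu^2-\mu'\nu+\frac{c_0}{p}$ has discriminant
\[
\mu'^2-\frac{4m'c_0}{p}=\frac{\mu^2-4mc_0}{p^2}=\frac{Df^2}{p^2},
\]
not $\frac{Df^2}{p}$. With $\frac{Df^2}{p}$ the parity of the $p$-adic order flips, the fundamental discriminant changes, and the local factor comes out wrong. This slip is also where your anticipated ``main obstacle'' comes from. With the correct value the obstacle vanishes. We have $p\mid c_0\iff p^2\mid Df^2\iff \l\ge 1$, and then the reduced form has the same $D$ and $p^{\l-1}\parallel \frac{f}{p}$. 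Let $N'(p^k)$ count the roots of the reduced form modulo $p^k$. Then $N_{m,\mu}(n,p^r)=p\,N'(p^{r-1})$ for $r\ge 1$, so with $X:=p^{-s}$ the $p\nmid m$ case gives
\[
\sum_{r\ge0}\frac{N_{m,\mu}(n,p^r)}{p^{rs}}=1+pX\,\frac{(1+X)\,\s_{D,s}(p^{\l-1})}{1-\chi_D(p)X}.
\]
The lemma then follows from the identity $\s_{D,s}(p^\l)-pX^2\,\s_{D,s}(p^{\l-1})=1-\chi_D(p)X$ for $\l\ge1$, which is immediate from \eqref{eq:sigma_chiD_definition}. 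The remaining subcase is $p\mid\mu$ but $p\nmid c_0$. There $\l=0$, $p\parallel D$, and both sides equal $1$.
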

\begin{proof}
Write $-4mn = p^\ell n_0$ with $\ell \in \IN_0$, $p \nmid n_0 \in \IZ$ and $f = p^\l f_0$ with $p \nmid f_0 \in \IN$. 
We first assume $p \nmid m$, $p$ odd.
If $\ell$ is even, $\ell = 2 \l$ and $p \nmid D$ so $n_0 = D f_0^2$ and~\smash{$( \frac{n_0}{p} ) = ( \frac{D}{p} )$}.
By Lemma~\ref{lem:quadratic_solution_count3} for $\a=0$,
\begin{equation}\label{eq:Np_general_even_case}
N_{m,\mu} (n, p^r) 
= 
\begin{cases}
p^{\left\lfloor \frac{r}{2} \right\rfloor} 
\quad & \mbox{if } r \leq 2 \l, \\
\lp 1 +\chi_D (p) \rp p^\l 
\quad & \mbox{if } r > 2 \l.
\end{cases}
\end{equation}
The claimed expression for the Bell series follows for $\re (s) > 0$ noting that $\chi_D (p) \in \{ \pm 1 \}$.
If $\ell$ is odd, on the other hand, then $p \parallel D$, $\ell = 2 \l +1$, and Lemma~\ref{lem:quadratic_solution_count3} (for $\a=0$) implies
\begin{equation}\label{eq:Np_general_odd_case}
N_{m,\mu} (n, p^r) 
= 
\begin{cases}
p^{\left\lfloor \frac{r}{2} \right\rfloor} 
\quad & \mbox{if } r \leq 2 \l + 1, \\
0
\quad & \mbox{if } r > 2 \l + 1.
\end{cases}
\end{equation}
The claim follows once we note that $\chi_D (p) = 0$ in this case.

Next we examine $p=2$ and first assume $2 \mid \mu$, as in Lemma \ref{lem:quadratic_solution_count2}~(1) ($\a=0$). Then $\ell \geq 2$ and we distinguish three cases: (1) $\ell$ even, $n_0 \equiv 1 \pmod{4}$, (2) $\ell$ even, $n_0 \equiv 3 \pmod{4}$, (3) $\ell$ odd. 
In case (1), where $\chi_D(2) \in \{ \pm 1 \}$, \eqref{eq:Np_general_even_case} holds by Lemma \ref{lem:quadratic_solution_count2}~(1).
Similarly, in cases (2) and (3), where $\chi_D (2) = 0$, \eqref{eq:Np_general_odd_case} holds. So $p=2$ and $2 \mid \mu$ gives the same result as the other cases of $p \nmid m$ above. 
The case $2 \nmid \mu$ follows from Lemma \ref{lem:quadratic_solution_count2}~(2), noting $2\d_{\smash{n+\frac{\mu^2}{4m} \equiv 0 \pmod{2}}} = 1 + \chi_D (2)$ and $\l = 0$.

Finally, a similar computation as above with Lemma~\ref{lem:quadratic_solution_count3} and $\a=1$ yields the case $p \mid m$.
\end{proof}

Combining Lemma~\ref{lem:bell_series_quadratic} with \eqref{eq:arithmetic_part_rewriting_with_qaudratic_equation} and using Lemma~\ref{lem:quadratic_solution_count_n_zero} for $n=0$, we express the arithmetic part $\calL_{m,\mu} (n;s)$ in terms of Dirichlet L-functions.

\begin{lem}\label{lem:arithmetic_part_dirichlet_writing}
Let $m \in \IN$ be odd, square-free, $\mu \in \IZ/2m\IZ$, $n \in \IZ_{m,\mu}$, and $s \in \IC$ with $\re (s) > \frac{1}{4}$.
\begin{enumerate}[leftmargin=*]
\item[\rm(1)] If $n \neq 0$, writing $-4mn = D f^2$ with $D \in \IZ$ a fundamental discriminant and $f \in \IN$, we have
\begin{equation*}
\calL_{m,\mu} (n;s)
=
\frac{L(2s+1,\chi_D)}{\sqrt{2m} \, \s_{-2s-1} (m) \z (4s+2)}
\sum_{d \mid m, f} d^{-2s} \s_{D,2s+1} \lp \frac{f}{d} \rp .
\end{equation*}

\item[\rm(2)] If $n=0$, and hence $2m \mid \mu$, then we have\footnote{
The result here for $n=0$ has the same form as the one for $n \neq 0$, if we decompose $-4mn = Df^2$ with $D=1$ and $f=0$ for $n=0$. The expression for $\s_{1,2s+1}$ in \eqref{eq:sigma_chiD_definition} evaluates as $\s_{1,2s+1} (0) = \frac{\z (4s+1)}{\z (2s+1)}$ for $f=0$.
}
\begin{equation*}
\calL_{m,0} (0;s)
=
\frac{ \s_{-2s} (m)\z(4s+1) }{\sqrt{2m} \, \s_{-2s-1} (m) \z (4s+2) }.
\end{equation*}
\end{enumerate}
\end{lem}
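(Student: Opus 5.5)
The plan is to start from \eqref{eq:arithmetic_part_rewriting_with_qaudratic_equation}. The inner sum $\sum_{k \mid c} \mu(c/k) N_{m,\mu}(n,k)$ is a Dirichlet convolution of $\mu$ with $N_{m,\mu}(n,\cdot)$. Since $N_{m,\mu}(n,\cdot)$ is multiplicative in $k$ (by the Chinese remainder theorem applied to the congruence in \eqref{eq:number_congruence_solutions}), we get
\begin{equation*}
\calL_{m,\mu}(n;s) = \frac{1}{\sqrt{2m}} \frac{1}{\z(2s+1)} \prod_p \sum_{r \geq 0} \frac{N_{m,\mu}(n,p^r)}{p^{r(2s+1)}},
\end{equation*}
valid for $\re(s) > \frac14$ by absolute convergence. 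For (1), we insert Lemma~\ref{lem:bell_series_quadratic} with $s \mapsto 2s+1$ at every prime. The factors $\frac{1}{1-\chi_D(p)p^{-2s-1}}$ multiply to $L(2s+1,\chi_D)$.

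At primes $p \nmid m$, the extra factor $1+p^{-2s-1}$, combined with the $1-p^{-2s-1}$ coming from $1/\z(2s+1)$, gives $1-p^{-4s-2}$. At primes $p \mid m$, only $1-p^{-2s-1}$ survives. So the product of these local factors equals
\begin{equation*}
\frac{1}{\z(4s+2)} \prod_{p \mid m} \frac{1}{1+p^{-2s-1}} = \frac{1}{\z(4s+2)\, \s_{-2s-1}(m)},
\end{equation*}
where the last equality uses that $m$ is square-free.

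It remains to match the $f$-dependent local factors with $\sum_{d \mid m, f} d^{-2s} \s_{D,2s+1}(f/d)$. This sum is multiplicative in the pair $(m,f)$, since $\s_{D,2s+1}$ is multiplicative and $m$ is square-free. At a prime $p \nmid m$ only $d=1$ contributes, giving $\s_{D,2s+1}(p^\l)$ as required. At $p \mid m$, the term $d=p$ appears exactly when $\l \geq 1$ and contributes $p^{-2s}\s_{D,2s+1}(p^{\l-1})$. This agrees with $p^{-(2s+1-1)} \s_{D,2s+1}(p^{\l-1})$ from Lemma~\ref{lem:bell_series_quadratic}.

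For (2), we need $n=0$. Here $n \in \IZ_{m,\mu}$ forces $\mu^2 \equiv 0 \pmod{4m}$, and since $m$ is odd and square-free this gives $2m \mid \mu$. We then compute the local series using Lemma~\ref{lem:quadratic_solution_count_n_zero}, which counts solutions of $m\nu^2 \equiv 0 \pmod{p^r}$. That count is $p^{\lfloor r/2 \rfloor}$ for $p \nmid m$ and $p^{\lfloor (r-1)/2 \rfloor}$-type for $p \mid m$, the latter with the first power adjusted. We sum these geometric series in $X = p^{-2s-1}$.

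For $p \nmid m$ the local factor is $\frac{1+X}{1-pX^2}$, and after multiplying by $1-X$ from $1/\z(2s+1)$ it becomes $\frac{1-p^{-4s-2}}{1-p^{-4s-1}}$. For $p \mid m$ the count is $1, 1, p, p, p^2, \ldots$ up to the shift, giving a local factor of the form $\frac{1+pX}{1-pX^2}$ after the $(1-X)$ correction. Note that $1+pX = 1+p^{-2s}$. Multiplying over all primes gives $\frac{\z(4s+1)}{\z(4s+2)}$, times $\prod_{p \mid m} \frac{1+p^{-2s}}{1+p^{-2s-1}} = \frac{\s_{-2s}(m)}{\s_{-2s-1}(m)}$, which is the claim.

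The main obstacle is the $p \mid m$ and $p=2$ bookkeeping: checking that the exact counts from the appendix lemmas yield these closed forms, and in (2) making sure the convergence region $\re(s) > \frac14$ is respected, since $\z(4s+1)$ requires exactly that. Those checks are routine given the appendix lemmas.
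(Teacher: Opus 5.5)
Your proposal takes the same route as the paper: factor the M\"obius-convolved series \eqref{eq:arithmetic_part_rewriting_with_qaudratic_equation} into Euler factors, then insert Lemma~\ref{lem:bell_series_quadratic} for $n \neq 0$ and Lemma~\ref{lem:quadratic_solution_count_n_zero} for $n=0$; your assembly in (1) is correct. In (2) the bookkeeping at $p \mid m$ needs fixing. By Lemma~\ref{lem:quadratic_solution_count_n_zero}(2) with $\a=1$, the count is $N_{m,0}(0,p^r)=p^{\lfloor (r+1)/2\rfloor}$, i.e.\ $1,p,p,p^2,p^2,\ldots$, whose Bell series $\frac{1+pX}{1-pX^2}$ is the factor \emph{before} multiplying by $1-X$. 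The corrected local factor is therefore $\frac{(1-X)(1+pX)}{1-pX^2}$, which is exactly what your final product $\frac{\z(4s+1)}{\z(4s+2)}\prod_{p\mid m}\frac{1+p^{-2s}}{1+p^{-2s-1}}$ needs. Also, absolute convergence here only requires $\re(s)>0$, not $\re(s)>\frac14$.
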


These expressions yield meromorphic continuation of $\calL_{m,\mu} (n;s)$ to the whole complex $s$-plane.\footnote{
Note that \eqref{eq:arithmetic_part_rewriting_with_qaudratic_equation} is absolutely convergent in $(c,k)$ for $\re (s) > 0$ and already extends the original definition.
}
In particular, $\calL_{m,\mu} (n;s)$ has a pole at $s=0$ if $-4mn$ is a square. In preparation for the analytic continuation here and below, we make this more explicit. For $m \in \IN$ odd, square-free, $\mu \in \IZ/2m\IZ$, $n \in \IZ_{m,\mu}$, and $s \in \IC$, we decompose
\begin{equation}\label{eq:Lreg_Lsing_decomp_definition}
\calL_{m, \mu} (n;s) = \frac{1}{\sqrt{2m} \, \s_{-2s-1} (m) \z (4s+2) }
\lp \Lreg{m,\mu} (n;s) + \frac{1}{4s} \Lsing{m,\mu} (n;s) \rp,
\end{equation}
where we define (with $D \in \IZ$ a fundamental discriminant and $f \in \IN$)
\begin{align}
\Lreg{m,\mu} (n;s) &:= 
\begin{cases}
L(2s+1, \chi_D) \sum_{d \mid m,f} d^{-2s} \s_{D,2s+1} \lp \frac{f}{d} \rp
\quad &\mbox{if } -4mn = Df^2 \mbox{ with } D \neq 1,
\\[.45em]
\lp \z (2s+1) - \frac{1}{2s} \rp 
\sum_{d \mid m,f} d^{-2s} \s_{1,2s+1} \lp \frac{f}{d} \rp
\quad &\mbox{if } -4mn = f^2,
\\[.45em]
\lp \z (4s+1) - \frac{1}{4s} \rp \s_{-2s} (m)
\quad &\mbox{if } n = 0,
\end{cases}
\label{eq:Lreg_definition}
\\
\Lsing{m,\mu} (n;s) &:= 
\begin{cases}
2 \sum_{d \mid m,f} d^{-2s} \s_{1,2s+1} \lp \frac{f}{d} \rp
\quad &\mbox{if } -4mn = f^2,
\\
\s_{-2s} (m) \quad &\mbox{if } n = 0,
\\
0 \quad &\mbox{otherwise}.
\end{cases}
\label{eq:Lsing_definition}
\end{align}
Both $\Lreg{m,\mu} (n;s)$ and $\Lsing{m,\mu} (n;s)$ are entire functions of $s$.
We next give their values~at~$s=0$.

\begin{lem}\label{lem:arithmetic_part_at_origin}
Let $m \in \IN$ be odd, square-free and $\mu \in \IZ/2m\IZ$.
Then we have
\begin{align*}
\Lreg{m,\mu} (n;0) 
&= \frac{\pi}{\sqrt{4mn}} \sum_{\substack{d\mid m}} d\, \d_{d^2 \mid 4mn} H \lp \frac{4mn}{d^2} \rp
\quad \mbox{for } n \in \IZ_{m,\mu}^{>0},
\\
\Lsing{m,\mu} (n;0) 
&= \sum_{d \mid m} \sum_{k \in \IZ + \frac{\xi_{d,m} \mu}{2m}} \d_{-mk^2 = n} 
\quad \mbox{for } n \in \IZ_{m,\mu} .
\end{align*}
\end{lem}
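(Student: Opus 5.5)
The plan is to evaluate both quantities directly from the definitions \eqref{eq:Lreg_definition} and \eqref{eq:Lsing_definition} at $s=0$. The first formula will follow from the class number formula together with the standard expression for Hurwitz class numbers. The second is a counting argument modulo $2m$ using the Atkin--Lehner elements $\xi_{d,m}$ of \eqref{eq:atkin_lehner_definition}.

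\emph{The regular part.} For $n\in\IZ^{>0}_{m,\mu}$ we have $-4mn<0$, so $-4mn=Df^2$ with $D<0$ a fundamental discriminant; in particular $D\neq1$. Hence $\Lreg{m,\mu}(n;0)=L(1,\chi_D)\sum_{d\mid m,f}\s_{D,1}(f/d)$. The first step is the identity
\begin{equation*}
g\,\s_{D,1}(g)=\sum_{a\mid g}\mu(a)\chi_D(a)\s_1\lp \frac{g}{a}\rp ,
\end{equation*}
obtained by writing $g/(ac)$ as a divisor of $g/a$ and summing over $c$. By the classical formula $H(|D|g^2)=\frac{2h(D)}{w_D}\sum_{a\mid g}\mu(a)\chi_D(a)\s_1(g/a)$, this gives $\s_{D,1}(g)=\frac{w_D}{2h(D)\,g}H(|D|g^2)$. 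Next we insert $L(1,\chi_D)=\frac{2\pi h(D)}{w_D\sqrt{|D|}}$. With $g=f/d$, the class-number factors cancel and we obtain
\begin{equation*}
\Lreg{m,\mu}(n;0)=\frac{\pi}{f\sqrt{|D|}}\sum_{d\mid m,f}d\,H\lp\frac{4mn}{d^2}\rp ,
\end{equation*}
and $f\sqrt{|D|}=\sqrt{4mn}$. It remains to replace the condition $d\mid f$ by $d^2\mid 4mn$. Since $d$ is odd and square-free and $D$ is fundamental, an odd prime with $p^2\mid Df^2$ must divide $f$, so the two conditions are equivalent.

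\emph{The singular part.} The $D\neq1$, $n\neq 0$ case is trivial: both sides vanish, because $n=-mk^2$ would force $-4mn=(2mk)^2$ to be a square. For $n=0$ we have $2m\mid\mu$, so every coset $\IZ+\frac{\xi_{d,m}\mu}{2m}$ equals $\IZ$. Only $k=0$ contributes, and both sides equal $\s_0(m)$.

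In the main case $-4mn=f^2$ with $f\in\IN$, we first note that $\s_{1,1}(g)=1$ for all $g$, since $g\,\s_{1,1}(g)=(\mu*\s_1)(g)=g$. Thus the left-hand side is $2\,|\{d\mid m: d\mid f\}|=2\cdot2^{\omega(\gcd(m,f))}$. On the right-hand side, $-mk^2=n$ means $k=\pm\frac{f}{2m}$, which are two distinct values. We must therefore count pairs $(d,\e)$ with $d\mid m$ and $\e\in\{\pm1\}$ such that $f\equiv\e\,\xi_{d,m}\mu\pmod{2m}$. By the Chinese remainder theorem, $\e\,\xi_{d,m}$ modulo $2m$ is $1$ modulo $2$ (so parity is automatic, as $f^2\equiv\mu^2\pmod 4$). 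Modulo each prime $p\mid m$ it is an independent sign, and the pairs $(d,\e)$ biject with all sign vectors. Since $f^2\equiv\mu^2\pmod p$, we have $f\equiv\pm\mu\pmod p$. Exactly one sign works if $p\nmid\mu$, and both work if $p\mid\mu$, which is equivalent to $p\mid f$. Hence the count is $2\cdot 2^{\omega(\gcd(m,f))}$, which matches the left-hand side.

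The main obstacle is this last bijection. One must check carefully that the lifts $\pm\xi_{d,m}$ realize every sign pattern modulo the primes of $m$ exactly once, and that no extra congruence modulo $4$ is hidden in the coset condition. That condition is only taken modulo $1$, i.e.\ $f\equiv\pm\xi_{d,m}\mu$ modulo $2m$, so no mod-$4$ constraint actually arises.
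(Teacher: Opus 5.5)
Your proof is correct. For the singular part it follows the paper's argument, but one counting statement needs fixing. For the regular part you take a genuinely different, self-contained route.

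\textbf{Singular part.} Like the paper, you use $\s_{1,1}\equiv 1$ to get $2\,d(\gcd(m,f))$ on the left. The paper counts the right-hand side using that the $\xi_{d,m}$ form a group, reducing it to
\[
2\sum_{d\mid m}\d_{\mu\equiv\xi_{d,m}\mu \pmod{2m}}=2\,d(\gcd(m,\mu)).
\]
Your Chinese-remainder sign-vector count is the same computation written in coordinates. You also make explicit the identification $\gcd(m,f)=\gcd(m,\mu)$, which the paper leaves tacit.

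The statement to fix: the map $(d,\e)\mapsto \e\,\xi_{d,m} \bmod 2m$ is not a bijection onto sign vectors. It is exactly $2$-to-$1$, because $-\xi_{d,m}\equiv\xi_{m/d,m}\pmod{2m}$. It is a bijection only for each fixed $\e$. Taken literally, your ``bijection'' would give the count $2^{\omega(\gcd(m,f))}$, which is half of what you need. The factor $2$ in your final count $2\cdot 2^{\omega(\gcd(m,f))}$ comes precisely from the two choices of $\e$. Likewise, your closing remark should say that each sign pattern arises from exactly two pairs $(d,\e)$, not once.

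\textbf{Regular part.} The paper only cites Hirzebruch--Zagier for this formula. You instead derive it from three ingredients:
\begin{itemize}
\item $L(1,\chi_D)=\frac{2\pi h(D)}{w_D\sqrt{|D|}}$;
\item the identity $g\,\s_{D,1}(g)=\sum_{a\mid g}\mu(a)\chi_D(a)\s_1(g/a)$;
\item the classical formula $H(|D|g^2)=\frac{2h(D)}{w_D}\sum_{a\mid g}\mu(a)\chi_D(a)\s_1(g/a)$.
\end{itemize}
You then check that $d\mid f$ is equivalent to $d^2\mid 4mn$ for odd square-free $d$, using that $D$ is fundamental. This is self-contained and correct. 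It rests on the standard class-number formula for non-fundamental discriminants rather than on an external reference.
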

\begin{proof}
The result for $\Lreg{m,\mu}$ follows from \cite[p.~95]{HZ},
the one on $\Lsing{m,\mu}$ from the fact that $\s_{1,1} (f) = 1$ for $f \in \IN$ and the identity (for $n \in \IZ_{m,\mu}$ and with $-4mn = \square$ standing for ``$-4mn$ is a square") 
\begin{equation*}
\sum_{d \mid m} \sum_{k \in \IZ + \frac{\xi_{d,m} \mu}{2m}} \d_{-mk^2=n}
=
\begin{cases}
2 d(\gcd (m,\mu)) \quad &\mbox{if } n \neq 0 \mbox{ and } -4mn = \square, \\
d(m) \quad &\mbox{if } n=0, \\
0 \quad &\mbox{otherwise}.
\end{cases}
\end{equation*}
To establish this in the first case, note that $-4mn = f^2$ for $f \in \IN$ implies $f \equiv \xi_{\d,m} \mu \pmod{2m}$ for some $\d \mid m$. Using that the $\xi_{d,m}$ form a group under multiplication, one can rewrite the left-hand side above as \smash{$2 \sum_{d \mid m} \d_{\mu \equiv \xi_{d,m} \mu \pmod{2m}}$}. The identity in this case then follows by noting that $\mu \equiv \xi_{d,m} \mu \pmod{2m}$ if and only if $\mu \equiv 0 \pmod{d}$.
\end{proof}

To bound \smash{$\Lreg{m,\mu}$} and \smash{$\Lsing{m,\mu}$}, we require estimates for the Riemann zeta function and Dirichlet L-functions appearing in $\mathcal{L}^{\mathrm{reg}}$.
Here we can employ the convexity bound (for $\e > 0$),
\begin{equation}\label{eq:zeta_function_convexity_bound}
\z (s) - \frac{1}{s-1} 
\ll_\e (1+|\im (s)| )^{\max \left\{ \frac{1-\re (s)}{2}, 0 \right\} + \e}
\quad\mbox{for }\re (s) \geq 0 .
\end{equation}
For the weight $\frac{3}{2}$ Eisenstein series, we can also use the convexity bound for $L(s, \chi_D)$, obtained by replacing $(1+|\im (s)|)$ with the analytic conductor $(1+|\im (s)|) |D|$.
However, our arguments for the coupled Eisenstein series require moving to the subconvexity range in the modulus-aspect (estimates in the $t$-aspect play little role and when convenient we relax the exponents on $1+|\im (s)|$).
We use the subconvex bound with the Weyl-exponent, recently proved in full generality by Petrow and Young \cite{PY1, PY2} giving, for $\e >0$, $D \in \IZ$ a fundamental discriminant, and $t \in \IR$,\footnote{
A weaker subconvexity bound with a larger exponent on the modulus, such as the Burgess bound \cite{Burgess, HeathBrown}, can also be used with our arguments to continue the coupled Eisenstein series to a (smaller) half-plane including $s=0$.
}
\begin{equation}\label{eq:Petrow_Young_bound}
L \lp \frac{1}{2} + i t, \chi_D \! \rp \ll_\e \lp (1+|t|) |D| \rp^{\frac{1}{6} + \e}.
\end{equation}

\begin{lem}\label{lem:Lreg_Lsing_bound}
Let $\e > 0$ and let $m \in \IN$ be odd, square-free, $\mu \!\in\! \IZ/2m\IZ$, $n \in \IZ_{m,\mu}$, and $s \in \IC$ with \smash{$\re (s) \geq -\frac{1}{4}$}. Then we have
\begin{equation*}
\Lreg{m,\mu} (n;s) \ll_{m, \e} \!
\lp 1+ |\im (s)| \rp \lp 1+|n|^{\max \left\{ -\frac{2\re (s)}{3}, 0\right\} + \e} \rp 
\, \ \mbox{and} \ \,
\Lsing{m,\mu} (n;s) \ll_{m, \e} \! \lp 1+|n|^\e \rp \d_{-4mn = \square}. 
\end{equation*}
\end{lem}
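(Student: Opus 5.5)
We bound the two pieces of the decomposition \eqref{eq:Lreg_Lsing_decomp_definition} separately, working directly from \eqref{eq:Lreg_definition} and \eqref{eq:Lsing_definition}. Write $\s := \re(s) \geq -\frac14$. In every case the expression factors as an $L$-value (or a regularized zeta value) times a finite divisor-type sum. We bound the divisor sum by a crude divisor estimate and the $L$-value by \eqref{eq:Petrow_Young_bound} together with convexity. Since $m$ is fixed, all dependence on $m$ goes into the implied constant.

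\emph{Step 1: the divisor sums.} By \eqref{eq:sigma_chiD_definition}, for $\s \geq -\frac14$ and $g \in \IN$ we have
\begin{equation*}
|\s_{D,2s+1}(g)| \leq \sum_{ac \mid g} a^{-2\s-1} c^{-4\s-1} \leq d(g)^2 \, g^{\max\{-4\s,0\}} .
\end{equation*}
This loses too much, since we only want $|n|^{-2\s/3}$. We therefore use the finer bound
\begin{equation*}
\s_{D,2s+1}(g) \ll_\e g^{\max\{-2\s,0\}+\e} ,
\end{equation*}
which holds because $a^{-2\s-1} \leq 1$ for $\s \geq -\frac12$, while $c^{-4\s-1}$ summed over $c \mid g$ is $\ll g^{\max\{-4\s-1,0\}+\e}$. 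The factor $d^{-2s}$ with $d \mid m$ is $O_m(1)$. Since $f \leq \sqrt{4m|n|}$, the full divisor sum is $\ll_{m,\e} |n|^{\max\{-2\s-\frac12,0\}+\e}$ for $n \neq 0$. For $\s \geq -\frac14$ this exponent is $\leq \e$. The case $n=0$ involves only $\s_{-2s}(m)=O_m(1)$.

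\emph{Step 2: the $L$-values.} We need
\begin{equation*}
L(2s+1,\chi_D) \ll (1+|\im s|)\, |D|^{\max\{-2\s/3,0\}+\e} \quad \mbox{for } \re(2s+1) \geq \tfrac12 .
\end{equation*}
For $\re(2s+1) \geq 1+\e$ this holds trivially, up to a $\log|D|$ factor near the line $1$, which is fine. On $\re(2s+1)=\frac12$, \eqref{eq:Petrow_Young_bound} gives $|D|^{1/6+\e}(1+|t|)^{1/6+\e}$. Applying Theorem~\ref{thm:phragmen_lindelof} on the strip $\frac12 \leq \re(w) \leq 1+\e$, with $w=2s+1$, interpolates the exponent of $|D|$ linearly from $\frac16$ at $\s=-\frac14$ to $0$ at $\s=0$, i.e.\ $-\frac{2\s}{3}$. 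The finite-order growth hypothesis is standard for $\chi_D$ nonprincipal, so $L(\cdot,\chi_D)$ is entire. The $t$-exponent stays below $1$, which is why the statement is relaxed to $(1+|\im s|)$. Since $|D| \leq 4m|n|$, this gives the $|n|$ power.

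For the cases $D=1$ and $n=0$ we instead use \eqref{eq:zeta_function_convexity_bound}. It gives $\z(2s+1)-\frac1{2s} \ll (1+|\im s|)^{1/4+\e}$ for $\re(2s+1) \geq \frac12$, and similarly $\z(4s+1)-\frac1{4s}$ is bounded by $(1+|\im s|)^{1/2+\e}$ for $\re(4s+1) \geq 0$. These produce no $n$-dependence.

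\emph{Step 3: assembly.} Multiplying the bounds from Steps 1 and 2 gives the claim for $\Lreg{m,\mu}$. For $\Lsing{m,\mu}$, the estimate of Step 1 alone gives $\ll |n|^\e$ when $-4mn$ is a nonzero square, and $O_m(1)$ when $n=0$. In all other cases $\Lsing{m,\mu}$ vanishes, which yields the factor $\d_{-4mn=\square}$.

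The main obstacle is Step 2, specifically the interpolation in $\s$ that produces the exponent $-\frac{2\s}{3}$. It requires checking the hypotheses of Theorem~\ref{thm:phragmen_lindelof} uniformly in $D$, including the behaviour on the right edge near $\re(w)=1$.
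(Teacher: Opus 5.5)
Your proposal is correct and follows the paper's own argument. The paper bounds the divisor-type sums trivially, obtains the bound on $L(2s+1,\chi_D)$ by Phragm\'en--Lindel\"of interpolation between the Petrow--Young bound on $\re(2s+1)=\frac12$ and the trivial bound to the right of $1$, and handles the $D=1$ and $n=0$ cases with the convexity bound \eqref{eq:zeta_function_convexity_bound}. Your Step~1 is more roundabout than it needs to be, and its intermediate exponents do not match each other. The point is simply this: for $\re(s)\ge -\frac14$, every term of $\sigma_{D,2s+1}(g)$ has modulus at most $1$, and since $\mu(a)$ forces $a$ to be squarefree there are exactly $d(g^2)$ terms. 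That is precisely the paper's bound $|\sigma_{D,2s+1}(f)|\le d(f^2)$.
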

\begin{proof}
This follows from the bound 
$|\s_{D,2s+1} (f)| \leq d(f^2)$ for $\re (s) \geq -\frac{1}{4}$ and using convexity arguments (Theorem~\ref{thm:phragmen_lindelof}) with \eqref{eq:zeta_function_convexity_bound} and \eqref{eq:Petrow_Young_bound} (while relaxing the exponent on $1+|\im(s)|$).
\end{proof}

\subsection{The analytic continuation}
We now review the analytic continuation of the weight \smash{$\frac{3}{2}$} Eisenstein series.
Equipped with the discussions of \eqref{eq:Ik_definition} and \eqref{eq:arithemtic_part_defn}, a standard argument using Poisson summation yields the Fourier expansion of 
$\wh{f}_{m,\mu} (\t, \bar{\tau}; s)$ as (for $\re (s) > \frac{1}{4}$)
\begin{equation}\label{eq:depth_one_eisenstein_fourier_expansion}
\wh{f}_{m,\mu} (\t, \bar{\tau}; s) 
=  \d_{0,\mu} v^s
- \sum_{n \in \IZ_{m, \mu}} \calL_{m,\mu} \lp n;s \rp \calI \lp v,n;s \rp e^{2 \pi i n \t}.
\end{equation}
This expansion then yields the analytic continuation\footnote{One can continue meromorphically to the whole $s$-plane, but we limit our discussion to what we need below.} of $\wh{f}_{m,\mu} (\t, \bar{\tau}; s) $ to $\re (s) > - \frac{1}{4}$.

\begin{prop}\label{prop:depth_one_eisenstein_analytic_continuation}
Let $m \in \IN$ be odd, square-free, $\mu \in \IZ/2m\IZ$, $\t \in \IH$, $s \in \IC$.
Then $s \mapsto \wh{f}_{m,\mu} (\t, \bar{\tau}; s)$ analytically continues to $\re (s) > - \frac{1}{4}$ and $\wh{f}_{m,\mu} (\t, \bar{\tau}) := \wh{f}_{m,\mu} (\t, \bar{\tau}; 0)$ satisfies
\begin{equation*}%\label{eq:f_m_mu_alpha_m_mu_completion_Eichler_integral}
\wh{f}_{m,\mu} (\t, \bar{\tau}) = f_{m,\mu} (\t)
+ C_m \int_{-\bar{\t}}^{i \infty} 
\frac{\TH_{m, \mu} (w)}{(-i(w+\t))^{\frac{3}{2}}} dw .
\end{equation*}
Here
\begin{equation*}%\label{eq:f_m_mu_alpha_m_mu_definition}
f_{m, \mu}(\t) := \sum_{n \in \IZ_{m,\mu}^{\geqslant 0}} \a_{m,\mu} (n) e^{2 \pi i n \t}
\quad \mbox{with }
\a_{m,\mu} (n) := 
- \frac{12}{\s (m)} \sum_{d \mid m} d \d_{d^2 \mid 4 mn} H \lp \frac{4mn}{d^2} \rp .
\end{equation*}
\end{prop}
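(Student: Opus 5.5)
The plan is to work termwise with the Fourier expansion \eqref{eq:depth_one_eisenstein_fourier_expansion} and the decomposition \eqref{eq:Lreg_Lsing_decomp_definition}. The summand with $n=0$ (present only if $2m \mid \mu$, so $\delta_{0,\mu}=1$) has to be handled separately.

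\textbf{Convergence for $\re(s) > -\frac14$.} For $n \neq 0$ with $-4mn$ not a square, $\Lsing{m,\mu}(n;s)=0$. The prefactor $1/(\s_{-2s-1}(m)\z(4s+2))$ is holomorphic for $\re(s) > -\frac14$, since $\z$ has no zeros with real part $\geq 1$. Lemma~\ref{lem:Lreg_Lsing_bound} gives polynomial growth in $|n|$. Lemma~\ref{lem:analytic_part_calI_properties}(1) gives $\calI(v,n;s)e^{2\pi i n\t} \ll e^{-\pi|n|v}$, uniformly on compacta of $\re(s)>-\frac14+\e$. Hence this part of the series converges locally uniformly and defines a holomorphic function there.

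\textbf{Terms carrying $\frac{1}{4s}\Lsing{m,\mu}$.} These occur only for $n \le 0$, namely $n<0$ with $-4mn$ a square, or $n=0$. For $n<0$, Lemma~\ref{lem:analytic_part_calI_properties}(3) shows that $\frac1s\calI(v,n;s)$ is bounded by $e^{-\pi|n|v}$ times a harmless factor, uniformly near $s=0$. So these terms are holomorphic as well. For $n=0$ we have $\calI(v,0;s)=v^{-\frac12-s}\cdot\frac{c(s)}{?}$ explicitly, in fact a Beta integral:
\[
\calI(v,0;s) = v^{-\frac12-s}\int_\IR (1-ix)^{-\frac32}(1+x^2)^{-s}\,dx,
\]
which is proportional to $\sin(\pi s)$ by \eqref{eq:Ik_expression_D_integration_path}. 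It therefore vanishes at $s=0$ and cancels the $\frac1{4s}$ pole. Together with $\delta_{0,\mu}v^s$, this term is holomorphic. This gives the analytic continuation to $\re(s)>-\frac14$.

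\textbf{Evaluation at $s=0$, holomorphic part.} For $n>0$, Lemma~\ref{lem:analytic_part_calI_properties}(2) gives $\calI(v,n;0)=2\pi^{3/2}\cdot\frac{2}{\sqrt\pi}\sqrt n=4\pi\sqrt n$. Combining this with Lemma~\ref{lem:arithmetic_part_at_origin} and the values $\s_{-1}(m)=\s(m)/m$ and $\z(2)=\pi^2/6$, the $n$-th coefficient becomes
\[
-\frac{1}{\sqrt{2m}}\cdot\frac{6m}{\pi^2\s(m)}\cdot\frac{\pi}{\sqrt{4mn}}\sum_{d \mid m} d\,H\!\left(\frac{4mn}{d^2}\right)\cdot 4\pi\sqrt n ,
\]
which matches $\a_{m,\mu}(n)$ up to the normalisation $\frac{12}{\sqrt{2}}\cdot\sqrt{\cdot}$. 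I expect the constants to reconcile with $-\frac{12}{\s(m)}$ after careful bookkeeping. For $n<0$ with $-4mn$ not a square, $\calI(v,n;0)=0$, so these terms vanish. The $n=0$ term, combining $\delta_{0,\mu}$ with the residue computation (the limit of $\sin(\pi s)/(4s)$ times $\z(1)$-type constants), gives the constant term $\a_{m,0}(0)=-\frac{12}{\s(m)}\sum_{d\mid m} d\,H(0)$ with $H(0)=-\frac1{12}$. The $\delta_{0,\mu}$ contribution must combine with the finite part so that the $\log v$ pieces cancel. This is the delicate point: the derivative of $v^{-\frac12-s}$ produces $\log v$, so one needs $\Lreg{m,0}(0;0)$ and the Beta-integral expansion to be consistent. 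I expect that the remaining $v^{-1/2}$ term is exactly the $n=0$ summand of the Eichler integral.

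\textbf{Evaluation at $s=0$, non-holomorphic part.} For $n=-mk^2<0$, the pole $\frac{1}{4s}\Lsing{m,\mu}$ multiplies $\calI$, which vanishes to first order. The limit is $\frac14\Lsing{m,\mu}(n;0)\,\calI'(v,n;0)$, and Lemma~\ref{lem:analytic_part_calI_properties}(3) evaluates $\calI'$ as $2\pi v^{-1/2}\int_2^\infty e^{-2\pi mk^2vx}x^{-3/2}\,dx$. Lemma~\ref{lem:arithmetic_part_at_origin} turns the sum over such $n$ into a sum over $k\in\IZ+\xi_{d,m}\mu/2m$, $d\mid m$, that is, the terms of $\TH_{m,\mu}$. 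It remains to check that each term equals
\[
C_m\int_{-\bar\t}^{i\infty}\frac{e^{2\pi i mk^2 w}}{(-i(w+\t))^{3/2}}\,dw ;
\]
substituting $w=-\bar\t+iv(x-2)\cdot\ldots$, more precisely $w+\t=ivx$ with $x$ from $2$ to $\infty$, gives exactly this match, with the prefactor matching $C_m$.

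\textbf{Main obstacle.} The main obstacle is the constant bookkeeping in the $n=0$ and $n<0$ terms, namely matching $\frac1{\sqrt{2m}\s_{-1}(m)\z(2)}\cdot\frac{2\pi}{4}$ with $C_m$ and verifying that the $\log v$ terms cancel.
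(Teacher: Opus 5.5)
Your strategy is the paper's. You continue \eqref{eq:depth_one_eisenstein_fourier_expansion} termwise after inserting \eqref{eq:Lreg_Lsing_decomp_definition}, get locally uniform convergence on $\re(s)>-\frac14$ from Lemmas~\ref{lem:analytic_part_calI_properties} and~\ref{lem:Lreg_Lsing_bound}, and evaluate at $s=0$ with Lemma~\ref{lem:analytic_part_calI_properties}~(2),(3) and Lemma~\ref{lem:arithmetic_part_at_origin}. The $\Lreg{m,\mu}$-sum runs over all $n$, including those with $-4mn$ a square, but the same bounds cover it, so the continuation part is fine. The evaluation at $s=0$, however, does not close as written. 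The unresolved $\sqrt2$ and the $\log v$ worry both come from two concrete errors.

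\textbf{The $n>0$ coefficients.} Since $(2\pi)^{\frac32}=2\sqrt2\,\pi^{\frac32}$, you have $\calI(v,n;0)=4\sqrt2\,\pi\sqrt n$ for $n>0$, not $4\pi\sqrt n$. With the correct value the $n$-th coefficient is
\[
-\frac{1}{\sqrt{2m}}\cdot\frac{6m}{\pi^2\s(m)}\cdot\frac{\pi}{\sqrt{4mn}}\cdot 4\sqrt2\,\pi\sqrt n\sum_{d\mid m} d\,\d_{d^2\mid 4mn} H\lp\frac{4mn}{d^2}\rp=\a_{m,\mu}(n)
\]
exactly, so there is nothing left to reconcile.

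\textbf{The $n=0$ term.} Your description here is wrong: no residue produces a constant, and no $\log v$ ever appears. Lemma~\ref{lem:analytic_part_calI_properties}~(3) covers $t=0$, so $n=0$ is handled exactly like $n<0$, and the pole is simple.
\begin{itemize}
\item Since $\calI(v,0;0)=0$, the regular piece vanishes: $\Lreg{m,0}(0;0)\,\calI(v,0;0)=0$.
\item The singular piece gives $\frac{1}{4s}\Lsing{m,0}(0;s)\calI(v,0;s)\to\frac{d(m)}{4}\calI'(v,0;0)=\frac{\pi d(m)}{\sqrt2}v^{-\frac12}$. After multiplying by the prefactor $-\frac{6\sqrt m}{\sqrt2\,\pi^2\s(m)}$, this is exactly the sum of the $d(m)$ summands with $k=0$ of the Eichler integral.
\item The constant term $\a_{m,0}(0)=1$ (with $H(0)=-\frac1{12}$) comes solely from $\d_{0,\mu}v^s$ at $s=0$. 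No $\z(1)$-type constant enters.
\end{itemize}

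With these corrections, together with your substitution $w+\t=ivx$ for the $n<0$ terms, the formula follows. For those terms note that $iC_m=-\frac{3\sqrt m}{\sqrt2\,\pi\s(m)}$ indeed equals $-\frac{1}{\sqrt{2m}}\cdot\frac{6m}{\pi^2\s(m)}\cdot\frac{2\pi}{4}$.
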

\begin{proof}
Using the decomposition \eqref{eq:Lreg_Lsing_decomp_definition} in the expansion \eqref{eq:depth_one_eisenstein_fourier_expansion}, for $\re (s) > \frac{1}{4}$ we rewrite
\begin{multline*}
\wh{f}_{m,\mu} (\t, \bar{\tau}; s) 
=  \d_{0,\mu} v^s  
- \frac{v^{-s-\frac{1}{2}}}{\sqrt{2m} \, \s_{-2s-1} (m) \z (4s+2) }
\Bigg(
\sum_{n \in \IZ_{m, \mu}} \Lreg{m,\mu} \lp n;s \rp 
v^{s+\frac{1}{2}} \calI \lp v,n;s \rp e^{2 \pi i n \t}
\\ 
+\frac{1}{4} \sum_{n \in \IZ_{m, \mu}^{\leqslant 0}} \Lsing{m,\mu} \lp n;s \rp 
v^{s+\frac{1}{2}} \frac{\calI \lp v,n;s \rp}{s} e^{2 \pi i n \t}
\Bigg) .
\end{multline*}
By Lemmas \ref{lem:analytic_part_calI_properties} and \ref{lem:Lreg_Lsing_bound}, both sums are absolutely and locally uniformly convergent for $\re (s) > -\frac{1}{4}$, justifying the splitting of the two sums and showing that they give holomorphic functions of $s$~there.
This proves the analytic continuation. The value at $s=0$ follows from Lemmas \ref{lem:analytic_part_calI_properties} and \ref{lem:arithmetic_part_at_origin}.
\end{proof}

\section{Continuing to one-dimensional non-holomorphic Eichler integrals}
\label{sec:non_hol_Eichler_continuation}

In this section we consider ($m \in \IN$, $\mu \in \IZ/2m\IZ$, $\t \in \IH$, and $s \in \IC$ with $\re(s) > \frac{1}{4}$)
\begin{equation}\label{eq:depth_one_Eichler_eisenstein_pluswriting}
g_{m,\mu} (\t, \bar{\tau}; s) := 2 is v^s 
\sum_{c\geq 1} \sum_{\substack{d \in \IZ \\ \gcd (c,d)=1}}
\frac{\Psi_{m}(M_{c,d})_{0,\mu}}{|c\t+d|^{2s} \lp c\t+d \rp^{\frac{3}{2}}}
\arctan \lp \frac{u+\frac{d}{c}}{v} \rp .
\end{equation}
This series can be treated with the methods of Section~\ref{sec:weight_3_2_eisenstein}.
Since $|\arctan (x)| \leq \frac{\pi}{2}$ for $x \in \IR$, just like \eqref{eq:depth_one_eisenstein}, it is absolutely and locally uniformly convergent in $(\tau, s)$ on its domain of definition. In particular, it gives a holomorphic function in~$s$ for $\re(s) > \frac{1}{4}$. Its analytic continuation can again be obtained from its Fourier expansion, which involves the same arithmetic part $\calL$ from \eqref{eq:arithemtic_part_defn} and a slightly modified analytic part, which we introduce next.

For $\k \in \frac{1}{2} \IZ$, $s \in \IC$ with $2 \re (s) + \k > 1$, $t \in \IR$, and $v \in \IR^+$ we define (writing $\calJ(v,t;s)$ if $\k = \frac{3}{2}$)
\begin{equation}\label{eq:Jk_definition}
\calJ_\k(v,t;s) := -2i v^{1-\k-s} e^{2 \pi t v}
\int_{-\infty}^\infty \frac{e^{-2\pi i t v x} \arctan (x)}{(1-ix)^\k \lp 1+x^2\rp^s} dx .
\end{equation}
Like $\calI_k$ from \eqref{eq:Ik_definition}, for the ranges assumed above, the function $\calJ_\k(v,t;s)$ is continuous in $(v,t,s)$ and holomorphic in $s$ because $| \arctan (x) | \leq \frac{\pi}{2}$. Using that $\arctan (z) = \frac{i}{2} ( \mathrm{Log} (1-iz) - \mathrm{Log} (1+iz) )$ and repeating the arguments from the proof of Lemma~\ref{lem:analytic_part_calI_properties}, we obtain the following analogue.

\begin{lem}\label{lem:analytic_part_calJ_properties}
Let $0 < \e < \frac{1}{8}$ be fixed, $\k \in \frac{1}{2} \IN$ with $\k \geq \frac{3}{2}$, $t\in\IR$, and $v\in\IR^+$.
\begin{enumerate}[leftmargin=*]
\item[\rm(1)] For $s \in \IC$ with $\frac{1-\k}{2} + \e \leq \re (s) \leq \frac{1}{\e}$ and $|\im (s)| \leq \frac{1}{\e}$, we have
\begin{equation*}
\calJ_\k (v,t;s)  v^{s+\k -1} e^{-2\pi t v} \ll_{\k, \e} e^{- \pi |t| v} .
\end{equation*}

\item[\rm(2)] For $t \leq 0$, we have 
\begin{equation*}
\calJ_\k (v,t;0) = 2 \pi v^{1-\k} 
\int_2^\infty \frac{e^{2 \pi t v x}}{x^\k} dx.
\end{equation*}
\end{enumerate}
\end{lem}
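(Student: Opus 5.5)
We need to prove Lemma \ref{lem:analytic_part_calJ_properties}: the bound (1) and the value at $s=0$ for $t\le 0$ in (2). The plan mirrors the proof of Lemma~\ref{lem:analytic_part_calI_properties}. We write $\arctan(x)=\frac{i}{2}\lp \mathrm{Log}(1-ix)-\mathrm{Log}(1+ix)\rp$. This extends $\arctan$ holomorphically to $\IC$ minus the cuts from $i$ to $i\infty$ and from $-i$ to $-i\infty$, with only logarithmic growth there.

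For (1), we shift the integration line in \eqref{eq:Jk_definition} to $\im(x)=-\frac12$ if $t>0$ and to $\im(x)=\frac12$ if $t\le 0$. This stays away from the branch points $\pm i$ of $(1-ix)^{-\k}$, $(1+x^2)^{-s}$ and the logarithms. The shift is justified because the integrand decays like $|x|^{-\k-2\re(s)}\log|x|$ on horizontal segments, and $\k+2\re(s)\ge 1+2\e>1$ in the assumed range. On the shifted line:
\begin{itemize}
\item $|e^{-2\pi i t v x}|=e^{-\pi |t| v}$;
\item $|\arctan(x)|\ll\log(2+|x|)$;
\item $|(1-ix)^{-\k}(1+x^2)^{-s}|\ll_{\k,\e}(1+|x|)^{-\k-2\re(s)}$, uniformly for $|\im(s)|\le\frac1\e$, since the arguments of $1\pm ix$ are bounded.
\end{itemize}
The resulting integral converges uniformly, and multiplying by $v^{s+\k-1}e^{-2\pi tv}$ gives the bound $\ll_{\k,\e}e^{-\pi|t|v}$.

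For (2), set $s=0$ and $t\le0$, so $e^{-2\pi i t v x}$ decays in the upper half-plane. We deform the contour upward onto the path $\CD$ wrapping the cut from $i$ to $i\infty$, exactly as in \eqref{eq:Ik_expression_D_integration_path}. At $s=0$ the factor $(1-ix)^{-\k}$ has no cut in the upper half-plane; only the term $-\frac{i}{2}\mathrm{Log}(1+ix)$ of $\arctan$ jumps there. The endpoint singularity is only logarithmic, so the contour can collapse onto the ray. Parametrize $x=i(1+y)$ with $y>0$. Then:
\begin{itemize}
\item $1-ix=2+y$ and $e^{-2\pi i t v x}=e^{2\pi t v(1+y)}$;
\item $1+ix=-y$, and the jump of $\mathrm{Log}(1+ix)$ across the cut is $\pm 2\pi i$.
\end{itemize}
The difference of the two sides therefore contributes
\begin{equation*}
-2i\cdot\lp-\tfrac{i}{2}\rp\cdot(\pm2\pi i)\cdot i\int_0^\infty\frac{e^{2\pi tv(1+y)}}{(2+y)^\k}dy,
\end{equation*}
where the last factor $i$ is $dx=i\,dy$. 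Multiplying by $v^{1-\k}e^{2\pi tv}$ gives $2\pi v^{1-\k}e^{4\pi tv}\int_0^\infty \frac{e^{2\pi t v y}}{(2+y)^\k}dy$. Substituting $x=2+y$ yields $2\pi v^{1-\k}\int_2^\infty \frac{e^{2\pi t v x}}{x^{\k}}dx$, as claimed.

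The main obstacle is the bookkeeping of the sign and orientation in the jump computation. We fix it by matching against Lemma~\ref{lem:analytic_part_calI_properties}(3): there $\calI'_\k(v,t;0)$ arises from $\frac{\del}{\del s}$ of $(1+x^2)^{-s}$, which produces $-\mathrm{Log}(1-ix)-\mathrm{Log}(1+ix)$. Its $(1+ix)$-part has the same jump as $-2i\arctan(x)$ (modulo holomorphic terms), namely $-2i\cdot(-\frac i2)\mathrm{Log}(1+ix)=-\mathrm{Log}(1+ix)$. Hence $\calJ_\k(v,t;0)=\calI'_\k(v,t;0)$ for $t\le0$, which confirms both the sign and the formula. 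The one subtlety is the contribution at $t=0$: the decay at infinity is then only polynomial, $|x|^{-\k}\log|x|$ with $\k\ge\frac32$, and this still suffices to justify the contour deformation.
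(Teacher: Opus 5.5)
Your proposal is correct and follows essentially the same route as the paper. The paper writes $\arctan(z)=\frac{i}{2}(\mathrm{Log}(1-iz)-\mathrm{Log}(1+iz))$ and repeats the contour arguments of Lemma~\ref{lem:analytic_part_calI_properties}: a shift by $\mp\frac{i}{2}$ for (1), and collapsing the path onto the cut from $i$ to $i\infty$ for (2). Your jump computation is sound, and the sign is $+$ because $\mathrm{Log}(1+ix)$ jumps by $+2\pi i$ from the left to the right side of the cut; your cross-check $\calJ_\k(v,t;0)=\calI'_\k(v,t;0)$ for $t\le 0$ is also correct.
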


With $\calL$ from \eqref{eq:arithemtic_part_defn} and $\calJ$ from \eqref{eq:Jk_definition}, we again apply Poisson summation to obtain (for $\re (s) > \frac{1}{4}$)
\begin{equation*}%\label{eq:depth_one_Eichler_Eisenstein_fourier_expansion}
g_{m,\mu} (\t, \bar{\tau}; s) 
=  s \sum_{n \in \IZ_{m, \mu}} \calL_{m,\mu} \lp n;s \rp 
\calJ \lp v,n;s \rp e^{2 \pi i n \t}.
\end{equation*}
As in Proposition~\ref{prop:depth_one_eisenstein_analytic_continuation}, we use this expansion along with Lemmas \ref{lem:arithmetic_part_at_origin}, \ref{lem:Lreg_Lsing_bound}, and \ref{lem:analytic_part_calJ_properties} to obtain the analytic continuation of $g_{m,\mu} (\t, \bar{\tau}; s)$, restricting $m$ to be odd and square-free.

\begin{prop}\label{prop:depth_one_Eichler_eisenstein_analytic_continuation}
Let $m \in \IN$ be odd, square-free, $\mu \in \IZ/2m\IZ$, $\t \in \IH$, and $s \in \IC$.
Then $s \mapsto g_{m,\mu} (\t, \bar{\tau}; s)$ analytically continues to $\re (s) > - \frac{1}{4}$ and $g_{m,\mu} (\t, \bar{\tau}) := g_{m,\mu} (\t, \bar{\tau}; 0)$ satisfies
\begin{equation*}
g_{m,\mu} (\t, \bar{\tau}) = 
- C_m \int_{-\bar{\t}}^{i \infty} 
\frac{\TH_{m, \mu} (w)}{(-i(w+\t))^{\frac{3}{2}}} dw .
\end{equation*}
\end{prop}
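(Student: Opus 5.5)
The plan is to mirror the proof of Proposition~\ref{prop:depth_one_eisenstein_analytic_continuation}, using the Fourier expansion of $g_{m,\mu}$ stated just before the proposition. For $\re(s) > \frac14$ we have $g_{m,\mu}(\t,\bar\t;s) = s\sum_{n} \calL_{m,\mu}(n;s)\calJ(v,n;s)e^{2\pi i n\t}$. I would insert the decomposition \eqref{eq:Lreg_Lsing_decomp_definition}, which rewrites the expansion as
\begin{multline*}
g_{m,\mu}(\t,\bar\t;s) = \frac{v^{-s-\frac12}}{\sqrt{2m}\,\s_{-2s-1}(m)\z(4s+2)}
\Bigg( s\sum_{n\in\IZ_{m,\mu}} \Lreg{m,\mu}(n;s)\, v^{s+\frac12}\calJ(v,n;s) e^{2\pi i n\t} \\
+ \frac14 \sum_{n\in\IZ_{m,\mu}^{\leqslant 0}} \Lsing{m,\mu}(n;s)\, v^{s+\frac12}\calJ(v,n;s) e^{2\pi i n\t}\Bigg).
\end{multline*}
The restriction to $n \le 0$ in the second sum is legitimate because $\Lsing{m,\mu}(n;s)$ vanishes unless $-4mn$ is a square. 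The factor $s$ in front of $g_{m,\mu}$ is exactly what cancels the $\frac{1}{4s}$ pole, which is why no $\frac1s$ bound on $\calJ$ is needed, in contrast with Lemma~\ref{lem:analytic_part_calI_properties}~(3).

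For the analytic continuation, fix $\e$ small and $s$ in a compact subset of $\re(s) > -\frac14$. Lemma~\ref{lem:analytic_part_calJ_properties}~(1) with $\k = \frac32$ (its range $\re(s)\ge -\frac14+\e$ matches) gives
\[
v^{s+\frac12}\calJ(v,n;s)e^{2\pi i n\t} \ll e^{-\pi|n|v}.
\]
Lemma~\ref{lem:Lreg_Lsing_bound} bounds $\Lreg{m,\mu}$ and $\Lsing{m,\mu}$ polynomially in $|n|$. Hence both series converge absolutely and locally uniformly, and each term is holomorphic in $s$, so both sums define holomorphic functions on $\re(s) > -\frac14$. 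The prefactor is also holomorphic there: $\z(4s+2)$ has no zeros for $\re(4s+2) > 1$, and $\s_{-2s-1}(m)$ is a finite sum with nonzero value near the relevant region. Strictly, $\s_{-2s-1}(m)=\prod_{p\mid m}(1+p^{-2s-1})$ vanishes only when $p^{-2s-1}=-1$, which requires $\re(s) = -\frac12$, so it is harmless. This gives the continuation.

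At $s=0$ the first sum disappears because of the factor $s$, since $\Lreg{m,\mu}(n;0)$ is finite. In the prefactor, $\s_{-1}(m) = \s(m)/m$, $\z(2) = \pi^2/6$, and $v^{-1/2}$ cancels against the $v^{1/2}$ inside. For $n \le 0$, Lemma~\ref{lem:analytic_part_calJ_properties}~(2) gives $\calJ(v,n;0) = 2\pi v^{-1/2}\int_2^\infty e^{2\pi n v x}x^{-3/2}\,dx$. Lemma~\ref{lem:arithmetic_part_at_origin} then turns the $\Lsing{m,\mu}(n;0)$ factors into a sum over $d\mid m$ and $k \in \IZ + \frac{\xi_{d,m}\mu}{2m}$ with $n = -mk^2$. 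The result is
\[
g_{m,\mu}(\t,\bar\t) = \frac{6m\cdot 2\pi}{4\pi^2\sqrt{2m}\,\s(m)}\, v^{-\frac12} \sum_{d\mid m}\sum_{k} e^{-2\pi i m k^2 \t}\int_2^\infty e^{-2\pi m k^2 v x}x^{-\frac32}\,dx .
\]

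The remaining and main step is to identify this with $-C_m\int_{-\bar\t}^{i\infty}\TH_{m,\mu}(w)(-i(w+\t))^{-3/2}\,dw$. I would expand $\TH_{m,\mu}(w) = \sum_{d\mid m}\sum_k e^{2\pi i m k^2 w}$ and integrate term by term along $w = -\bar\t + iy$. Then $w+\t = 2iv + iy$, so $-i(w+\t) = 2v+y$, and $e^{2\pi i mk^2 w} = e^{-2\pi i mk^2\t}e^{-2\pi mk^2(2v+y)}$. The substitution $2v + y = vx$ produces $v^{-1/2}\int_2^\infty e^{-2\pi mk^2 vx}x^{-3/2}\,dx$ together with a factor $i$ from $dw = i\,dy$. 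The constant then needs checking: $-C_m\cdot i = \frac{3}{\pi\s(m)}\sqrt{\frac m2}$, while the prefactor above is $\frac{3m}{\pi\sqrt{2m}\,\s(m)}$, and these agree. The main obstacle I expect is exactly this bookkeeping, namely signs, the $e^{\pi i/4}$ normalisation hidden in $\calL$, and the $k=0$ term. For the latter, the integral converges since $x^{-3/2}$ is integrable on $[2,\infty)$, and its multiplicity matches the $n=0$ case of Lemma~\ref{lem:arithmetic_part_at_origin}.
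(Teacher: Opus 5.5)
Your proposal is correct and follows essentially the same route as the paper: the paper likewise inserts the decomposition \eqref{eq:Lreg_Lsing_decomp_definition} into the expansion $g_{m,\mu}(\t,\bar{\t};s) = s\sum_n \calL_{m,\mu}(n;s)\calJ(v,n;s)e^{2\pi i n\t}$, gets the continuation from Lemma~\ref{lem:analytic_part_calJ_properties}~(1) and Lemma~\ref{lem:Lreg_Lsing_bound} (with the factor $s$ cancelling the $\frac{1}{4s}$), and reads off the value at $s=0$ from Lemma~\ref{lem:analytic_part_calJ_properties}~(2) and Lemma~\ref{lem:arithmetic_part_at_origin}. Your explicit matching via the substitution $2v+y=vx$ and the check $-iC_m=\frac{3}{\pi\s(m)}\sqrt{\frac{m}{2}}$ correctly supplies the bookkeeping the paper leaves implicit.
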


\section{First Observations on the Coupled Eisenstein Series}\label{sec:first_properties_coupled_eisenstein}

In this section, we investigate the coupled Eisenstein series defined in \eqref{eq:coupled_Eisenstein_definition} for $m_1,m_2 \in \IN$, $\mu_j \in \IZ/2m_j\IZ$ for $j \in \{1,2\}$, $\t \in \IH$, and $s \in \IC$ with $\re (s) > \frac{1}{4}$.
Since $|\arctan (x)| \leq \frac{\pi}{2}$ for $x \in \IR$, this series is absolutely and locally uniformly convergent in $(\tau, s)$ in its domain of definition and thus yields a holomorphic function in~$s$ for $\re(s) > \frac{1}{4}$. Note that
\begin{equation*}
\frac{c_1c_2|\t|^2+(c_1d_2+d_1c_2)u+d_1d_2}{(c_1d_2-d_1c_2)v}
=
i \frac{(c_1\t+d_1)(c_2\bar{\t}+d_2)+(c_1\bar{\t}+d_1)(c_2\t+d_2)}{(c_1\t+d_1)(c_2\bar{\t}+d_2)-(c_1\bar{\t}+d_1)(c_2\t+d_2)} .
\end{equation*}
This relation together with the fact that the symplectic form $c_1 d_2 - c_2 d_1$ in $(c_1,d_1)$ and $(c_2,d_2)$ is $\SL_2 (\IZ)$-invariant implies the modular properties of $\wh{H}_{\vc{m},\vc{\mu}} (\t, \bar{\t};s)$ as (for $M = \pmat{a & b \\ c&d} \in \SL_2(\IZ)$)
\begin{equation}\label{eq:whH_modular_transformation}
\wh{H}_{\vc{m},\vc{\mu}} \lp \frac{a\t+b}{c \t+d}, \frac{a\bar{\t}+b}{c\bar{\t}+d}; s \rp
= (c \t + d)^3 
\sum_{\substack{\nu_1 \in \IZ/2m_1\IZ \\ \nu_2 \in \IZ/2m_2\IZ}} \!\!
\Psi^*_{m_1}(M)_{\mu_1,\nu_1}  \Psi^*_{m_2}(M)_{\mu_2,\nu_2} 
\wh{H}_{\vc{m},\vc{\nu}} (\t, \bar{\t}; s) .
\end{equation}
By \eqref{eq:weil_multiplier_minus_cd_property}, we find
\begin{equation*}
\wh{H}_{\vc{m},\vc{\mu}} (\t, \bar{\t};s)
= \wh{H}_{\vc{m},(-\mu_1,\mu_2)} (\t, \bar{\t};s)
= \wh{H}_{\vc{m},-\vc{\mu}} (\t, \bar{\t};s) 
\end{equation*}
and that the contributions of $(c_1,d_1)$ and $(-c_1,-d_1)$ to \eqref{eq:coupled_Eisenstein_definition} are the same (similarly for $(c_2,d_2)$ and $(-c_2,-d_2)$).
Using this fact and splitting off the contributions where $c_1c_2=0$,
\begin{align}
\wh{H}_{\vc{m},\vc{\mu}} (\t, \bar{\t};s)
&= 
2isv^{2s} \d_{0,\mu_2} \sum_{c_1\geq 1} 
\sum_{\substack{d_1 \in \IZ \\ \gcd (c_1,d_1)=1}}
\frac{\Psi_{m_1}(M_{c_1,d_1})_{0,\mu_1}}{|c_1\t+d_1|^{2s}(c_1\t+d_1)^{\frac{3}{2}}}
\arctan \lp \frac{u+\frac{d_1}{c_1}}{v} \rp
\notag \\
&\hspace{1cm}
- 2isv^{2s} \d_{0,\mu_1} \sum_{c_2\geq 1} 
\sum_{\substack{d_2 \in \IZ \\ \gcd (c_2,d_2)=1}}
\frac{\Psi_{m_2}(M_{c_2,d_2})_{0,\mu_2}}{|c_2\t+d_2|^{2s}(c_2\t+d_2)^{\frac{3}{2}}}
\arctan \lp \frac{u+\frac{d_2}{c_2}}{v} \rp
\notag \\
&\hspace{1cm}
+2isv^{2s}
\sum_{c_1,c_2\geq 1} 
\sum_{\substack{d_1,d_2 \in \IZ \\ c_1d_2-d_1c_2 \neq 0 \\ \gcd (c_j,d_j)=1}}
\frac{\Psi_{m_1}(M_{c_1,d_1})_{0,\mu_1}}{|c_1\t+d_1|^{2s}(c_1\t+d_1)^{\frac{3}{2}}} \frac{\Psi_{m_2}(M_{c_2,d_2})_{0,\mu_2}}{|c_2\t+d_2|^{2s}(c_2\t+d_2)^{\frac{3}{2}}}
\notag \\
&\hspace{5cm} \times  
\arctan \lp \frac{c_1c_2|\t|^2+(c_1d_2+d_1c_2)u+d_1d_2}{(c_1d_2-d_1c_2)v} \rp .
\label{eq:hatH_rewriting1} 
\end{align}
The variable inside the final $\arctan$ can be rewritten as $\frac{x_1 x_2 + 1}{x_2 - x_1}$ with 
\smash{$x_j \!:=\! \frac{1}{v} (u\!+\!\frac{d_j}{c_j})$}.
Note that for $x_1,x_2 \in \IR$ with $x_1 \neq x_2$ (equivalent to $c_1d_2-d_1c_2 \neq 0$ as $c_1,c_2 \in \IN$), we have
\begin{equation}\label{eq:arctan_summation_identity}
\arctan \lp  \frac{x_1 x_2 + 1}{x_2 - x_1} \rp
=
\arctan (x_1) - \arctan (x_2) + \frac{\pi}{2} \sgn (x_2-x_1) .
\end{equation}
Using this identity in the third term of \eqref{eq:hatH_rewriting1}, we can remove the condition $c_1 d_2 - d_1c_2 \neq 0$ there, since the right-hand side of \eqref{eq:arctan_summation_identity} vanishes at $x_1 = x_2$.
This allows us to rewrite \eqref{eq:hatH_rewriting1} as (for~$\re (s) > \frac{1}{4}$) 
\begin{equation}\label{eq:hatH_rewriting2_g_f_F}
\wh{H}_{\vc{m},\vc{\mu}} (\t, \bar{\t};s)
=
g_{m_1,\mu_1} (\t, \bar{\t};s)
\wh{f}_{m_2, \mu_2} (\t, \bar{\t};s)
-
g_{m_2,\mu_2} (\t, \bar{\t};s)
\wh{f}_{m_1, \mu_1} (\t, \bar{\t};s)
+
\calF_{\vc{m},\vc{\mu}} (\t, \bar{\t};s) ,
\end{equation}
where $g$ is defined in \eqref{eq:depth_one_Eichler_eisenstein_pluswriting}, the Eisenstein series $\wh{f}$ in \eqref{eq:depth_one_eisenstein}, and we let ($m_1,m_2 \in \IN$, $\mu_j \in \IZ/2m_j\IZ$ for $j \in \{1,2\}$, $\t \in \IH$, and $s \in \IC$ with $\re (s) > \frac{1}{4}$)
\begin{equation}\label{eq:dimension_two_part_Eisenstein}
\calF_{\vc{m},\vc{\mu}} (\t, \bar{\t};s)
:=
\pi isv^{2s}
\sum_{c_1,c_2 \geq 1} 
\sum_{\substack{d_1,d_2 \in \IZ \\ \gcd (c_j,d_j)=1}} 
\sgn \lp \frac{d_2}{c_2} - \frac{d_1}{c_1} \rp 
\prod_{j \in \{1,2\}}
\frac{\Psi_{m_j}(M_{c_j,d_j})_{0,\mu_j}}{|c_j\t+d_j|^{2s} \lp c_j\t+d_j \rp^{\frac{3}{2}}} .
\end{equation}
Since $\sgn (x) \leq 1$ for $x \in \IR$, this series is absolutely and locally uniformly convergent in $\tau$ and $s$ in its domain of definition just like \eqref{eq:depth_one_eisenstein}. It yields a holomorphic function in~$s$ 
for $\re(s) > \frac{1}{4}$.
The analytic continuations of \smash{$\wh{f}$} and $g$ are given in Propositions \ref{prop:depth_one_eisenstein_analytic_continuation} and \ref{prop:depth_one_Eichler_eisenstein_analytic_continuation}, respectively. 
We therefore turn to the analytic continuation of $\calF_{\vc{m},\vc{\mu}} (\t, \bar{\t};s)$ from its definition in \eqref{eq:dimension_two_part_Eisenstein} for $\re (s) > \frac{1}{4}$.

\section{Analytic and Arithmetic Constituents of the Coupled Eisenstein Series}
\label{sec:analytic_atirhmetic_parts_coupled_eisenstein}

Before proving the analytic continuation of $\calF_{\vc{m},\vc{\mu}} (\t, \bar{\t};s)$ in Section~\ref{sec:analytic_continuation_coupled_eisenstein}, we establish the relevant analytic and arithmetic properties.

\subsection{Properties of the two-dimensional analytic part}
\label{sec:analytic_part_TwoDim}

We first consider the two-dimensional analogue of $\calI_k$ that we need in the analysis of $\calF_{\vc{m},\vc{\mu}} (\t, \bar{\t};s)$.
For $\k_1,\k_2 \in \frac{1}{2} \IZ$, $t,\w \in \IR$, $v \in \IR^+$, and $s \in \IC$ with $2 \re (s) + \min \{\k_1,\k_2\} > 1$, we define 
\begin{equation}\label{eq:Kk_definition}
\calK_{\vc{\k}} (v,t,\w;s) := \pi i v^{2-\k_1-\k_2-2s} e^{2 \pi t v}
\int_{\IR^2} \frac{\sgn (y-x) e^{-2\pi i (t -\w) v x- 2\pi i \w v y}}{(1-ix)^{\k_1} \lp 1+x^2\rp^s (1-iy)^{\k_2} \lp 1+y^2\rp^s} dx d y,
\end{equation}
which is continuous in $(v,t,\w,s)$ and holomorphic in $s$, for the ranges assumed. 
Our goal in this section is to generalize Lemma~\ref{lem:analytic_part_calI_properties} for $\calI_\k$ to $\calK_{\vc{\k}}$.
We first bound $\calK_{\vc{\k}}$.

\begin{prop}\label{prop:analytic_part_calK_properties}
Let $0 < \e < \frac{1}{8}$ be fixed, $\k_1, \k_2 \in \frac{1}{2} \IN$ with $\k_1,\k_2 \geq \frac{3}{2}$, $t, \w \in\IR$, and $v\in\IR^+$. Then, for $s \in \IC$ with $\frac{1-\min \{\k_1, \k_2\}}{2} + \e \leq \re (s) \leq \frac{1}{\e}$ and $|\im (s)| \leq \frac{1}{\e}$, we have
\begin{equation*}
\lp \calK_{\vc{\k}} (v,t,\w;s) 
- \frac{\d_{\w \not\in [0,1)}}{\lfloor \w \rfloor}  \calI_{\k_1+\k_2} (v,t;2s) \rp
v^{2s+\k_1+\k_2 -2} e^{-2\pi t v} 
\ll_{\vc{\k}, \e} \lp 1 + \frac{1}{v^2} \rp \frac{e^{- \pi |t| v}}{1+\w^2} .
\end{equation*}
\end{prop}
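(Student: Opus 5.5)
The plan is to reduce the double integral in \eqref{eq:Kk_definition} to a one-dimensional Fourier integral of a sign function. Then a single integration by parts produces the subtracted $\calI_{\k_1+\k_2}$-term, and a second one produces the $\w^{-2}$ decay. Concretely, substitute $y = x + r$. The phase becomes $-2\pi i t v x - 2\pi i \w v r$, so
\begin{equation*}
\calK_{\vc{\k}} (v,t,\w;s) = \pi i v^{2-\k_1-\k_2-2s} e^{2\pi t v} \int_{\IR} \sgn (r) e^{-2\pi i \w v r} F(r) \, dr,
\end{equation*}
where
\begin{equation*}
F(r) := \int_{\IR} \frac{e^{-2\pi i t v x}}{(1-ix)^{\k_1}(1+x^2)^s (1-i(x+r))^{\k_2}(1+(x+r)^2)^s} \, dx .
\end{equation*}
The key observation is that $F(0)$ is exactly the integral in \eqref{eq:Ik_definition} with $\k \mapsto \k_1+\k_2$ and $s \mapsto 2s$. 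Hence $v^{1-\k_1-\k_2-2s} e^{2\pi t v} F(0) = \calI_{\k_1+\k_2}(v,t;2s)$.

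First I would establish uniform bounds on $F$, $F'$ and $F''$. As in the proof of Lemma~\ref{lem:analytic_part_calI_properties}(1), shift the $x$-contour to $\IR - \frac{i}{2}$ if $t>0$ and to $\IR + \frac{i}{2}$ if $t \le 0$. For every real $r$ the branch points $\pm i$ and $-r \pm i$ stay at distance at least $\frac12$ from the shifted line. This gives the factor $e^{-\pi |t| v}$ (after accounting for $e^{2\pi t v}$), uniformly in $r$ and in $s$ in the given compact region. For decay in $r$, I would use $|1-i(x+r)| \gg 1+|x+r|$ on the shifted line, so that
\begin{equation*}
F^{(j)}(r) \ll_{\vc{\k},\e} \frac{e^{-\pi|t|v} e^{2\pi t v}}{(1+|r|)^{\k_2+2\re(s)}} \qquad \text{for } j \in \{0,1,2\}.
\end{equation*}
Differentiating in $r$ only improves the decay. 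The exponent satisfies $\k_2 + 2\re(s) \ge 1+2\e$, so $F$, $F'$ and $F''$ are all in $L^1(\IR)$ with the same $e^{-\pi|t|v}$ factor. The case $|\w| < 1$ is then immediate: bounding $\int |F| \, dr$ shows that the normalized $\calK_{\vc{\k}}$ is $\ll e^{-\pi|t|v}$, and the subtracted term is absent because $\d_{\w \notin [0,1)} = 0$ for $0 \le \w < 1$. For $-1 \le \w < 0$ the indicator is $1$ but $\lfloor \w \rfloor = -1$, so the subtracted term is bounded by Lemma~\ref{lem:analytic_part_calI_properties}(1) and the factor $v^{-1}$ discussed below.

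For $|\w| \ge 1$, integrate by parts on $(-\infty,0)$ and $(0,\infty)$ separately. The jump of $\sgn$ at $r=0$ gives the boundary term $\frac{2F(0)}{2\pi i \w v}$. Multiplied by the prefactor $\pi i v^{2-\k_1-\k_2-2s}e^{2\pi t v}$, this is exactly $\frac{1}{\w}\calI_{\k_1+\k_2}(v,t;2s)$. The remaining integral $\frac{1}{2\pi i \w v} \int \sgn(r) e^{-2\pi i \w v r} F'(r) \, dr$ is integrated by parts once more. This yields a boundary term $\propto \frac{F'(0)}{\w^2 v^2}$ and an integral of $F''$ also carrying $\frac{1}{\w^2 v^2}$. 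After multiplying by $v^{2s+\k_1+\k_2-2}e^{-2\pi t v}$, both are $\ll \frac{e^{-\pi|t|v}}{v^2 \w^2}$ by the bounds above.

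Finally, replacing $\frac{1}{\w}$ by $\frac{1}{\lfloor \w \rfloor}$ costs $O(\w^{-2})$ times the normalized $\calI$-term. By Lemma~\ref{lem:analytic_part_calI_properties}(1), applied with $\k_1+\k_2$ and $2s$ (the range hypothesis holds), the normalized $\calI$-term is
\begin{equation*}
v^{-1}\cdot \calI_{\k_1+\k_2}(v,t;2s)\, v^{2s+\k_1+\k_2-1} e^{-2\pi t v} \ll v^{-1} e^{-\pi|t|v} .
\end{equation*}
Since $v^{-1} \le 1+v^{-2}$, this is absorbed into the claimed bound. The main obstacle is the uniform control of $F$, $F'$ and $F''$ in $r$ (decay), in $t$ (the $e^{-\pi|t|v}$ factor) and in $s$ simultaneously. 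This is what justifies the boundary terms at $\pm\infty$ vanishing and the convergence of the integrals. I would handle it by the single $r$-independent contour shift described above, checking that the branch cuts of $(1+(x+r)^2)^{s}$ never cross the shifted line.
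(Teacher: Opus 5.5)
Your proof is correct. It is a mild but real reorganization of the paper's argument rather than the same proof.

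\emph{The paper's route.} It keeps $x$ fixed and integrates by parts twice in $y$, but only against $e^{-2\pi i \lfloor \w \rfloor v y}$, absorbing $e^{-2\pi i \{\w\} v y}$ into the amplitude. This yields the main term $\frac{1}{\lfloor \w \rfloor}\calI_{\k_1+\k_2}(v,t;2s)$ exactly and uniformly in $\w$. The price is the $\{\w\} v$ terms in the remainder $\phi_{\k_2}$. The factor $e^{-\pi|t|v}$ then comes from shifting the outer $x$-contour, using holomorphy of $\phi_{\k_2}$ in $|\im(z)|<1$.

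\emph{Your route.} You pass to $r = y-x$, so everything becomes a one-variable Fourier integral of $\sgn(r)F(r)$, with $F(0)$ visibly the $\calI_{\k_1+\k_2}$ integral. You integrate by parts against the full phase to get $\frac{1}{\w}\calI_{\k_1+\k_2}(v,t;2s)$. You then pay for the swap $\frac{1}{\w}\to\frac{1}{\lfloor\w\rfloor}$ with Lemma~\ref{lem:analytic_part_calI_properties}~(1), at a cost of $O(v^{-1}\w^{-2} e^{-\pi|t|v})$. This needs only one $r$-independent contour shift, but requires treating $|\w|<1$ separately.

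\emph{What the paper's version buys.} It produces the pointwise-in-$x$ identity \eqref{eq:details_analytic_part_calK_properties_intParts} together with the bound \eqref{eq:integral_phi_kappa_bound}. These are reused in the proof of Proposition~\ref{prop:dimension_two_part_Eisenstein_fourier_expansion} to remove the symmetric summation over $k$ and interchange it with the $x$-integral.

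Two bookkeeping slips should be fixed.
\begin{itemize}
\item The contour shift gives $F^{(j)}(r) \ll e^{-\pi|t|v}(1+|r|)^{-\ldots}$, with no factor $e^{2\pi t v}$. The prefactor of $\calK_{\vc{\k}}$ and the normalization in the statement cancel to $\pi i$. So this is the bound you need, and in fact the one you use at the end.
\item The decay in $r$ that the shifted contour gives is $(1+|r|)^{-\min\{\k_1,\k_2\}-2\re(s)}$, not $(1+|r|)^{-\k_2-2\re(s)}$. The region $x \approx -r$ only decays like $(1+|r|)^{-\k_1-2\re(s)}$. This is harmless: the exponent is still at least $1+2\e$, so $F, F', F''\in L^1(\IR)$ uniformly.
\end{itemize}
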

\begin{proof}
Integrating by parts twice, we obtain (for $x \in \IR$)
\begin{equation}\label{eq:details_analytic_part_calK_properties_intParts}
\int_{-\infty}^\infty \frac{\sgn (y-x) e^{-2\pi i \w v y}}{(1-iy)^{\k_2} \lp 1+y^2\rp^s} d y
=
\frac{\d_{\w \not\in [0,1)}}{\pi i \lfloor \w \rfloor v}
\frac{e^{-2\pi i \w v x}}{(1-ix)^{\k_2} \lp 1+x^2\rp^s}
+ e^{-2\pi i \w v x} \phi_{\k_2} (v,\w; x,s),
\end{equation}
where
\begin{align}\label{eq:details_analytic_part_calK_properties_intParts2}
&\phi_{\k_2} (v,\w; x,s) := \frac{i \d_{\w \not\in [0,1)}}{2 \pi^2 \lfloor \w \rfloor^2 v^2} 
\frac{1}{(1-ix)^{\k_2} \lp 1+x^2\rp^s}
\lp 2 \pi \{\w \} v - \frac{\k_2+s}{1-ix} + \frac{s}{1+ix}  \rp
\\
& 
+ \int_{-\infty}^\infty \frac{\sgn (y) e^{-2\pi i \w v y}}{(1-i(x+y))^{\k_2+s} (1+i(x+y))^{s}} 
\Bigg(
\frac{\d_{\w \not\in [0,1)}}{4 \pi^2 \lfloor \w \rfloor^2 v^2} 
\lp 2 \pi \{\w \} v  - \frac{\k_2+s}{1-i(x+y)}  + \frac{s}{1+i(x+y)}\rp^2
\notag \\  & \hspace{5.3cm}
+\frac{\d_{\w \not\in [0,1)}}{4 \pi^2 \lfloor \w \rfloor^2 v^2} 
\lp \frac{\k_2+s}{(1-i(x+y))^2} + \frac{s}{(1+i(x+y))^2} \rp
+ \d_{\w \in [0,1)} 
\Bigg) d y .
\notag 
\end{align}
Plugging this into \eqref{eq:Kk_definition}, we find
\begin{equation*}
\calK_{\vc{\k}} (v,t,\w;s) 
=
\frac{\d_{\w \not\in [0,1)}}{\lfloor \w \rfloor}  \calI_{\k_1+\k_2} (v,t;2s)
+ \pi i v^{2-\k_1-\k_2-2s} e^{2 \pi t v}
\int_{-\infty}^\infty \frac{e^{-2\pi i t v x} \phi_{\k_2} (v,\w; x,s) }{(1-ix)^{\k_1} \lp 1+x^2\rp^s} dx .
\end{equation*}
So the quantity to be bounded is
\begin{equation}\label{eq:details_calX_integral_expression}
\pi i  \int_{-\infty}^\infty 
\frac{e^{-2\pi i t v x} \phi_{\k_2} (v,\w; x,s) }{(1-ix)^{\k_1} \lp 1+x^2\rp^s} dx .
\end{equation}
For the values of $\k_2,v,\w,s$ assumed in the proposition, 
the function $z \mapsto \phi_{\k_2} (v,\w; z,s)$ defined by \eqref{eq:details_analytic_part_calK_properties_intParts2} (letting $x \mapsto z \in \IC$) is holomorphic for $z \in \IC$ with $|\im (z)| < 1$.
Also, for any fixed $0 < \d < 1$, 
\begin{equation}\label{eq:integral_phi_kappa_bound}
\phi_{\k_2} (v,\w; z,s) \ll_{\k_2, \e, \d}
\lp 1 + \frac{1}{v^2} \rp \frac{1}{1+\w^2}
\quad \mbox{for } |\im (z)| \leq 1-\d .
\end{equation}
This follows by bounding \eqref{eq:details_analytic_part_calK_properties_intParts2} as
\begin{equation*}
\phi_{\k_2} (v,\w; z,s) \ll_{\k_2, \e, \d}
\frac{v+1}{v^2} \frac{1}{1+\w^2} 
+ 
\lp 1 + \frac{1}{v^2} \rp \frac{1}{1+\w^2}
\int_{-\infty}^\infty 
\frac{1}{\left| (1-i(z+y))^{\k_2+s} (1+i(z+y))^{s} \right|} dy
\end{equation*}
and then noting that the remaining integral is $\ll_{\k_2, \e, \d} 1$.
Using the holomorphy of $z \mapsto \phi_{\k_2} (v,\w; z,s)$ for $|\im (z)| < 1$ and the bound \eqref{eq:integral_phi_kappa_bound} with $\d=\frac{1}{2}$, we can now argue as in Lemma~\ref{lem:analytic_part_calI_properties}~(1) to shift the integration path in \eqref{eq:details_calX_integral_expression} by $-\frac{i}{2}$ if $t > 0$ and by $\frac{i}{2}$ if $t \leq 0$. This proves the claimed bound.
\end{proof}

We now determine the value of $\calK_{\vc{\k}} (v,t,\w;s)$ for $s=0$.

\begin{prop}\label{prop:analytic_part_calK_value_at_zero}
Let $\k_1, \k_2 \in \IN + \frac{1}{2}$, $t, \w \in\IR$, and $v\in\IR^+$. Then we have
\begin{equation*}
\calK_{\vc{\k}} (v,t,\w;0) \!=\!
\begin{cases}
- \frac{(2\pi)^{\k_1+\k_2}}{\GG (\k_1+\k_2)} \frac{t^{\k_1+\k_2-1}}{t-\w}
{}_2F_1 \lp 1,\k_1; \k_1+\k_2; \frac{t}{t-\w} \rp
&\!\!\mbox{if } t>0 \mbox{ and } \w \leq 0,
\vspace{1ex} \\ 
\frac{(2\pi)^{\k_1+\k_2}}{\GG (\k_1+\k_2)} \frac{t^{\k_1+\k_2-1}}{\w}
{}_2F_1 \lp 1,\k_2; \k_1+\k_2; \frac{t}{\w} \rp
&\!\!\mbox{if } t>0 \mbox{ and } \w \geq t,
\vspace{1ex} \\
- \frac{(\k_1+\k_2-1)(2\pi)^{\k_1+\k_2}}{(\k_2-1) \GG (\k_1+\k_2)} t^{\k_1+\k_2-2}
{}_2F_1 \lp 1,2-\k_1-\k_2; 2-\k_2; \frac{\w}{t} \rp
&\!\!\mbox{if } t>0 \mbox{ and } 0 < \w < t,
\vspace{1ex} \\
0 &\!\!\mbox{if } t \leq 0 .
\end{cases} 
\end{equation*} 
\end{prop}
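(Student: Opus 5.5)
The plan is to evaluate the double integral in \eqref{eq:Kk_definition} at $s=0$ by writing both denominators as Gamma-type integrals. The $x$- and $y$-integrations then become Fourier transforms of $\sgn(y-x)$, and everything collapses to a one-dimensional Euler-type integral that produces the hypergeometric functions. At $s=0$ only the factors $(1-ix)^{-\k_1}(1-iy)^{-\k_2}$ remain; since $\k_j\geq\frac32$, the double integral converges absolutely.

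\textbf{Step 1 (regularization and Gamma representation).} To justify the manipulations, I would insert a damping factor $e^{-\eta|y-x|}$ with $\eta>0$. By dominated convergence the original integral is the limit $\eta\to0^+$ of the damped one. I then use
\begin{equation*}
(1-ix)^{-\k}=\frac{1}{\GG(\k)}\int_0^\infty a^{\k-1}e^{-a(1-ix)}\,da
\end{equation*}
for both factors and change variables to $x$ and $r:=y-x$. The $x$-integral becomes $\int_{\IR}e^{ix(a+b-2\pi tv)}\,dx$, which should be smoothed, for instance by a Fejér-type or Gaussian cutoff removed last. In the limit it forces $a+b=2\pi tv$. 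The $r$-integral is $\int_{\IR}\sgn(r)e^{-\eta|r|}e^{ir(b-2\pi\w v)}\,dr=\frac{2i(b-2\pi\w v)}{(b-2\pi\w v)^2+\eta^2}$, which tends to $2i\,\mathrm{P.V.}\,\frac{1}{b-2\pi\w v}$.

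\textbf{Step 2 (collapse and the case $t\le0$).} The constraint $a+b=2\pi tv$ with $a,b\ge0$ has no solutions if $t<0$, so $\calK_{\vc\k}(v,t,\w;0)=0$ there. The case $t=0$ follows by continuity in $t$, or directly by the upward contour deformation of Lemma~\ref{lem:analytic_part_calI_properties}~(2) applied to the $x$-integral. For $t>0$, the factor $e^{-(a+b)}=e^{-2\pi tv}$ cancels the prefactor $e^{2\pi tv}$, and substituting $b=2\pi tv\,u$ gives
\begin{equation*}
\calK_{\vc\k}(v,t,\w;0)=-\frac{(2\pi)^{\k_1+\k_2}t^{\k_1+\k_2-2}}{\GG(\k_1)\GG(\k_2)}\,\mathrm{P.V.}\!\int_0^1\frac{u^{\k_2-1}(1-u)^{\k_1-1}}{u-\w/t}\,du .
\end{equation*}
The powers of $v$ cancel, and the constant is to be checked against the claimed formulas.

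\textbf{Step 3 (identifying the hypergeometric functions).} The next step is to evaluate this integral in each range of $\w$, using Euler's integral representation of ${}_2F_1$ throughout.
\begin{itemize}
\item If $\w\ge t$, then $z:=\w/t\geq1$ and there is no singularity inside $[0,1)$. Expanding $\frac{1}{u-z}=-\frac1z(1-u/z)^{-1}$ gives $\frac{1}{\w}{}_2F_1(1,\k_2;\k_1+\k_2;\frac t\w)$, with $B(\k_1,\k_2)$ combining with the Gammas into $\GG(\k_1+\k_2)^{-1}$.
\item If $\w\le0$, the substitution $u\mapsto1-u$ gives the formula with $\frac{t}{t-\w}$ and $\k_1,\k_2$ interchanged.
\end{itemize}

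\textbf{Step 4 (main obstacle: the case $0<\w<t$).} The hard case is $0<z<1$, where the integral is a genuine principal value. I would write it as $\re$ of the boundary value $z\pm i0$ of the analytic function $F(z):=-\frac1z B(\k_1,\k_2)\,{}_2F_1(1,\k_2;\k_1+\k_2;1/z)$. I then apply the connection formula relating ${}_2F_1(\cdot;1/z)$ to ${}_2F_1(1,2-\k_1-\k_2;2-\k_2;z)$ plus a term proportional to $(-z)^{\k_2-1}(1-z)^{\k_1-1}$.

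Because $\k_1,\k_2\in\IN+\frac12$, this branch term should be purely imaginary on the cut, up to sign. Taking real parts should therefore leave exactly the regular term. Its coefficient $\frac{\k_1+\k_2-1}{\k_2-1}$ should come out of $\GG(\k_1+\k_2-1)/(\GG(\k_2-1)\GG(\k_1))$-type ratios combined with the prefactor.

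A consistency check is continuity in $\w$ at $\w=0$ and at $\w=t$; this matching may also serve as an alternative proof route. Finally I would have to verify the overall sign of $\pi i\cdot 2i=-2\pi$ against the stated cases.
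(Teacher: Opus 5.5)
Your proposal is correct, and it lands on the same one-dimensional integral as the paper. Substituting $\theta=\omega-tu$ in the paper's $\mathrm{PV}\int_{\omega-t}^{\omega}\theta^{-1}(t-\omega+\theta)^{\kappa_1-1}(\omega-\theta)^{\kappa_2-1}\,d\theta$ gives $-t^{\kappa_1+\kappa_2-2}\,\mathrm{PV}\int_0^1\frac{u^{\kappa_2-1}(1-u)^{\kappa_1-1}}{u-\omega/t}\,du$, so your Step~2 formula coincides with theirs, constant and sign included.

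The routes to that integral differ. The paper regularizes only the sign function, via $\mathrm{sgn}(y-x)=\frac{2}{\pi}\lim_{k\to\infty}\mathrm{Si}(2\pi kv(y-x))$ with $\mathrm{Si}$ bounded. It expands $\mathrm{Si}$ as a $\theta$-integral of $\sin(2\pi v(y-x)\theta)/\theta$, after which the $(x,y)$-integral factors into two one-variable transforms evaluated by Lemma~\ref{lem:analytic_part_calI_properties}~(2); every interchange there is absolutely convergent. Your Gamma representation is the Fourier-dual of that lemma. However, it turns the $x$-integral into a delta function and so forces a second cutoff. This is fine: with $e^{-\eta|r|-\delta x^2}$, Fubini holds in $(a,b,x,r)$, and the damped integrals tend to the undamped one jointly in $(\eta,\delta)$ by dominated convergence. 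So the order in which you remove the cutoffs is irrelevant; it is just more bookkeeping.

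At the evaluation stage, the paper quotes a Hilbert-transform entry from Bateman's tables with $\cot(\pi\kappa_2)=0$ and calls the cases $\omega\le 0$ and $\omega\ge t$ similar. Your Steps~3--4 derive everything instead. The $1/z$ connection formula yields the regular term with coefficient $\frac{\Gamma(\kappa_1+\kappa_2)\Gamma(\kappa_2-1)}{\Gamma(\kappa_2)\Gamma(\kappa_1+\kappa_2-1)}=\frac{\kappa_1+\kappa_2-1}{\kappa_2-1}$. On $z=x+i0$ the branch term equals $-\frac{\pi}{\sin(\pi\kappa_2)}e^{-\pi i\kappa_2}x^{\kappa_2-1}(1-x)^{\kappa_1-1}$. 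Its imaginary part is the Plemelj jump $\pi x^{\kappa_2-1}(1-x)^{\kappa_1-1}$, and its real part $-\pi\cot(\pi\kappa_2)\,x^{\kappa_2-1}(1-x)^{\kappa_1-1}$ is exactly the extra term in the table entry.

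So your version is self-contained and shows precisely where the half-integrality of $\kappa_2$ enters. The cost is a somewhat heavier justification of the limit interchanges in the regularization step.
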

\begin{proof}
We let $s=0$ in \eqref{eq:Kk_definition} and replace $\sgn (y-x)$ in the integral with (for $k \in \IN$)
\begin{equation*}
\sgn (y-x) = \frac{2}{\pi} \lim_{k \to \infty} \mathrm{Si}(2 \pi k v (y-x)) ,
\end{equation*}
where \smash{$\mathrm{Si} (x) := \int_0^x \frac{\sin (\th)}{\th} d \th$} is the \textit{sine integral}.
Since $|\mathrm{Si} (x)| \leq \mathrm{Si} (\pi)$ for all $x \in \IR$, by the dominated convergence theorem, we can commute the limit over $k$ with the integral and find 
\begin{equation}\label{eq:details_analytic_part_calK_value_at_zero1}
\calK_{\vc{\k}} (v,t,\w;0) 
= 
2 i v^{2-\k_1-\k_2} e^{2 \pi t v} 
\lim_{k \to \infty} 
\int_{\IR^2} \!
\frac{e^{-2\pi i (t -\w) v x- 2\pi i \w v y}}{(1-ix)^{\k_1} (1-iy)^{\k_2}} 
\int_0^{k} \frac{\sin (2 \pi v (y-x) \th)}{\th} d \th
dx d y.
\end{equation}
Since $|y-x| \leq 2 \max \{|x|,|y|\}$, we have
\begin{equation*}
\int_0^{k} \! \frac{|\sin (2 \pi v (y-x) \th)|}{\th} d \th
\leq
\int_0^{1+2 \pi k v |y-x| } \! \frac{| \sin (\th) |}{\th} d \th
\leq
\mathrm{Si}(1) + \log \lp 1+4 \pi k v |x| \rp + \log \lp 1+4 \pi k v |y| \rp .
\end{equation*}
So the iterated integral over $x,y,\th$ in \eqref{eq:details_analytic_part_calK_value_at_zero1} is absolutely convergent and we can interchange the integrals over $\th$ and $(x,y)$. Then we use \eqref{eq:Ik_definition} and
Lemma~\ref{lem:analytic_part_calI_properties}~(2) to evaluate 
\begin{multline}
\calK_{\vc{\k}} (v,t,\w;0) 
= 
\frac{(2\pi)^{\k_1+\k_2}}{\GG (\k_1) \GG (\k_2)}
\int_0^{\infty} \! \frac{1}{\th} 
\big( (t-\w+\th)^{\k_1-1} (\w-\th)^{\k_2-1} \d_{\w-t < \th < \w}
\\
- (t-\w-\th)^{\k_1-1} (\w+\th)^{\k_2-1} \d_{- \w < \th < t-\w} \big) d \th .
\label{eq:details_analytic_part_calK_value_at_zero4}
\end{multline}
Note that if $t \leq 0$, then the integrand is identically zero and we have $\calK_{\vc{\k}} (v,t,\w;0)=0$ as claimed.

Suppose next that $t > 0$ and $0 < \w < t$. For any $0 < \e < \min \{\w, t-\w\}$, we rewrite \eqref{eq:details_analytic_part_calK_value_at_zero4} as
\begin{multline*}
\calK_{\vc{\k}} (v,t,\w;0)
=
\frac{(2\pi)^{\k_1+\k_2}}{\GG (\k_1) \GG (\k_2)} \Bigg(
\int_0^{\e} \frac{(t-\w+\th)^{\k_1-1} (\w-\th)^{\k_2-1} - (t-\w-\th)^{\k_1-1} (\w+\th)^{\k_2-1}}{\th}  d \th 
\\
+ 
\lp \int_{\w-t}^{-\e} + \int_{\e}^{\w} \rp 
\frac{(t-\w+\th)^{\k_1-1} (\w-\th)^{\k_2-1}}{\th}  d \th 
\Bigg)  .
\end{multline*}
Since $\th \mapsto (t-\w+\th)^{\k_1-1} (\w-\th)^{\k_2-1}$ is differentiable at $\th = 0$ because $t-\w > 0$ and $\w > 0$, the first term vanishes as $\e \to 0^+$ and the second term yields a Cauchy principal value integral as
\begin{equation*}
\calK_{\vc{\k}} (v,t,\w;0) = 
\frac{(2\pi)^{\k_1+\k_2}}{\GG (\k_1) \GG (\k_2)}  
\mathrm{PV}\int_{\w-t}^{\w} \frac{(t-\w+\th)^{\k_1-1} (\w-\th)^{\k_2-1}}{\th}d\th .
\end{equation*}
The resulting integral is a Hilbert transform, which can be evaluated using \cite[Chapter XV (33)]{Bat}, while noting that $\cot (\pi \k_2) = 0$ because $\k_2 \in \IN + \frac{1}{2}$. This yields the proposition for the case $t > 0$ and $0 < \w < t$. The remaining cases with $t > 0$ (where $\w \leq 0$ or $\w \geq t$) follow similarly.
\end{proof}

\begin{rem}\label{rem:calK_zero_value_3_over_2}
If $\k_1 = \k_2 = \frac{3}{2}$, then we use
\begin{equation*}
{}_2F_1 \lp 1,\frac{3}{2}; 3; x \rp = \frac{4}{x^2} \lp 2-x-2 \sqrt{1-x} \rp
\andd
{}_2F_1 \lp 1,-1; \frac{1}{2}; x \rp = 1-2x
\end{equation*}
to specialize the results of Proposition \ref{prop:analytic_part_calK_value_at_zero} as follows:\footnote{Note that here and throughout we drop the subscript $\vc{\k}$ if $\k_1 = \k_2 = \frac{3}{2}$.}
\begin{equation}\label{eq:calK_3_2_values_at_zero}
\calK (v,t,\w;0) \!=\!
\begin{cases}
16 \pi^3 \lp 2 \w - t + 2\sqrt{\w (\w-t)} \rp
&\mbox{if } t>0 \mbox{ and } \w \leq 0,
\vspace{1ex} \\ 
16 \pi^3 \lp 2 \w - t - 2\sqrt{\w (\w-t)} \rp
&\mbox{if } t>0 \mbox{ and } \w \geq t,
\vspace{1ex} \\
16 \pi^3 \lp 2 \w - t \rp
&\mbox{if } t>0 \mbox{ and } 0 < \w < t,
\vspace{1ex} \\
0 &\mbox{if } t \leq 0 .
\end{cases} 
\end{equation} 
\end{rem}

We finally generalize Lemma~\ref{lem:analytic_part_calI_properties}~(3) and develop the part of $\calK_{\vc{\k}} (v,t,\w;s)$ that leads to iterated Eichler integrals of modular forms appearing in completions of depth two mock modular forms.

\begin{prop}\label{prop:analytic_part_calK_properties_negative}
Let $0 < \e < \frac{1}{8}$ be fixed, $\k_1, \k_2 \in \frac{1}{2} \IN$ with $\k_1,\k_2 \geq \frac{3}{2}$, $t, \w \in\IR$ with $\w, t-\w \leq 0$, and $v\in\IR^+$. 
Then for $s \in \IC$ with $\frac{1-\min \{\k_1, \k_2\}}{2} + \e \leq \re (s) \leq \frac{1}{\e}$ and $|\im (s)| \leq \frac{1}{\e}$, we have
\begin{equation*}
\frac{1}{s} \calK_{\vc{\k}} (v,t,\w;s) v^{2s+\k_1+\k_2 -2} e^{-2\pi t v} 
\ll_{\vc{\k}, \e} e^{- \pi |t| v} .
\end{equation*}
Furthermore, we have
\begin{equation*}
\calK'_{\vc{\k}} (v,t,\w;0) 
= 4 \pi^2 v^{2-\k_1-\k_2} 
\lp  
\int_2^\infty \! \frac{e^{2 \pi \w v x}}{x^{\k_2}} 
\int_x^\infty \! \frac{e^{2 \pi (t-\w) v y}}{y^{\k_1}} dy  dx
-
\int_2^\infty \! \frac{e^{2 \pi (t-\w) v x}}{x^{\k_1}} 
\int_x^\infty \! \frac{e^{2 \pi \w v y}}{y^{\k_2}} dy  dx
\rp .
\end{equation*}
\end{prop}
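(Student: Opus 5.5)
The plan is to mimic the proof of Lemma~\ref{lem:analytic_part_calI_properties}~(3): when $\w \le 0$ and $t-\w\le 0$, both one-dimensional integrals in \eqref{eq:Kk_definition} have exponentials $e^{-2\pi i(t-\w)vx}$ and $e^{-2\pi i \w v y}$ that decay in the upper half-plane. We therefore deform both contours upward onto the branch cuts from $i$ to $i\infty$. First assume $|s|\le\frac18$; for $|s|>\frac18$ the bound follows directly from Proposition~\ref{prop:analytic_part_calK_properties}. In that case the term $\frac{\d_{\w\notin[0,1)}}{\lfloor\w\rfloor}\calI_{\k_1+\k_2}(v,t;2s)$ also carries a factor $\sin(2\pi s)$, because $t\le 0$. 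By Lemma~\ref{lem:analytic_part_calI_properties}~(3) it is therefore harmless after dividing by $s$.

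For $|s|\le\frac18$, the obstacle is the factor $\sgn(y-x)$, which couples the two variables and prevents a naive independent contour shift. The first step is to remove it using the decomposition in \eqref{eq:details_analytic_part_calK_properties_intParts}. An alternative, which I would try first, is to write $\sgn(y-x)=\sgn(y-x)$ for $x,y$ on the common line $\re(\cdot)$ and to observe that the function
\[
G(x):=\int_{\IR}\frac{\sgn(y-x)\,e^{-2\pi i\w v y}}{(1-iy)^{\k_2}(1+y^2)^s}\,dy
\]
extends holomorphically in $x$ to the upper half-plane away from the cut. One can see this by writing $G(x)=\int_x^{\infty}-\int_{-\infty}^x$, with the endpoint $x$ complex, and shifting the $y$-path upward as well (it decays for $\w\le0$). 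One then deforms the $x$-integral onto the cut $\CD$ of Figure~\ref{fig:main_int_path_D}. Parametrizing by $x=i(1+X)$ and $y=i(1+Y)$ with $X,Y\ge0$, every piece acquires a jump factor $2\sin(\pi s)$ from crossing the cut of $(1+iw)^{-s}$. This gives a representation
\[
\calK_{\vc{\k}}(v,t,\w;s)=\sin(\pi s)\,v^{2-\k_1-\k_2-2s}e^{2\pi tv}\bigl(A(s)+\sin(\pi s)B(s)\bigr),
\]
with $A,B$ explicit absolutely convergent double integrals over $X,Y\in[0,\infty)$. These integrals contain $e^{2\pi(t-\w)vX}e^{2\pi\w vY}$ times powers of $(2+X)$, $(2+Y)$, $X$, $Y$, and the ordering $\sgn(Y-X)$ is inherited from the complex endpoint.

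From this representation the bound follows as in Lemma~\ref{lem:analytic_part_calI_properties}~(3). The exponentials have nonpositive exponents, and they yield the factor $e^{-\pi|t|v}$ after combining with $e^{2\pi tv}$; one uses $t\le0$ together with a shift of $X,Y$ by $1$ inside $(2+X)$, giving $e^{4\pi tv}$-type factors. The singularities $X^{-s}$, $Y^{-s}$ are integrable since $|\re(s)|\le\frac18$. Since $\sin(\pi s)/s$ is bounded, the estimate on $\frac1s\calK_{\vc{\k}}$ follows.

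For the derivative at $s=0$, we differentiate: $\calK'_{\vc{\k}}(v,t,\w;0)=\pi\,v^{2-\k_1-\k_2}e^{2\pi tv}A(0)$. At $s=0$ the only surviving branch cuts are those of $(1-ix)^{-\k_1}$ and $(1-iy)^{-\k_2}$ in the lower half-plane, so the upper cut contributes only through the $\sin(\pi s)$ jump. The factor $\pi i$ and the jump combine to $4\pi^2$ after the substitution $x\mapsto 2+X$. Making this substitution in $A(0)$ (so the variables run over $[2,\infty)$) and absorbing $e^{2\pi tv}$ into the exponents, the ordering $\sgn(Y-X)$ produces exactly the difference of the two nested integrals
\[
\int_2^\infty\frac{e^{2\pi\w vx}}{x^{\k_2}}\int_x^\infty\frac{e^{2\pi(t-\w)vy}}{y^{\k_1}}\,dy\,dx-\int_2^\infty\frac{e^{2\pi(t-\w)vx}}{x^{\k_1}}\int_x^\infty\frac{e^{2\pi\w vy}}{y^{\k_2}}\,dy\,dx .
\]
I expect the main difficulty to be the bookkeeping of the joint deformation: checking that no residual terms arise from where the two cuts meet (the diagonal $x=y$ on $\CD$). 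The approach is to do the $y$-deformation first with $x$ fixed on $\CD$, and then verify that the contributions with the jump on only one variable carry the $\sin(\pi s)$ factor needed.
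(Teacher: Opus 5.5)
\paragraph{Case $|s|\le\frac18$: correct, and a different route from the paper.}
Your argument here is sound, but it organizes the contour deformation differently. The paper writes $\sgn(y-x)=\d_{y>x}-\d_{x>y}$, so that $\calK_{\vc{\k}}=\calN_{\vc{\k}}(v,t,\w;s)-\calN_{\k_2,\k_1}(v,t,t-\w;s)$. It deforms each outer integral onto $\CD$ and uses the monodromy relation \eqref{eq:calY_monodromy} of $\mathcal{Y}_{\k_2}$. Each half then carries a term $\frac{\pi e^{\pi i s}}{2\sin(\pi s)}\calI_{\k_1}(v,t-\w;s)\calI_{\k_2}(v,\w;s)$ of exact order $s$, which would change $\calK'_{\vc{\k}}(v,t,\w;0)$. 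It disappears only because it is symmetric and cancels between the two halves.

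Your $G(x)=\int_x^{+\infty}f-\int_{-\infty}^x f$ keeps both orderings together, and the bookkeeping you defer is short. Rotate the $+\infty$ end to $i\infty$ on the right of the cut, and the $-\infty$ end to $i\infty$ on the left. Set $\phi(Y):=e^{2\pi\w v(1+Y)}(2+Y)^{-\k_2-s}Y^{-s}$, $\Phi_<(X):=\int_0^X\phi$ and $\Phi_>(X):=\int_X^\infty\phi$. The boundary values at $x=i(1+X)$ from the right and left are
\[
G^{\pm}(X)=i\bigl(e^{\mp\pi i s}(\Phi_>-\Phi_<)+e^{\pm\pi i s}(\Phi_<+\Phi_>)\bigr).
\]
The outer factor $(1+ix)^{-s}$ contributes $e^{\mp\pi i s}$, so the integrand on $\CD$ involves $e^{-\pi i s}G^+-e^{\pi i s}G^-=2\sin(2\pi s)(\Phi_>-\Phi_<)$. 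This gives directly
$\calK_{\vc{\k}}=-2\pi\sin(2\pi s)\,v^{2-\k_1-\k_2-2s}e^{4\pi tv}\iint_{[0,\infty)^2}\sgn(Y-X)\frac{e^{2\pi(t-\w)vX+2\pi\w vY}}{(2+X)^{\k_1+s}X^s(2+Y)^{\k_2+s}Y^s}\,dX\,dY$.
That is exactly the paper's final formula, with no monodromy relation and no cancellation of product terms to track, so your route is slightly more economical.

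Two corrections to how you describe it:
\begin{itemize}
\item The heuristic ``every piece acquires a jump factor $2\sin(\pi s)$'' is not what happens. The full-ray pieces contribute $i(\Phi_<+\Phi_>)$ from each side; they are $O(1)$ individually and cancel exactly. The surviving factor is $\sin(2\pi s)$, and nothing special happens on the diagonal.
\item You need $G$ bounded in the closed upper half-plane (the analogue of \eqref{eq:calY_bound_upper_half_plane}) to discard the arcs. For $t=\w$ or $\w=0$ there is no exponential decay to rely on.
\end{itemize}

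\paragraph{Case $|s|>\frac18$: a gap in uniformity in $v$.}
Proposition~\ref{prop:analytic_part_calK_properties} only gives $\ll(1+v^{-2})e^{-\pi|t|v}$ for the difference. The subtracted term $\frac{\d_{\w\notin[0,1)}}{\lfloor\w\rfloor}\calI_{\k_1+\k_2}(v,t;2s)$ contributes $\ll v^{-1}e^{-\pi|t|v}$ after multiplying by $v^{2s+\k_1+\k_2-2}e^{-2\pi tv}$. This is because Lemma~\ref{lem:analytic_part_calI_properties}~(1) controls it with the normalization $v^{2s+\k_1+\k_2-1}$. So your citation does not give the stated bound, whose implied constant depends only on $\vc{\k},\e$ and hence is uniform in $v$.

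The negative powers of $v$ come from the integration by parts. That step exists only to produce the decay $\frac{1}{1+\w^2}$, which is not needed here. Instead, substitute $y\mapsto x+y$ to write
$\calK_{\vc{\k}}v^{2s+\k_1+\k_2-2}e^{-2\pi tv}=\pi i\int_{\IR}e^{-2\pi itvx}\psi_{\k_2}(v,\w;x,s)(1-ix)^{-\k_1}(1+x^2)^{-s}\,dx$,
where $\psi_{\k_2}$ is the absolutely convergent $\sgn(y)$-integral. It is holomorphic for $|\im(x)|<1$ and bounded uniformly in $v,\w$ for $|\im(x)|\le\frac12$, since $\k_2+2\re(s)\ge1+2\e$. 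Then shift the path by $\frac{i}{2}$, as the paper does. Your remark about a $\sin(2\pi s)$ factor in the $\calI_{\k_1+\k_2}$ term plays no role in this range.
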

\begin{proof}

Away from $s=0$, say $|s| > \frac{1}{8}$, as in the proof of Lemma~\ref{lem:analytic_part_calI_properties}~(3), the claimed bound follows as in Proposition~\ref{prop:analytic_part_calK_properties}. This is by shifting the integration path by $\frac{i}{2}$ (since $t \leq 0$) in
\begin{equation*}%\label{eq:details_analytic_part_calK_properties_negative1}
\calK_{\vc{\k}} (v,t,\w;s) v^{2s+\k_1+\k_2 -2} e^{-2\pi t v}  
= \pi i \int_{-\infty}^\infty 
\frac{e^{-2\pi i t v x} \psi_{\k_2} (v,\w; x,s) }{(1-ix)^{\k_1} \lp 1+x^2\rp^s} dx
\end{equation*}
with
\begin{equation*}%\label{eq:details_analytic_part_calK_properties_negative2}
\psi_{\k_2} (v,\w; x,s) := \int_{-\infty}^\infty \frac{\sgn (y) e^{-2\pi i \w v y}}{(1-i(x+y))^{\k_2+s} (1+i(x+y))^{s}} dy .
\end{equation*}
So assume $|s| \leq \frac{1}{8}$ from now on.
Writing $\sgn (y-x) = \d_{y>x} - \d_{x>y}$ in \eqref{eq:Kk_definition}, we get
\begin{equation}\label{eq:calK_calN_rewriting}
\calK_{\vc{\k}} (v,t,\w;s) 
=
\calN_{\vc{\k}} (v,t,\w;s) - \calN_{\k_2,\k_1} (v,t,t-\w;s) ,
\end{equation}
where
\begin{align}
\label{eq:calN_definition}
\calN_{\vc{\k}} (v,t,\w;s) 
&:=
\pi i v^{1-\k_1-s} e^{2 \pi (t-\w) v}
\int_{-\infty}^\infty
\frac{e^{-2\pi i (t -\w) v x}}{\lp 1-ix \rp^{\k_1+s} \lp 1+ix \rp^{s}}  
\mathcal{Y}_{\k_2} (v,\w;x,s) d x,
\\
\label{eq:calY_definition_complex}
\mathcal{Y}_{\k_2} (v,\w;z,s) 
&:=
v^{1-\k_2-s} e^{2 \pi \w v}
\int_z^\infty 
\frac{e^{-2\pi i \w v w}}{\lp 1-iw \rp^{\k_2+s} \lp 1+iw \rp^{s}} dw .
\end{align}
Here the latter is defined for $z \in \IC \setminus ((-i\infty,-i] \cup [i,i\infty))$ and with an integration path avoiding the principal branch cuts.
We first record some properties of $\mathcal{Y}_{\k_2} (v,\w;z,s)$.

\noindent (1) The function $z \mapsto \mathcal{Y}_{\k_2} (v,\w;z,s)$ is holomorphic on $\IC \setminus ((-i\infty,-i] \cup [i,i\infty))$. It is also holomorphic along the cuts $(-i \infty,-i)$ and $(i,i\infty)$ for the extension on both sheets continued from left and right of the cuts (because the integrand satisfies this property).

\noindent (2) As $|s| \leq \frac{1}{8}$, the growth of the integrand in  \eqref{eq:calY_definition_complex} at $w=i$ is at most like \smash{$|1+iw|^{-\frac{1}{8}}$}. So the integral is finite for $z=i$ as well and $\mathcal{Y}_{\k_2} (v,\w;z,s)$ extends continuously there (that is also so for the extension of $\mathcal{Y}_{\k_2}$ to $[i,i\infty)$ from either left or right of the cut).

\noindent (3) Since $\w \leq 0$, the integrand in \eqref{eq:calY_definition_complex} is $\ll_{\k_2,\e} |w|^{-1-2\e}$ for $|w| \geq 2$ (where $|1 \pm i w| \geq 1$).
By using integration paths as shown in Figure~\ref{fig:Yk_bound_proof_path} (where the circular arcs are centered at the origin and have radius $|z|$), we find
\begin{equation}\label{eq:calY_bound_upper_half_plane}
\mathcal{Y}_{\k_2} (v,\w;z,s) \ll_{\k_2,v,\w,s} 1
\quad \mbox{for any } \im (z) \geq 0.
\end{equation}
This also holds for the limits of $\mathcal{Y}_{\k_2} (v,\w;z,s)$ from the left or right of the branch-cut on $[i,i\infty)$.

\begin{figure}[h!]
\vspace{-10pt}
\centering
\includegraphics[scale=0.18]{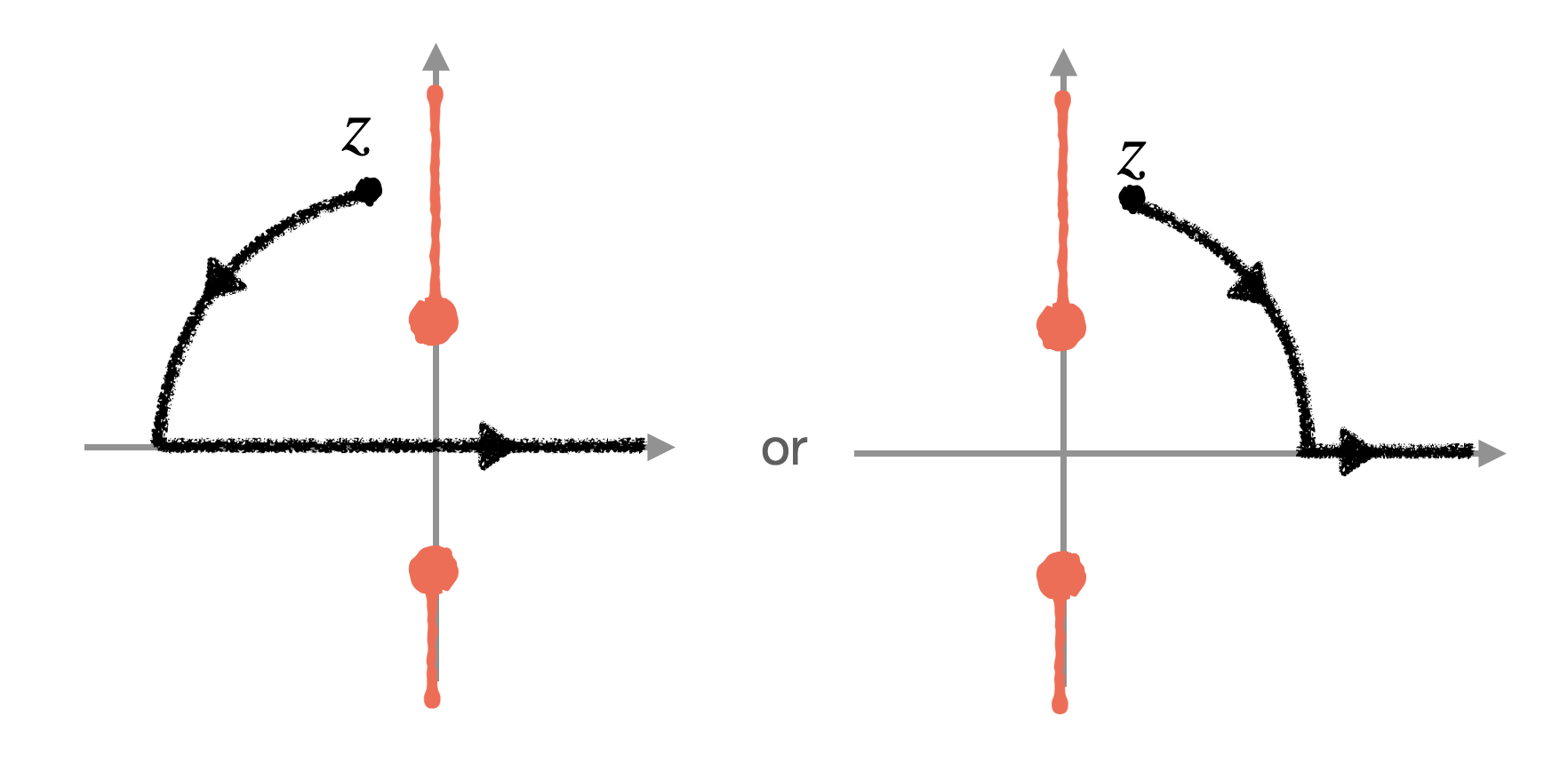}
\vspace{-20pt}
\caption{}
\label{fig:Yk_bound_proof_path}
\end{figure}

\noindent (4) Since the circular arc integrals above go to zero in the upper-half plane as the radius tends to infinity, we rewrite \eqref{eq:calY_definition_complex} as
\begin{equation*}
\mathcal{Y}_{\k_2} (v,\w;z,s) =
v^{1-\k_2-s} e^{2 \pi \w v}
\int_z^{i \infty}
\frac{e^{-2\pi i \w v w}}{\lp 1-iw \rp^{\k_2+s} \lp 1+iw \rp^{s}} dw 
\end{equation*}
with an integration path going to $i \infty$ from the right-side of the branch-cut.
Then defining
\begin{equation*}
\mathcal{Y}^\pm_{\k_2} (v,\w;z,s) 
:= \lim_{\d \to 0^+} \mathcal{Y}_{\k_2} (v,\w;z \pm \d,s),
\end{equation*}
for $x \geq 1$, we have
\begin{equation}\label{eq:calY_plus_minus_integrals}
\mathcal{Y}^\pm_{\k_2} (v,\w;i x,s) = 
v^{1-\k_2-s} e^{2 \pi \w v}
\int_{\CC_x^\pm}
\frac{e^{-2\pi i \w v w}}{\lp 1-iw \rp^{\k_2+s} \lp 1+iw \rp^{s}} dw .
\end{equation}
Here the integration paths $\CC_x^\pm$ are contained on the ray from $i$ to $i \infty$ with the limiting values of the integrand from left or right of the cut used as in Figure~\ref{fig:int_path_Cx_Plus_minus}.

\begin{figure}[h!]
\vspace{-10pt}
\centering
\includegraphics[scale=0.24]{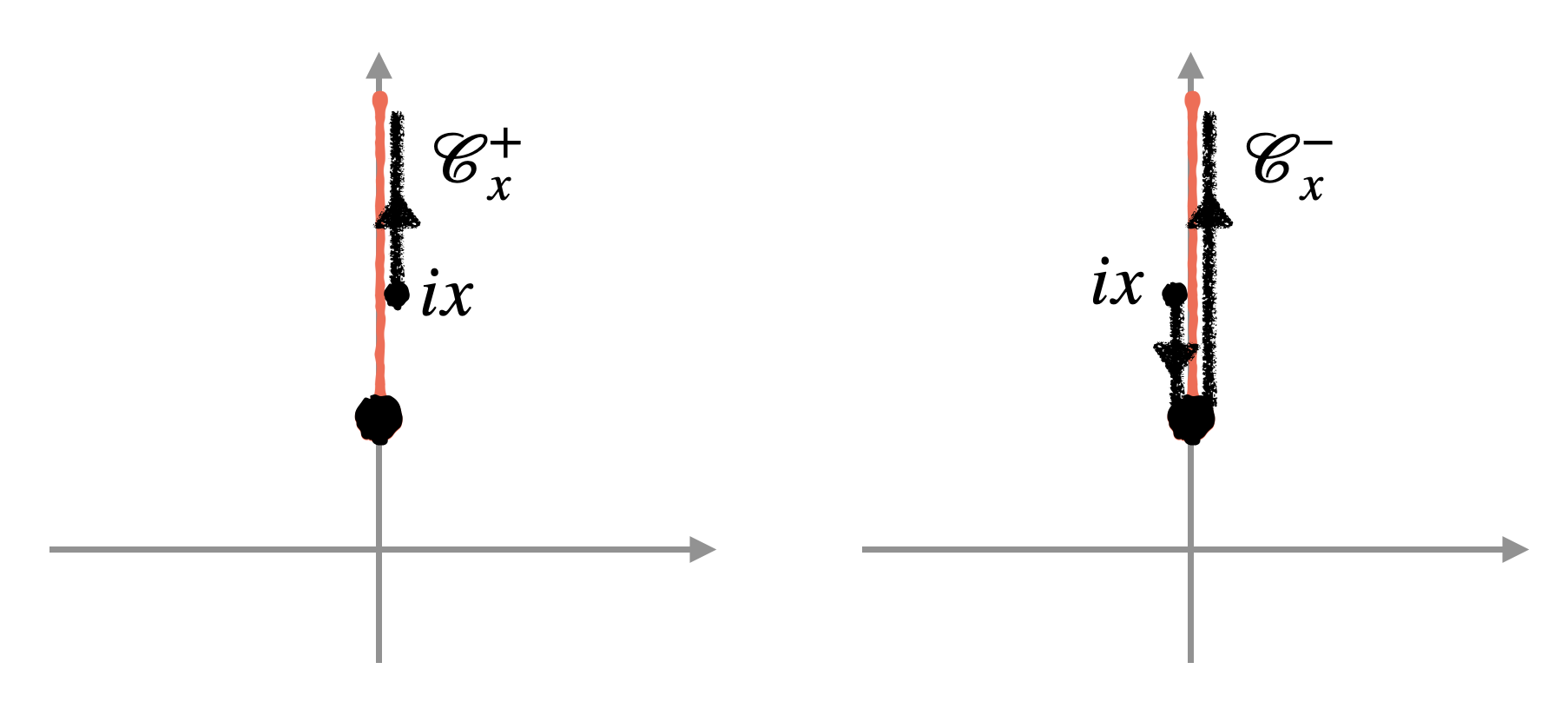}
\vspace{-20pt}
\caption{}
\label{fig:int_path_Cx_Plus_minus}
\end{figure}

\noindent (5) We can also compare the integrals in \eqref{eq:calY_plus_minus_integrals} to those taken on paths going to $i \infty$ from the left-side of the branch-cut. For example, for $x \geq 1$ we find
\begin{equation*}
e^{2 \pi i s} \mathcal{Y}^+_{\k_2} (v,\w;i x,s)
=
v^{1-\k_2-s} e^{2 \pi \w v}
\int_{\CD_x^-}
\frac{e^{-2\pi i \w v w}}{\lp 1-iw \rp^{\k_2+s} \lp 1+iw \rp^{s}} dw,
\end{equation*}
where the integration path $\CD_x^-$ is as shown in Figure~\ref{fig:int_path_Dx_minus}.

\begin{figure}[h!]
\vspace{-10pt}
\centering
\includegraphics[scale=0.24]{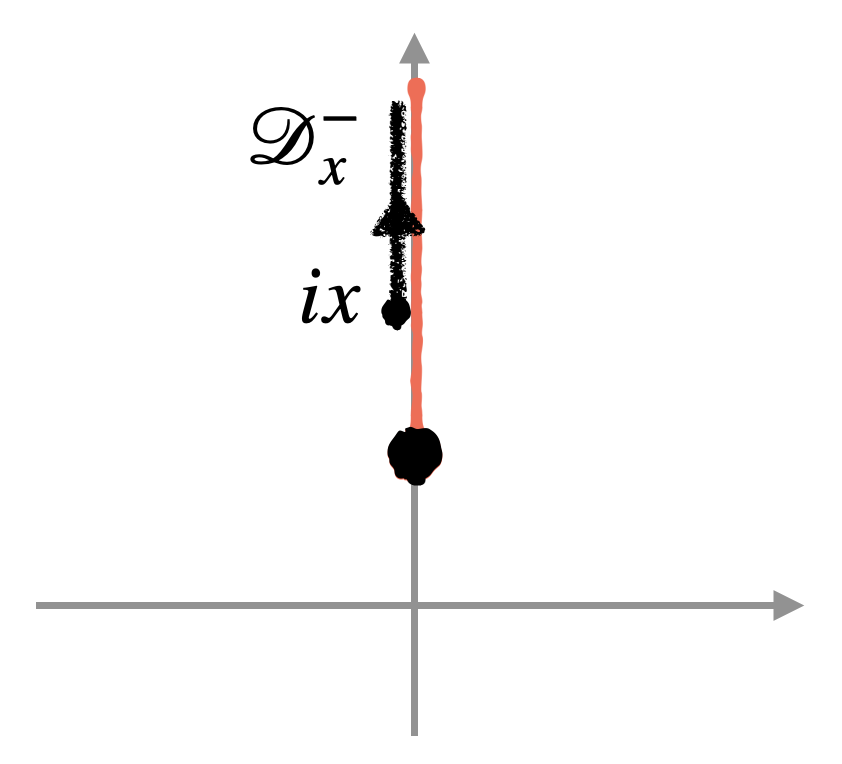}
\vspace{-10pt}
\caption{}
\vspace{-10pt}
\label{fig:int_path_Dx_minus}
\end{figure}

\noindent Note that $\int_{\CC_x^-} - \int_{\CD_x^-} = \int_{\CD}$, where $\CD$ is the integration path given in Figure~\ref{fig:main_int_path_D} to describe $\calI_\k$ as~\eqref{eq:Ik_expression_D_integration_path}. This yields the monodromy relation
\begin{equation}\label{eq:calY_monodromy}
\mathcal{Y}^-_{\k_2} (v,\w;i x,s) - e^{2 \pi i s} \mathcal{Y}^+_{\k_2} (v,\w;i x,s)
=
\calI_{\k_2} (v,\w;s) 
\quad \mbox{for } x \geq 1. 
\end{equation}

Returning to~\eqref{eq:calN_definition},
since $t-\w \leq 0$, we can deform the integration path in \eqref{eq:calN_definition} to~$\CD$, recalling \eqref{eq:calY_bound_upper_half_plane} along with the fact that $|s| \leq \frac{1}{8}$ for the growth at $i$.
This lets us rewrite
\begin{multline*}
\calN_{\vc{\k}} (v,t,\w;s)
=
- \pi v^{1-\k_1-s} e^{2 \pi (t-\w) v}
\\ \times
\int_1^\infty \frac{e^{2\pi (t -\w) v x}}{(x+1)^{\k_1+s} (x-1)^s}
\lp e^{-\pi i s} \mathcal{Y}^+_{\k_2} (v,\w;i x,s) - e^{\pi i s} \mathcal{Y}^-_{\k_2} (v,\w;i x,s) \rp dx .
\end{multline*}
Using \eqref{eq:calY_monodromy} to eliminate $\mathcal{Y}^-_{\k_2} (v,\w;i x,s)$ from this expression and noting \eqref{eq:Ik_expression_D_integration_path} to 
take the integral for the term with $\calI_{\k_2}$, we obtain
\begin{multline*}
\calN_{\vc{\k}} (v,t,\w;s) = 
2 \pi i e^{\pi i s} \sin (2 \pi s) v^{1-\k_1-s} e^{2 \pi (t-\w) v}
\int_1^\infty \frac{e^{2\pi (t -\w) v x}}{(x+1)^{\k_1+s} (x-1)^s}
\mathcal{Y}^+_{\k_2} (v,\w;i x,s)  dx
\\
+ \frac{\pi e^{\pi i s}}{2 \sin (\pi s)} 
\calI_{\k_1} (v,t-\w;s) \calI_{\k_2} (v,\w;s) .
\end{multline*}
Plugging this in \eqref{eq:calK_calN_rewriting} and recalling \eqref{eq:calY_plus_minus_integrals}, we finally find (for $|s| \leq \frac{1}{8}$ and $\w, t-\w \leq 0$)
\begin{multline*}
\frac{\calK_{\vc{\k}} (v,t,\w;s)}{s} = 
- 2\pi \frac{\sin (2 \pi s)}{s} v^{2-\k_1-\k_2-2s} e^{4 \pi t v}
\Bigg( \int_0^\infty \!\! \frac{e^{2\pi (t -\w) v x}}{(x+2)^{\k_1+s} x^s}
\int_x^\infty \!\! \frac{e^{2\pi \w v y}}{(y+2)^{\k_2+s} y^s} d y dx 
\\
- \int_0^\infty \!\! \frac{e^{2\pi \w v x}}{(x+2)^{\k_2+s} x^s}
\int_x^\infty \!\! \frac{e^{2\pi (t-\w) v y}}{(y+2)^{\k_1+s} y^s} d y dx\Bigg).
\end{multline*}
Letting $s \to 0$ immediately gives the claimed expression for $\calK'_{\vc{\k}} (v,t,\w;0)$.
The claimed bound also follows for $|s| \leq \frac{1}{8}$, since the involved integrals are $\ll 1$ there (note $t-\w,\w\leq 0$).
\end{proof}

\subsection{Bounding a convolution sum in arithmetic pieces}\label{sec:bound_arithmetic_convolution_s}

The arithmetic pieces of the coupled Eisenstein series turn out to be  bilinear combinations of arithmetic parts $\calL_{m,\mu}$ from \eqref{eq:arithemtic_part_defn}, which we discuss above for the weight $\frac{3}{2}$ Eisenstein series.
Our goal in this section is to bound a number of such bilinear combinations that we need for the analytic continuation in Section~\ref{sec:analytic_continuation_coupled_eisenstein}.

\begin{lem}\label{lem:arithmetic_bilinear_sing_reg_bounds}
Let $\e > 0$ be fixed, $m_1,m_2 \in \IN$ be odd, square-free, and $\mu_j \in \IZ/2m_j \IZ$ for~$j \in \{1,2\}$. 
Also let $k \in \IN$, $n \in \IZ_{\vc{m},\vc{\mu}}$, and $s \in \IC$.
Then for $\re (s) \geq -\frac{1}{4}$ we have, with $[k] := \{1,2,\ldots,k\}$,
\begin{align}
\label{eq:arithmetic_bilinear_sing_sing_bound}
&\sum_\pm \sum_{r \in \pm [k] + \musqm} \hspace{-0.5cm}
\pm \Lsing{m_1,\mu_1} (n-r;s) \Lsing{m_2,\mu_2} (r;s) 
\ll_{\vc{m},\e} 
\lp 1 + |n| \rp k^\e ,
\\
&\sum_\pm \sum_{r \in \pm [k] + \musqm} \hspace{-0.5cm}
\pm \lp 
\Lsing{m_1,\mu_1} (n-r;s) \Lreg{m_2,\mu_2} (r;s) 
+ \Lreg{m_1,\mu_1} (n-r;s) \Lsing{m_2,\mu_2} (r;s) 
\rp
\notag \\[-3ex] & \hspace{6cm}
\ll_{\vc{m},\e} 
\lp 1 + |\im (s)| \rp 
\lp 1 + |n| \rp
k^{\frac{1}{2}+\max \left\{ -\frac{2\re (s)}{3}, 0\right\} + \e}.
\label{eq:arithmetic_bilinear_reg_sing_bound}
\end{align}
\end{lem}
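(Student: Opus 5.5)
The plan is to feed the pointwise bounds of Lemma~\ref{lem:Lreg_Lsing_bound} into the sums and control the number of surviving terms. The singular factors are supported on a very thin set of arguments, and that sparsity is what keeps the $k$-dependence small. Throughout, the index $r$ ranges over $\pm[k] + \{-\frac{\mu_2^2}{4m_2}\}$. Each such $r$ lies in $\IZ_{m_2,\mu_2}$, and $n-r \in \IZ_{m_1,\mu_1}$, so all the terms are well defined. We have $|r| \leq k+1$ and $|n-r| \leq |n|+k+1$.

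For \eqref{eq:arithmetic_bilinear_sing_sing_bound}, Lemma~\ref{lem:Lreg_Lsing_bound} gives $\Lsing{m_1,\mu_1}(n-r;s)\Lsing{m_2,\mu_2}(r;s) \ll (1+|n|+k)^{2\e}$, and a term can be nonzero only if both $-4m_1(n-r)$ and $-4m_2 r$ are squares. By Lemma~\ref{lem:arithmetic_part_at_origin}, the second condition forces $r=-m_2\ell_2^2$ with $\ell_2 \in \frac{1}{2m_2}\IZ$, and the first forces $n-r=-m_1\ell_1^2$. Hence $n = -m_1\ell_1^2 - m_2\ell_2^2$, which is a negative semidefinite binary form. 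If $n>0$ no term survives. If $n \leq 0$, the number of representations is finite and at most $\ll_{\vc m} 1+|n|$, uniformly in $k$: indeed $|\ell_2| \ll \sqrt{|n|}$, so there are at most $O(\sqrt{|n|}+1)$ choices for $\ell_2$, and each determines $\ell_1$ up to sign. The sum therefore has $O(1+|n|)$ terms, each of size $(1+|n|)^{\e} k^{\e}$. Relabelling $\e$ gives the bound $(1+|n|)k^\e$; a slightly sharper count would do even better.

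For \eqref{eq:arithmetic_bilinear_reg_sing_bound}, consider the term $\Lsing{m_2,\mu_2}(r;s)\Lreg{m_1,\mu_1}(n-r;s)$. The singular factor forces $r = -m_2\ell^2 \leq 0$ with $|r| \leq k+1$, which leaves $O(\sqrt{k})$ admissible $\ell$. Each term is bounded by $(1+|\im s|)(1+|n|+k)^{\max\{-2\re(s)/3,0\}+\e}k^\e$. Summing gives $(1+|\im(s)|)k^{1/2}(1+|n|+k)^{\max\{-2\re(s)/3,0\}+2\e}$. Since $\re(s) \geq -\frac14$, the exponent is at most $\frac16+2\e < 1$, so we may use $(1+|n|+k)^a \leq (1+|n|)^a(1+k)^a \leq (1+|n|)(1+k)^a$, which yields the claimed bound. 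The other term, $\Lsing{m_1,\mu_1}(n-r;s)\Lreg{m_2,\mu_2}(r;s)$, is handled the same way. Here $n-r = -m_1\ell^2$ with $|n-r| \leq |n|+k+1$, so the number of admissible $\ell$ is $\ll \sqrt{|n|+k}+1 \ll (1+|n|)^{1/2}k^{1/2}$. The regular factor is bounded by $(1+|\im s|)(1+k)^{\max\{\cdot\}+\e}$, and multiplying gives the same estimate.

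The only delicate point is bookkeeping. First, the sign-alternating sum $\sum_\pm \pm$ is simply bounded termwise; no cancellation is needed. Second, the $\e$-powers of $|n|$ must be absorbed into the factor $(1+|n|)$, which is harmless because the statement allows a full power of $1+|n|$. The main conceptual step is the sparsity count: the singular pieces are supported on values of quadratic forms, which gives $O(\sqrt{k})$ terms in place of $k$ and produces the exponent $\frac12$.
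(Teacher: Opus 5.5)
Your argument is correct and has the same skeleton as the paper's: pointwise bounds from Lemma~\ref{lem:Lreg_Lsing_bound}, combined with the sparsity of the support of $\mathcal{L}^{\mathrm{sing}}$. The counting steps differ, though.

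For \eqref{eq:arithmetic_bilinear_sing_sing_bound}, the paper bounds the number of surviving terms by the number of $\vc{\ell}\in\IZ^2$ with $-4m_1m_2n=m_2\ell_1^2+m_1\ell_2^2$. It then invokes Deligne--Serre for weight one modular forms to get $\ll 1+|n|^{\e}$. You use the trivial lattice-point count $O(1+\sqrt{|n|})$ instead. That suffices because the target bound allows a full factor $1+|n|$, so the deep input is unnecessary here.

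For \eqref{eq:arithmetic_bilinear_reg_sing_bound}, the paper uses that $\ell$ consecutive integers contain at most $1+\sqrt{\ell}$ squares. This gives $O(\sqrt{k})$ surviving terms, uniformly in $n$, for both pieces. Your count for the $\Lsing{m_1,\mu_1}(n-r;s)$ piece is $\ll (1+|n|)^{1/2}k^{1/2}$. It is cruder in $n$, but again absorbed into $1+|n|$.

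In short, the paper's counts give much better $n$-dependence, which is then discarded anyway, while yours are more elementary.

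There is one bookkeeping slip. In the singular--singular case you relax $O(1+\sqrt{|n|})$ to $O(1+|n|)$ terms and then multiply by $(1+|n|)^{\e}$. That gives $(1+|n|)^{1+\e}$, and relabelling $\e$ cannot remove the extra power. Keep the $\sqrt{|n|}$ count instead. Alternatively, note that on the support $n\le r\le 0$, so each term is $\ll (1+|n|)^{2\e}$ with no $k$-dependence at all.
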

\begin{proof}
We first bound the combination with $\Lsing{} \Lsing{}$.
For $r \! \in \! \pm [k] + \musqmsmall$, 
we have $1+|r|^\e \ll_{\e} k^\e$ and $1+|n-r|^\e \ll_\e \lp 1+|n|^\e \rp k^\e$.
Then, by Lemma~\ref{lem:Lreg_Lsing_bound}, for $\re (s) \geq -\frac{1}{4}$, we find
\begin{equation*}
\sum_\pm \! \sum_{r \in \pm [k] + \musqm} \hspace{-0.7cm}
\pm \Lsing{m_1,\mu_1} (n-r;s) \Lsing{m_2,\mu_2} (r;s) 
\ll_{\vc{m},\e}
\lp 1+|n|^\e \rp k^{2\e} 
\sum_\pm \!
\sum_{r \in \pm [k] +\musqm} \hspace{-0.7cm}
\d_{-4m_1(n-r) = \square} \d_{-4m_2r = \square} .
\end{equation*}
If \smash{$\d_{-4m_1(n-r) = \square} \d_{-4m_2r = \square}$} is nonzero, there exist $\ell_1, \ell_2 \in \IN_0$ with $-4m_1(n-r) = \ell_1^2$ and $-4m_2r = \ell_2^2$. 
Then, because $\ell_2$ uniquely determines $r$, we estimate
\begin{equation*}
\sum_\pm \!
\sum_{r \in \pm [k] +\musqm} \hspace{-0.5cm}
\d_{-4m_1(n-r) = \square} \d_{-4m_2r = \square} 
\leq 
\# \left\{ \vc{\ell} \in \IZ^2 :\, -4m_1m_2n = m_2 \ell_1^2 + m_1 \ell_2^2 \right\}
\ll_{\vc{m}, \e} 1 + |n|^\e .
\end{equation*}
The last bound follows from the fact that the $\ell$-th Fourier coefficient of a weight one modular form $f$ on a congruence subgroup is $\ll_{f,\e} 1 + \ell^\e$ by \cite{DS}.
This yields \eqref{eq:arithmetic_bilinear_sing_sing_bound} after $\e \mapsto \frac{\e}{2}$ and relaxing the exponent of $|n|$ for the estimate $(1+|n|^\e) k^\e$, as we can assume $\e < 1$ without loss of generality.

To prove the second bound, we similarly use Lemma~\ref{lem:Lreg_Lsing_bound}, to obtain
\begin{multline*}
\sum_\pm  \sum_{r \in \pm [k] + \musqm} \hspace{-0.5cm}
\pm \lp 
\Lsing{m_1,\mu_1} (n-r;s) \Lreg{m_2,\mu_2} (r;s) 
+ \Lreg{m_1,\mu_1} (n-r;s) \Lsing{m_2,\mu_2} (r;s) 
\rp
\\
\ll_{\vc{m},\e} \!
\lp 1+ |\im (s)| \rp
\lp 1 + |n|^{\frac{1}{6} + 2\e} \rp 
k^{\max \left\{ -\frac{2\re (s)}{3}, 0\right\} + 2\e}
\sum_\pm \! \sum_{r \in \pm [k] + \musqm} \hspace{-0.6cm} 
\lp \d_{-4m_1(n-r) = \square}  + \d_{-4m_2r = \square} \rp .
\end{multline*}
Noting that in a sequence of $\ell \in \IN$ consecutive integers, there are $\leq 1 + \sqrt{\ell}$ squares, we can bound the remaining sums as $\ll_{\vc{m}} \sqrt{k}$.
Then \eqref{eq:arithmetic_bilinear_reg_sing_bound} follows as above by rescaling 
$\e \mapsto \frac{\e}{2}$ and relaxing the exponent of $|n|$ while assuming $\e < \frac{5}{6}$ without loss of generality.
\end{proof}

We next estimate the combination with $\Lreg{} \Lreg{}$, where the bound that follows from Lemma~\ref{lem:Lreg_Lsing_bound} turns out to be too weak in the $k$-aspect. 
To overcome this problem, we use convexity arguments to interpolate between 
the best possible (yet insufficient) bound at $\re (s) = -\frac{1}{4}$ following from Lemma~\ref{lem:Lreg_Lsing_bound} (via the Weyl bound \eqref{eq:Petrow_Young_bound}) 
and 
good estimates in the region of absolute convergence using cancellations between the summands.
We start with the latter step.

\begin{lem}\label{lem:arithmetic_bilinear_absolute_conv_k_bound}
Let $\e > 0$ be fixed, $m_1,m_2 \in \IN$, and $\mu_j \in \IZ/2m_j\IZ$ for $j \in \{1,2\}$. 
Also let $k \in \IN$, $n \in \IZ_{\vc{m},\vc{\mu}}$, and $s \in \IC$.
Then, for $\re (s) \geq \frac{1}{4} + \e$, we have
\begin{equation*}
\sum_\pm \sum_{r \in \pm [k] + \musqm} \hspace{-0.5cm}
\pm \calL_{m_1,\mu_1} (n-r;s) \calL_{m_2,\mu_2} (r;s) 
\ll_{\e}  1 .
\end{equation*}
\end{lem}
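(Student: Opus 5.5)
We will prove the bound by opening both arithmetic parts into their defining series \eqref{eq:arithemtic_part_defn} and performing the sum over $r$ first. For $\re(s) \geq \frac14 + \e$ both series converge absolutely, so for fixed $n,k$ we may write
\begin{equation*}
\sum_\pm \sum_{r \in \pm [k] + \musqm} \hspace{-0.5cm}
\pm \calL_{m_1,\mu_1} (n-r;s) \calL_{m_2,\mu_2} (r;s)
= i \sum_{c_1,c_2 \geq 1} (c_1c_2)^{-2s-\frac32}
\sum_{\substack{d_1 \pmod{c_1}^* \\ d_2 \pmod{c_2}^*}}
\Psi_{m_1}(M_{c_1,d_1})_{0,\mu_1}\Psi_{m_2}(M_{c_2,d_2})_{0,\mu_2}
e^{2\pi i n \frac{d_1}{c_1}} \, S_k\lp \tfrac{d_2}{c_2}-\tfrac{d_1}{c_1}\rp,
\end{equation*}
where, writing $r = \pm j - \frac{\mu_2^2}{4m_2}$ with $j \in [k]$, the inner sum is a difference of two geometric sums,
\begin{equation*}
S_k(x) := e^{-2\pi i \frac{\mu_2^2}{4m_2} x} \sum_{j=1}^k \lp e^{2\pi i j x} - e^{-2\pi i j x}\rp
= 2i\, e^{-2\pi i \frac{\mu_2^2}{4m_2} x} \sum_{j=1}^k \sin(2\pi j x).
\end{equation*}
This is exactly where the cancellation between the $\pm$ summands is used. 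The factor $e^{2\pi i r d_2/c_2}$ is well defined because $\Psi_{m_2}(M_{c_2,d_2})_{0,\mu_2} e^{2\pi i r d_2/c_2}$ is independent of the representative $d_2$ for $r \in \IZ_{m_2,\mu_2}$ (Remark~\ref{rem:psi_zero_mu_lower_row_determining}). We also use $|\Psi_m(M)_{0,\mu}| \leq 1$, which follows from unitarity.

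The key estimate is that the conjugate Dirichlet kernel $\sum_{j=1}^k \sin(2\pi j x)$ is bounded uniformly in $k$ by $\ll \|x\|^{-1}$, where $\|x\|$ denotes the distance from $x$ to $\IZ$. It vanishes when $x \in \IZ$. Equivalently, it equals $\frac{\cos(\pi x)-\cos((2k+1)\pi x)}{2\sin(\pi x)}$, which is at most $|\sin \pi x|^{-1}$ in absolute value.

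For $x = \frac{d_2}{c_2}-\frac{d_1}{c_1} = \frac{c_1d_2 - c_2 d_1}{c_1c_2}$ with $x \notin \IZ$, we have $\|x\| \geq \frac{1}{c_1c_2}$. This gives only the trivial bound $|S_k| \ll c_1c_2$, and summing over all $d_1,d_2$ would then lose too much. So instead of bounding the $(d_1,d_2)$ sum termwise, we fix $c_1,c_2,d_1$ and sum over $d_2$. The points $\frac{d_2}{c_2}$ are distinct modulo $1$ and spaced by $\frac{1}{c_2}$, so their distances to $\frac{d_1}{c_1}$ modulo $1$ satisfy $\|x\| \geq \frac{\ell}{2c_2}$ for the $\ell$-th closest point, except possibly one point with $\|x\| \geq \frac{1}{c_1c_2}$. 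Hence
\begin{equation*}
\sum_{d_2 \pmod{c_2}^*} |S_k(x)| \ll c_1c_2 + c_2 \log(2c_2).
\end{equation*}

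Summing over $d_1$ multiplies this by $c_1$. The total is therefore bounded by
\begin{equation*}
\sum_{c_1,c_2} (c_1c_2)^{-2\re(s)-\frac32} \lp c_1^2c_2 + c_1c_2\log(2c_2)\rp.
\end{equation*}
With $\re(s) \geq \frac14+\e$, the exponent of $c_2$ is at most $-1-2\e$ (up to a logarithm), so the $c_2$-sum converges. However, the $c_1$ exponent is $-2\re(s) + \frac12 \leq -2\e$, which is not summable. \textbf{This is the main obstacle}: the single exceptional nearest point costs a factor $c_1c_2$.

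To handle it, we will refine the count of near-coincidences. The condition $\|x\| < \frac{1}{c_2}$ forces $|c_1d_2 - c_2d_1 - c_1c_2 N| < c_1$ for some $N$. Symmetrizing, we can also sum over $d_1$ for fixed $d_2$ and take the better of the two orders, using $\|x\|\geq \frac{1}{c_1c_2}$ only when $g := c_1d_2-c_2d_1 \pmod{c_1c_2}$ is small. We first try to bound the number of pairs $(d_1,d_2)$ with $|g| = h$ by $\ll \gcd(c_1,c_2)$ for each $h$. That gives
\begin{equation*}
\sum_{d_1,d_2} \frac{1}{\|x\|} \ll c_1c_2 \gcd(c_1,c_2)\log(c_1c_2),
\end{equation*}
and the total becomes
\begin{equation*}
\sum_{c_1,c_2} (c_1c_2)^{-\frac12-2\re(s)}\gcd(c_1,c_2)\log(c_1c_2).
\end{equation*}
Since the exponent is $\leq -1-2\e$ and $\sum_{c_1,c_2}\gcd(c_1,c_2)(c_1c_2)^{-1-2\e} < \infty$, this converges uniformly in $k$ and $n$, which proves the lemma with a bound $\ll_\e 1$.
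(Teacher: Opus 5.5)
Your argument is correct and is essentially the paper's: open both Dirichlet series, do the $r$-sum to get a unimodular phase times $2i\sum_{j=1}^k \sin(2\pi j x)$, bound this by $|\sin \pi x|^{-1}$, and count pairs $(d_1,d_2)$ by the residue of $c_1d_2-c_2d_1$ modulo $c_1c_2$, each residue being hit at most $\gcd(c_1,c_2)$ times. The differences are minor. The paper notes that only multiples of $\gcd(c_1,c_2)$ occur, which gives $c_1c_2\log(c_1c_2)$ directly; you instead absorb the extra factor via $\sum_{c_1,c_2}\gcd(c_1,c_2)(c_1c_2)^{-1-2\e}<\infty$. Also, you should actually justify the multiplicity rather than leaving it as something you ``try'': the map $(d_1,d_2)\mapsto c_1d_2-c_2d_1$ is a homomorphism $\IZ/c_1\IZ\times\IZ/c_2\IZ\to\IZ/c_1c_2\IZ$ whose kernel has $\gcd(c_1,c_2)$ elements.
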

\begin{proof}
Denote the left-hand side in the lemma by $\ff_{\vc{m},\vc{\mu}} (n,k;s)$.
Using \eqref{eq:arithemtic_part_defn}, we rewrite it as
\begin{multline*}
\ff_{\vc{m},\vc{\mu}} (n,k;s) 
=
-2 \sum_{c_1,c_2 \geq 1} c_1^{-2s-\frac{3}{2}} c_2^{-2s-\frac{3}{2}}
\sum_{\substack{d_1 \pmod{c_1}^* \\ d_2 \pmod{c_2}^*}}  
\Psi_{m_1} (M_{c_1,d_1})_{0,\mu_1} \Psi_{m_2} (M_{c_2,d_2})_{0,\mu_2} 
\\[-0.4em]
\times
e^{2 \pi i n \frac{d_1}{c_1} + 2\pi i \musqmbig \lp \frac{d_2}{c_2} - \frac{d_1}{c_1} \rp }
\sum_{r=1}^k \sin  \lp 2\pi r \lp \frac{d_2}{c_2} - \frac{d_1}{c_1} \rp \rp .
\end{multline*}
Taking the sum over $r$ gives (using $\re (s) \geq \frac{1}{4} + \e$ and relaxing the $\gcd$ condition for $d_1,d_2$)
\begin{equation*}
| \ff_{\vc{m},\vc{\mu}} (n,k;s)  |
\leq 2 \sum_{c_1,c_2 \geq 1} c_1^{-2-2\e} c_2^{-2-2\e}
\sum_{\substack{d_1 \pmod{c_1} \\ d_2 \pmod{c_2}}} 
\frac{\d_{c_1 d_2 - c_2 d_1 \not\equiv 0 \pmod{c_1c_2}}}{
\left| \sin \lp \pi \frac{c_1 d_2 - c_2 d_1}{c_1 c_2}  \rp \right|} .
\vspace{-1ex}
\end{equation*}
Now define $\ell := \gcd (c_1,c_2)$, $\ccc_1 := \frac{c_1}{\ell}$, and $\ccc_2 := \frac{c_2}{\ell}$.
Note that for $d:= \ccc_1 d_2 - \ccc_2 d_1$, as $d_1, d_2$ runs modulo $\ccc_1 \ell$, resp.~$\ccc_2 \ell$, $d$ runs modulo $\ccc_1 \ccc_2 \ell$ exactly $\ell$ times.
Then
\begin{equation*}
\sum_{\substack{d_1 \pmod{c_1} \\ d_2 \pmod{c_2}}} 
\frac{\d_{c_1 d_2 - c_2 d_1 \not\equiv 0 \pmod{c_1c_2}}}{
\left| \sin \lp \pi \frac{c_1 d_2 - c_2 d_1}{c_1 c_2}  \rp \right|}
=
\ell \sum_{d=1}^{\ccc_1 \ccc_2 \ell-1} 
\frac{1}{\sin \lp \frac{\pi d}{\ccc_1 \ccc_2 \ell}  \rp}
\leq \ccc_1 \ccc_2 \ell^2 \log \lp \ccc_1 \ccc_2 \ell \rp 
\vspace{-1ex}
\end{equation*}
so that
\begin{equation*}
| \ff_{\vc{m},\vc{\mu}} (n,k;s)  |
\leq 2 \sum_{c_1,c_2 \geq 1} c_1^{-1-2\e} c_2^{-1-2\e}
\log \lp c_1 c_2 \rp
\ll_\e 1 .
\qedhere
\end{equation*}
\end{proof}

We now bound the combination with $\Lreg{} \Lreg{}$.

\begin{prop}\label{prop:arithmetic_bilinear_reg_reg_bound}
Let $\e > 0$ be fixed, $m_1,m_2 \!\in\! \IN$ be odd, square-free, and $\mu_j \!\in  \IZ/2m_j \IZ$ for~$j \!\in\!  \{1,2\}$. 
Also let $k \in \IN$, $n \in \IZ_{\vc{m},\vc{\mu}}$, and $s \in \IC$.
Then for $\re (s) \geq -\frac{1}{4}$, we have
\begin{equation*}
\sum_\pm \sum_{r \in \pm [k] + \musqm} \hspace{-0.5cm}
\pm \Lreg{m_1,\mu_1} (n-r;s) \Lreg{m_2,\mu_2} (r;s) 
\ll_{\vc{m},\e} 
\lp 1 + |\im (s)| \rp^2
\lp 1 + |n| \rp
k^{\frac{1}{2}+ \frac{5}{12} \max \left\{1-4\re (s), 0\right\} + \e} . 
\end{equation*}
\end{prop}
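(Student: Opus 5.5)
The plan is to apply the Phragm\'en--Lindel\"of principle (Theorem~\ref{thm:phragmen_lindelof}) to the function
\begin{equation*}
\hh(s) := \sum_\pm \sum_{r \in \pm [k] + \musqm} \hspace{-0.5cm}
\pm \Lreg{m_1,\mu_1} (n-r;s) \Lreg{m_2,\mu_2} (r;s)
\end{equation*}
on a strip $-\frac14 \le \re(s) \le \frac14+\e$. This interpolates between the trivial bound at the left edge and the cancellation bound of Lemma~\ref{lem:arithmetic_bilinear_absolute_conv_k_bound} at the right edge. For $\re(s) \geq \frac14+\e$ nothing needs interpolating: there the claimed exponent is just $\frac12+\e$, and the absolute-convergence estimate below already gives it.

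\emph{Right edge.} At $\re(s)=\frac14+\e$, solve \eqref{eq:Lreg_Lsing_decomp_definition} for $\Lreg{}$:
\begin{equation*}
\Lreg{m,\mu}(n;s)=\sqrt{2m}\,\s_{-2s-1}(m)\z(4s+2)\,\calL_{m,\mu}(n;s)-\tfrac{1}{4s}\Lsing{m,\mu}(n;s).
\end{equation*}
Expanding the product $\Lreg{}\Lreg{}$ then gives three kinds of terms:
\begin{itemize}
\item The $\calL\calL$ term carries a prefactor bounded on this line, so Lemma~\ref{lem:arithmetic_bilinear_absolute_conv_k_bound} bounds it by $\ll 1$.
\item The mixed $\calL\,\Lsing{}$ terms are handled by the bound on $\calL$ from Lemma~\ref{lem:Lreg_Lsing_bound} (or absolute convergence, giving $\ll 1$) together with counting squares. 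Since $\Lsing{}$ is supported on $-4m_jr=\square$, as in the proof of Lemma~\ref{lem:arithmetic_bilinear_sing_reg_bounds}, this gives $(1+|n|)k^{\frac12+\e}$.
\item The $\Lsing{}\Lsing{}$ term is handled by \eqref{eq:arithmetic_bilinear_sing_sing_bound}.
\end{itemize}
So on the right edge $\hh(s)\ll(1+|\im s|)(1+|n|)k^{\frac12+\e}$. Note that $1/s$ is harmless there.

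\emph{Left edge.} At $\re(s)=-\frac14$, Lemma~\ref{lem:Lreg_Lsing_bound} bounds each factor by $(1+|\im s|)(1+|r|^{\frac16+\e})$. Summing trivially over $2k$ values of $r$ gives
\begin{equation*}
\hh(s)\ll(1+|\im s|)^2(1+|n|)\,k^{1+\frac13+\e}=(1+|\im s|)^2(1+|n|)\,k^{\frac43+\e}.
\end{equation*}
The factor $(1+|n|)$ comes from $|n-r|^{1/6}\ll(1+|n|)^{1/6}k^{1/6}$.

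\emph{Interpolation.} The $k$-exponent interpolates linearly between $\frac43$ at $\re s=-\frac14$ and $\frac12$ at $\re s=\frac14$. Over a strip of width $\frac12$ it drops by $\frac56$, so the exponent is $\frac12+\frac56\cdot 2(\frac14-\re s)=\frac12+\frac{5}{12}(1-4\re s)$, matching the claim.

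Theorem~\ref{thm:phragmen_lindelof} needs a crude growth hypothesis $\ll e^{|\im s|^B}$ in the strip, which holds since each $\Lreg{}$ is entire of finite order. The polynomial $|\im s|$-factors then interpolate to $(1+|\im s|)^2$ after relaxing exponents. To apply the theorem I take $f=\hh$ and $M_a,M_b$ carrying the $k$- and $n$-dependence. The $\e$-shift of the right edge costs only an $\e$ in the exponent.

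The main obstacle is the right-edge bound. The cancellation in Lemma~\ref{lem:arithmetic_bilinear_absolute_conv_k_bound} is for $\calL\calL$, not $\Lreg{}\Lreg{}$, so the $\Lsing{}$ correction terms must be shown to cost no more than $k^{1/2+\e}$. This relies on the sparsity of squares among $r$, which restricts the sum to $O(\sqrt k)$ terms.
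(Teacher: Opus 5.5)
Your proposal is correct and follows essentially the same route as the paper. You use the trivial bound from Lemma~\ref{lem:Lreg_Lsing_bound} at $\re(s)=-\frac14$, then the decomposition \eqref{eq:Lreg_Lsing_decomp_definition} together with Lemma~\ref{lem:arithmetic_bilinear_absolute_conv_k_bound} and square-counting for the singular pieces at $\re(s)=\frac14+\e$, and then Theorem~\ref{thm:phragmen_lindelof} to interpolate. The differences are cosmetic: you write the cross terms as $\calL\,\Lsing{}$ and bound them by absolute convergence of $\calL$, where the paper writes them as $\Lreg{}\Lsing{}$ and cites \eqref{eq:arithmetic_bilinear_reg_sing_bound}; and you justify the growth hypothesis by finite order, where the paper uses the trivial polynomial bound.
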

\begin{proof}
Denote the left-hand side of the claimed bound by $\hh_{\vc{m},\vc{\mu}} (n,k;s)$.
Using Lemma~\ref{lem:Lreg_Lsing_bound} as in the proof of Lemma~\ref{lem:arithmetic_bilinear_sing_reg_bounds} yields the weaker bound
\begin{equation}\label{eq:arithmetic_bilinear_reg_reg_bound_midstep1}
\hh_{\vc{m},\vc{\mu}} (n,k;s)
\ll_{\vc{m},\e}
\lp 1 + |\im (s)| \rp^2
\lp 1 + |n| \rp
k^{1 + \max \left\{ -\frac{4\re (s)}{3}, 0\right\}+ \e} 
\quad  \mbox{for } \re (s) \geq -\frac{1}{4} .
\end{equation}
To improve upon this, we use the decomposition \eqref{eq:Lreg_Lsing_decomp_definition} to write
\begin{align*}
&\hh_{\vc{m},\vc{\mu}} (n,k;s)
\!=\! 
2 \sqrt{m_1 m_2} \s_{-2s-1} (m_1) \s_{-2s-1} (m_2) \z^2 (4s+2)
\sum_\pm \!\! \sum_{r \in \pm [k] + \musqm}  \hspace{-0.7cm}
\pm \calL_{m_1,\mu_1} (n-r;s) \calL_{m_2,\mu_2} (r;s) 
\\[-1ex]
& \hspace{3cm}
- \frac{1}{4s} 
\sum_\pm \!\! \sum_{r \in \pm [k] + \musqm}   \hspace{-0.7cm}
\pm \lp \Lreg{m_1,\mu_1} (n-r;s) \Lsing{m_2,\mu_2} (r;s) 
+ \Lsing{m_1,\mu_1} (n-r;s) \Lreg{m_2,\mu_2} (r;s)  \rp
\\[-1ex]
& \hspace{4cm}
-\frac{1}{16s^2} 
\sum_\pm \!\! \sum_{r \in \pm [k] + \musqm}  \hspace{-0.7cm}
\pm \Lsing{m_1,\mu_1} (n-r;s) \Lsing{m_2,\mu_2} (r;s)  .
\end{align*}
Then employing Lemmas~\ref{lem:arithmetic_bilinear_sing_reg_bounds} and \ref{lem:arithmetic_bilinear_absolute_conv_k_bound} we obtain
(slightly weakening in the $s$-aspect)
\begin{equation}\label{eq:arithmetic_bilinear_reg_reg_bound_midstep2}
\hh_{\vc{m},\vc{\mu}} (n,k;s) \ll_{\vc{m},\e} 
\lp 1 + |\im (s)| \rp^2
\lp 1 + |n| \rp
k^{\frac{1}{2} + \e} 
\quad  \mbox{for } \re (s) \geq \frac{1}{4}+\e .
\end{equation}
For $-\frac{1}{4} \leq \re (s) \leq \frac{1}{4} + \e$, we use Theorem~\ref{thm:phragmen_lindelof} to interpolate from the bound \eqref{eq:arithmetic_bilinear_reg_reg_bound_midstep1} for $\re (s) \!=\! - \frac{1}{4}$ to \eqref{eq:arithmetic_bilinear_reg_reg_bound_midstep2} for $\re (s) \!=\! \frac{1}{4} + \e$ (noting $\hh_{\vc{m},\vc{\mu}} (n,k;s)  \ll_{n,k,\vc{m},\e} e^{|\im (s)|}$ in that range by \eqref{eq:arithmetic_bilinear_reg_reg_bound_midstep1}).
With \eqref{eq:arithmetic_bilinear_reg_reg_bound_midstep2}, this yields
the proposition (rescaling $\e$ as necessary).
\end{proof}

\section{Analytic Continuation of the Coupled Eisenstein Series}
\label{sec:analytic_continuation_coupled_eisenstein}

Using the results from Section~\ref{sec:analytic_atirhmetic_parts_coupled_eisenstein}, our goal now is the analytic continuation of 
$\calF_{\vc{m},\vc{\mu}} (\t, \bar{\t};s)$, defined in \eqref{eq:dimension_two_part_Eisenstein}, to a half-plane containing $s=0$. 
This leads to the proof of Theorem~\ref{thm:coupled_Eisenstein_depth_two_analytic_continuation}, given at the end of this section.
We start with the Fourier expansion of $\calF_{\vc{m},\vc{\mu}} (\t, \bar{\t};s)$ for $\Re (s) > \frac{1}{4}$.

\begin{prop}\label{prop:dimension_two_part_Eisenstein_fourier_expansion}
Let $m_1,m_2 \in \IN$, $\mu_j \in \IZ/2m_j\IZ$ for $j \in \{1,2\}$, $\t \in \IH$, and $s \in \IC$ with $\re (s) > \frac{1}{4}$. Then we have
\begin{equation*}
\calF_{\vc{m},\vc{\mu}} (\t, \bar{\t};s)
= s  \sum_{n \in \IZ_{\vc{m},\vc{\mu}}} e^{2 \pi i n \t} 
\psum_{k \in \IZ_{m_2,\mu_2}} 
\calK (v,n,k;s) \calL_{m_1,\mu_1} (n-k;s) \calL_{m_2,\mu_2} (k;s) .
\end{equation*}
\end{prop}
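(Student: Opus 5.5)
We follow the standard route for the Fourier expansion of Eisenstein series, now in two variables. Starting from \eqref{eq:dimension_two_part_Eisenstein}, we write $d_j = c_j \ell_j + \delta_j$ with $\delta_j$ running over $(\IZ/c_j\IZ)^*$ and $\ell_j \in \IZ$. By Remark~\ref{rem:psi_zero_mu_lower_row_determining}, shifting $d_j \mapsto d_j + c_j$ multiplies $\Psi_{m_j}(M_{c_j,d_j})_{0,\mu_j}$ by $e^{2\pi i \mu_j^2/(4m_j)}$ (after replacing $M$ by $MT$). So the summand has the form $\Psi_{m_1}(M_{c_1,\delta_1})_{0,\mu_1}\Psi_{m_2}(M_{c_2,\delta_2})_{0,\mu_2}\, e^{-2\pi i (\frac{\mu_1^2}{4m_1}\ell_1 + \frac{\mu_2^2}{4m_2}\ell_2)}\, G(\t+\ell_1+\frac{\delta_1}{c_1}, \t+\ell_2+\frac{\delta_2}{c_2})$, where
\begin{equation*}
G(\t_1,\t_2) := \sgn(u_2-u_1)\prod_j c_j^{-2s-\frac32}|\t_j|^{-2s}\t_j^{-\frac32}
\end{equation*}
(with $\t_j$ sharing imaginary part $v$). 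The sign factor $\sgn(\frac{d_2}{c_2}-\frac{d_1}{c_1})$ becomes $\sgn((u+\ell_2+\frac{\delta_2}{c_2})-(u+\ell_1+\frac{\delta_1}{c_1}))$, which depends only on the difference of the two shifted real parts.

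We then apply two-dimensional Poisson summation in $(\ell_1,\ell_2)$, done as iterated one-dimensional applications of Theorem~\ref{thm:poisson_summation}. The function $G$ is $L^1$ in each variable for $\re(s)>\frac14$ but has a jump along $x=y$. The half-value convention of Theorem~\ref{thm:poisson_summation} is compatible with $\sgn(0)=0$, and this is why the symmetric limit $\sum^*$ appears. The resulting Fourier transform at frequencies $(n-k,k)$, with $k\in\IZ_{m_2,\mu_2}$ and $n-k\in\IZ_{m_1,\mu_1}$, is
\begin{equation*}
\int_{\IR^2}\sgn(y-x)\,\frac{e^{-2\pi i (n-k)x-2\pi i k y}}{(x+iv)^{\frac32}|x+iv|^{2s}(y+iv)^{\frac32}|y+iv|^{2s}}\,dx\,dy .
\end{equation*}
Substituting $x\mapsto vx$, $y\mapsto vy$ and using $(vx+iv)=iv(1-ix)$ reproduces \eqref{eq:Kk_definition} up to the factor $e^{2\pi i n\t}$ and the constants. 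The phases $e^{2\pi i (n-k)\delta_1/c_1+2\pi i k\delta_2/c_2}$ combine with the $c_j$-powers and the $\Psi$ factors into $\calL_{m_1,\mu_1}(n-k;s)\calL_{m_2,\mu_2}(k;s)$ by \eqref{eq:arithemtic_part_defn}. The remaining bookkeeping is to track the constants $e^{\frac{\pi i}{4}}$, the factors $i^{-3/2}$, $v^{2-3-2s}$, the prefactor $\pi i s v^{2s}$, and the sign conventions; these should collapse to the overall factor $s$. This is analogous to how \eqref{eq:depth_one_eisenstein_fourier_expansion} arises.

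The main obstacle is justifying the interchange of the sum over $(c_j,\delta_j)$ with the Poisson-summed series, together with convergence of the $k$-sum, which is only conditional because of the $\sgn$ discontinuity. Our first approach is to do Poisson summation in $\ell_1$ first, where the transform is absolutely summable. We then handle the $\ell_2$ sum, whose transform behaves like $\frac{1}{k}$ (this is the $\frac{\d_{\w\notin[0,1)}}{\lfloor\w\rfloor}$ term of Proposition~\ref{prop:analytic_part_calK_properties}). That proposition shows the $k$-summand equals a $\frac{1}{\lfloor k\rfloor}\calI_3(v,n;2s)$ piece, whose symmetric partial sums converge by oddness, plus a piece that is $O(\frac{1}{1+k^2})$ and hence absolutely convergent. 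For the outer sums over $c_1,c_2$ we use the bound of Lemma~\ref{lem:arithmetic_bilinear_absolute_conv_k_bound}, which is uniform in the truncation $k$, so dominated convergence allows passing the limit of symmetric partial sums through the $(c_1,c_2)$ sums for $\re(s)>\frac14$.
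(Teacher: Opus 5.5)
Your outline is the paper's. You reduce $d_j$ modulo $c_j$, do two one-dimensional Poisson summations, split the $k$-summand via Proposition~\ref{prop:analytic_part_calK_properties} into $\frac{1}{\lfloor k\rfloor}\calI_3(v,n;2s)$ plus an $O(\frac{1}{1+k^2})$ remainder, and control the conditionally convergent piece uniformly in the $(\vc{c},\vc{d})$-sums. However, the first Poisson step fails as you set it up.

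\textbf{The first Poisson step needs a shear.} In your coordinates, for fixed $\ell_2$ the summand as a function of $\ell_1$ still carries $\sgn(\ell_2+\frac{\delta_2}{c_2}-\ell_1-\frac{\delta_1}{c_1})$. It therefore has a jump, and its Fourier transform decays only like $\frac{1}{|n_1|}$. So the $\ell_1$-transform is not absolutely summable, you already get a symmetric limit at the first stage, and interchanging that limit with the $\ell_2$-sum is unjustified.

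The missing step is the shear $\ell_2\mapsto\ell_2+\ell_1$ (the paper's $k_2\mapsto k_2+k_1$). After it, the sign depends only on the second variable, and the following hold:
\begin{itemize}
\item Poisson summation in the first variable acts on a smooth function and produces the frequency $n\in\IZ_{\vc{m},\vc{\mu}}$ with exponential decay.
\item The paper's bound $\calN_{n,k_2,v}(s)\ll_s(1+\frac{k_2^2}{v^2})^{-\frac38-\frac{\re(s)}{2}}e^{-\pi|n|v}$, obtained from $(1+a^2)(1+b^2)\ge\frac12(1+(a-b)^2)$, justifies swapping the $n$- and $k_2$-sums and the $k_2$-sum with the $x$-integral.
\item The jump is met only in the second Poisson summation, which yields $\sum^*_k$.
\end{itemize}
Dualizing the shear is exactly why the frequencies come out as $(n-k,k)$ with $n$ summed absolutely. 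Separately, the phase should be $e^{+2\pi i\frac{\mu_j^2}{4m_j}\ell_j}$; with your minus sign the dual frequencies would lie in $\IZ+\frac{\mu_j^2}{4m_j}$ rather than $\IZ_{m_j,\mu_j}$.

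\textbf{Lemma~\ref{lem:arithmetic_bilinear_absolute_conv_k_bound} is not a dominating function.} It bounds the truncated sum after the $(\vc{c},\vc{d})$-summation has been carried out. Dominated convergence instead needs a bound on each $(\vc{c},\vc{d},n)$-term that is uniform in the truncation and summable. Such a bound exists. Put $\theta=\frac{d_2}{c_2}-\frac{d_1}{c_1}$. Up to a phase and one boundary term of size $O(\frac1M)$, the truncated $\frac{1}{\lfloor k\rfloor}$-sum is $2i\sum_{j=1}^M\frac{\sin(2\pi j\theta)}{j}$. These partial sums are bounded by an absolute constant uniformly in $M$ and $\theta$. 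This gives the majorant $\ll c_1^{-2\re(s)-\frac32}c_2^{-2\re(s)-\frac32}|\calI_3(v,n;2s)e^{2\pi in\t}|$, which is summable over $(\vc{c},\vc{d},n)$ for $\re(s)>\frac14$.

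The paper argues differently here:
\begin{itemize}
\item It applies summation by parts to write the sum as $\sum_{k\ge1}\frac{1}{k(k+1)}\sum_{r=1}^k\sin(2\pi r\theta)$.
\item It bounds the inner sum by $|\sin(\pi\theta)|^{-1}$ and uses the counting from the proof of Lemma~\ref{lem:arithmetic_bilinear_absolute_conv_k_bound} to get absolute convergence of the joint $(\vc{c},\vc{d},n,k)$-sum, then interchanges.
\item Only then does it invoke Lemma~\ref{lem:arithmetic_bilinear_absolute_conv_k_bound}, to undo the summation by parts and recover $\sum^*_k$.
\end{itemize}
With these two repairs your argument goes through.
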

\begin{proof}
We start with \eqref{eq:dimension_two_part_Eisenstein} and
using \eqref{eq:weil_T_S_explicit} and \eqref{eq:weil_projective_representation}, we rewrite
\begin{equation}\label{eq:dimension_two_part_Eisenstein_fourier_expansion_step1}
\calF_{\vc{m},\vc{\mu}} (\t, \bar{\t};s)
\hspace{-0.05cm}
=
\hspace{-0.05cm}
\pi is v^{2s} \hspace{-0.25cm} 
\sum_{c_1,c_2 \geq 1} \hspace{-0.2cm} c_1^{-2s-\frac{3}{2}} c_2^{-2s-\frac{3}{2}}
\hspace{-0.33cm}
\sum_{\substack{d_1 \pmod{c_1}^* \\ d_2 \pmod{c_2}^*}} \hspace{-0.5cm}
I_{\vc{m},\vc{\mu},\vc{c},\vc{d}} (\t, \bar{\t}; s) 
\hspace{-0.25cm} \prod_{j \in \{1,2\}} \hspace{-0.25cm}
\Psi_{m_j} \lp M_{c_j,d_j} \rp_{0,\mu_j} 
\!
e^{-2\pi i \frac{d_j}{c_j} \frac{\mu_j^2}{4m_j}},
\end{equation}
where
\begin{equation*}
I_{\vc{m},\vc{\mu},\vc{c},\vc{d}} (\t, \bar{\t}; s) 
:=
\sum_{\vc{k} \in \IZ^2 + \lp \frac{d_1}{c_1},\frac{d_2}{c_2} \rp}
\frac{
\sgn (k_2-k_1)
e^{2 \pi i k_1 \frac{\mu_1^2}{4m_1} + 2 \pi i k_2 \frac{\mu_2^2}{4m_2}}
}{
|\t+k_1|^{2s} (\t+k_1)^{\frac{3}{2}}
|\t+k_2|^{2s} (\t+k_2)^{\frac{3}{2}}
} .
\end{equation*}
Letting $k_2 \mapsto k_2+k_1$ and applying Poisson summation on the sum in $k_1$ then yields
\begin{equation}\label{eq:dimension_two_part_Eisenstein_fourier_expansion_step2}
I_{\vc{m},\vc{\mu},\vc{c},\vc{d}} (\t, \bar{\t}; s) = 
i v^{-4s-2}
e^{2 \pi i \frac{d_1}{c_1} \lp \frac{\mu_1^2}{4m_1}+\frac{\mu_2^2}{4m_2} \rp}
\hspace{-0.5cm}
\sum_{k_2 \in \IZ + \frac{d_2}{c_2} - \frac{d_1}{c_1}} \hspace{-0.6cm}
\sgn (k_2) e^{2 \pi i k_2 \frac{\mu_2^2}{4m_2}} \!\!
\sum_{n \in \IZ_{\vc{m},\vc{\mu}}} \hspace{-0.3cm}
\calN_{n,k_2,v} (s) e^{2 \pi i n \lp u + \frac{d_1}{c_1} \rp},
\end{equation}
where
\begin{equation}\label{eq:calN_defintion_step}
\calN_{n,k_2,v} (s) := 
\int_\IR \frac{e^{-2\pi i n v x}}{
\lp 1-ix \rp^{\frac{3}{2}+s} \lp 1+ix \rp^s
\lp 1-i \lp x + \frac{k_2}{v} \rp \rp^{\frac{3}{2}+s} \lp 1+i \lp x + \frac{k_2}{v} \rp  \rp^s
}
dx  .
\end{equation}
We shift the integration path for $\calN_{n,k_2,v} (s)$ by $-\frac{i}{2}$ if $n>0$ and by $\frac{i}{2}$ if $n \leq 0$.
We then note that $(1+a^2) (1+b^2) \geq \frac{1}{2} (1+(a-b)^2)$ for any $a,b \in \IR$ to find
\begin{equation*}
\begin{rcases}
\vphantom{1^{1^{1^1}}} 
\left| \frac{1}{2} \pm i x \right|
\left|  \frac{1}{2} \pm i \lp x + \frac{k_2}{v} \rp \right|
\\
\left| \frac{3}{2} \pm i x \right|
\left|  \frac{3}{2} \pm i \lp x + \frac{k_2}{v} \rp \right|
\end{rcases}
\geq 
\frac{1}{8} \lp 1 + x^2 \rp^{\frac{1}{4}} 
\lp 1 + \frac{k_2^2}{v^2} \rp^{\frac{1}{4}} .
\end{equation*}
This yields the bound
\begin{equation*}
\calN_{n,k_2,v} (s) \ll_s \lp 1 + \frac{k_2^2}{v^2} \rp^{-\frac{3}{8}- \frac{\re (s)}{2}} e^{-\pi |n| v}.
\end{equation*}
Since $\re (s) > \frac{1}{4}$, this shows that the sum over $(k_2,n)$ in \eqref{eq:dimension_two_part_Eisenstein_fourier_expansion_step2} is absolutely convergent and we can interchange the two sums.
Using $(1+a^2) (1+b^2) \geq \frac{1}{2} (1+(a-b)^2)$, we can similarly interchange the sum over $k_2$ with the integral over $x$ in \eqref{eq:calN_defintion_step}.
The resulting sum over $k_2$ obeys the requirements of Theorem~\ref{thm:poisson_summation} and thus using Poisson summation there yields
\begin{multline}\label{eq:dimension_two_part_Eisenstein_fourier_expansion_step3}
I_{\vc{m},\vc{\mu},\vc{c},\vc{d}} (\t, \bar{\t}; s) = 
i v^{-4s-1}
e^{2 \pi i  \lp \frac{d_1}{c_1} \frac{\mu_1^2}{4m_1}+\frac{d_2}{c_2} \frac{\mu_2^2}{4m_2} \rp}
\sum_{n \in \IZ_{\vc{m},\vc{\mu}}} 
e^{2 \pi i n \lp u + \frac{d_1}{c_1} \rp}
\\ \times
\int_\IR 
\frac{e^{-2\pi i n v x}}{\lp 1-ix \rp^{\frac{3}{2}} \lp 1+x^2 \rp^s} 
\ \ 
\psum_{k \in \IZ_{m_2,\mu_2}}
e^{2 \pi i k \lp \frac{d_2}{c_2}-\frac{d_1}{c_1} \rp}
\int_\IR 
\frac{\sgn (y-x) e^{-2\pi i k v (y-x)}}{\lp 1-iy \rp^{\frac{3}{2}} \lp 1+y^2 \rp^s}  
dy dx. 
\end{multline}
Using the decomposition \eqref{eq:details_analytic_part_calK_properties_intParts}
and the bound \eqref{eq:integral_phi_kappa_bound}, we can remove the symmetric summation for the sum over $k$ and interchange the integral in $x$ and the sum in $k$ for the term with $\phi_{\frac{3}{2}}$. This yields
\begin{multline*}
\pi i v^{-2s-1} e^{2 \pi n v} \int_\IR 
\frac{e^{-2\pi i n v x}}{\lp 1-ix \rp^{\frac{3}{2}} \lp 1+x^2 \rp^s} 
\ 
\psum_{k \in \IZ_{m_2,\mu_2}} 
e^{2 \pi i k \lp \frac{d_2}{c_2}-\frac{d_1}{c_1} \rp}
\int_\IR 
\frac{\sgn (y-x) e^{-2\pi i k v (y-x)}}{\lp 1-iy \rp^{\frac{3}{2}} \lp 1+y^2 \rp^s}  
dy dx
\\
=
\calI_3 (v,n;2s) \psum_{\substack{k \in \IZ_{m_2,\mu_2} \\ k \not\in [0,1)}} 
\frac{e^{2 \pi i k \lp \frac{d_2}{c_2}-\frac{d_1}{c_1} \rp}}{\lfloor k \rfloor}
+
\sum_{k \in \IZ_{m_2,\mu_2}} \!\!\!\!
e^{2 \pi i k \lp \frac{d_2}{c_2}-\frac{d_1}{c_1} \rp} 
\lp \calK (v,n,k;s) - \frac{\d_{k \not\in [0,1)}}{\lfloor k \rfloor} \calI_3 (v,n;2s)  \rp .
\end{multline*}
We next plug this into \eqref{eq:dimension_two_part_Eisenstein_fourier_expansion_step3}, which we then use in \eqref{eq:dimension_two_part_Eisenstein_fourier_expansion_step1}.
By Proposition \ref{prop:analytic_part_calK_properties}, the sum over $(\vc{c}, \vc{d}, n, k)$ for the contribution of the second term is absolutely convergent because $\re (s) > \frac{1}{4}$.
So we can split off this term and interchange the sums to obtain
\begin{equation*}%\label{eq:calF_calF1_calF2_decomposition}
\calF_{\vc{m},\vc{\mu}} (\t, \bar{\t};s) =
\calF^{[1]}_{\vc{m},\vc{\mu}} (\t, \bar{\t};s) +
\calF^{[2]}_{\vc{m},\vc{\mu}} (\t, \bar{\t};s),
\end{equation*}
where
\begin{align}\label{eq:calF1_definition}
\calF^{[1]}_{\vc{m},\vc{\mu}} (\t, \bar{\t};s) 
&:= 
i s 
\sum_{c_1,c_2 \geq 1} 
c_1^{-2s-\frac{3}{2}} c_2^{-2s-\frac{3}{2}}
\sum_{\substack{d_1 \pmod{c_1}^* \\ d_2 \pmod{c_2}^*}} 
\Psi_{m_1} (M_{c_1,d_1})_{0,\mu_1} 
\Psi_{m_2} (M_{c_2,d_2})_{0,\mu_2}
\\ & \hspace{5cm} \times
\sum_{n \in \IZ_{\vc{m},\vc{\mu}}} 
\calI_3 (v,n;2s) e^{2 \pi i n \lp \t + \frac{d_1}{c_1} \rp}
\psum_{\substack{k \in \IZ_{m_2,\mu_2} \\ k \not\in [0,1)}} 
\frac{e^{2 \pi i k \lp \frac{d_2}{c_2}-\frac{d_1}{c_1} \rp}}{\lfloor k \rfloor},
\notag\\
\calF^{[2]}_{\vc{m},\vc{\mu}} (\t, \bar{\t};s)
&:= s \!\! \sum_{n \in \IZ_{\vc{m},\vc{\mu}}} \!\!\!\! e^{2 \pi i n \t} 
\!\!\! \sum_{k \in \IZ_{m_2,\mu_2}} \!\!\!\!
\lp \calK (v,n,k;s) 
-  \frac{\d_{k \not\in [0,1)}}{\lfloor k \rfloor} \calI_3 (v,n;2s) \! \rp \!
\calL_{m_1,\mu_1} (n-k;s) \calL_{m_2,\mu_2} (k;s).
\notag
\end{align}
Now we focus on $\calF^{[1]}_{\vc{m},\vc{\mu}} (\t, \bar{\t};s)$. Using summation by parts for the sum over $k$, we obtain
\begin{equation*}
\psum_{\substack{k \in \IZ_{m_2,\mu_2} \\ k \not\in [0,1)}} 
\frac{e^{2 \pi i k \lp \frac{d_2}{c_2}-\frac{d_1}{c_1} \rp}}{\lfloor k \rfloor}
=
2 i e^{2\pi i \musqmbig \lp \frac{d_2}{c_2}-\frac{d_1}{c_1} \rp}
\sum_{k \geq 1} \frac{1}{k(k+1)} 
\sum_{r=1}^k \sin \lp 2 \pi r \lp \frac{d_2}{c_2}-\frac{d_1}{c_1} \rp \rp.
\end{equation*}
Plugging this into \eqref{eq:calF1_definition} and using Lemma~\ref{lem:analytic_part_calI_properties}~(1), to get the bound
\begin{multline*}
\Bigg| c_1^{-2s-\frac{3}{2}} c_2^{-2s-\frac{3}{2}} 
\calI_3 (v,n;2s) e^{2 \pi i n \lp \t + \frac{d_1}{c_1} \rp}
\frac{1}{k(k+1)} 
\sum_{r=1}^k \sin \lp 2 \pi r \lp \frac{d_2}{c_2}-\frac{d_1}{c_1} \rp \rp
\Bigg|
\\
\ll_{s,v}
c_1^{-2\re(s)-\frac{3}{2}} c_2^{-2\re(s)-\frac{3}{2}} 
\frac{e^{-\pi |n| v}}{k(k+1)} 
\frac{\d_{c_1 d_2 - c_2 d_1 \not\equiv 0 \pmod{c_1c_2}}}{
\left| \sin \lp \pi \frac{c_1 d_2 - c_2 d_1}{c_1 c_2}  \rp \right|},
\end{multline*}
we find that the resulting joint sum over $(\vc{c},\vc{d},n,k)$ is absolutely convergent for $\re (s) > \frac{1}{4}$.
In particular, we can interchange the sums over $\vc{c},\vc{d},n,k$, to rewrite
\begin{multline*}
\calF^{[1]}_{\vc{m},\vc{\mu}} (\t, \bar{\t};s) = 
-2 s 
\sum_{n \in \IZ_{\vc{m},\vc{\mu}}}  
\calI_3 (v,n;2s) e^{2 \pi i n \t}
\sum_{k \geq 1} \frac{1}{k(k+1)} 
\sum_{c_1,c_2 \geq 1} 
c_1^{-2s-\frac{3}{2}} c_2^{-2s-\frac{3}{2}}
\\ \times
\sum_{\substack{d_1 \pmod{c_1}^* \\ d_2 \pmod{c_2}^*}} \hspace{-0.4cm}
\Psi_{m_1} (M_{c_1,d_1})_{0,\mu_1} 
\Psi_{m_2} (M_{c_2,d_2})_{0,\mu_2}
e^{2 \pi i n \frac{d_1}{c_1}}
e^{2\pi i \musqmbig \lp \frac{d_2}{c_2}-\frac{d_1}{c_1} \rp}
\sum_{r=1}^k \sin \lp 2 \pi r \lp \frac{d_2}{c_2}-\frac{d_1}{c_1} \rp \rp .
\end{multline*}
Commuting the finite sum over $r$ with the sums over $\vc{c},\vc{d}$ and using \eqref{eq:arithemtic_part_defn}, we evaluate
\begin{equation*}
\calF^{[1]}_{\vc{m},\vc{\mu}} (\t, \bar{\t};s)
=  s \sum_{n \in \IZ_{\vc{m},\vc{\mu}}} \!\!\!\! 
\calI_3 (v,n;2s) e^{2 \pi i n \t}
\sum_{k \geq 1} \frac{1}{k(k+1)}
\sum_\pm \sum_{r \in \pm [k] + \musqm} \hspace{-0.4cm}
\pm \calL_{m_1,\mu_1} (n-r;s) \calL_{m_2,\mu_2} (r;s) .
\end{equation*}
Reversing the summation by parts step above, we use the bound in Lemma \ref{lem:arithmetic_bilinear_absolute_conv_k_bound}, to get
\begin{equation*}
\psum_{k \in \IZ_{m_2,\mu_2}} 
\!\!\! \frac{\d_{k \not\in [0,1)}}{\lfloor k \rfloor} 
\calL_{m_1,\mu_1} (n-k;s) \calL_{m_2,\mu_2} (k;s)
\!=\!
\sum_{k \geq 1} \! \frac{1}{k(k+1)} \!
\sum_\pm \! \sum_{r \in \pm [k] + \musqm} \hspace{-0.75cm}
\pm \calL_{m_1,\mu_1} (n-r;s) \calL_{m_2,\mu_2} (r;s) .
\end{equation*}
Plugging this into $\calF^{[1]}_{\vc{m},\vc{\mu}} (\t, \bar{\t};s)$ and then combining with $\calF^{[2]}_{\vc{m},\vc{\mu}} (\t, \bar{\t};s)$ gives the proposition.
\end{proof}

Next we give the analytic continuation of $\calF_{\vc{m},\vc{\mu}} (\t, \bar{\t};s)$.

\begin{prop}\label{prop:dimension_two_part_Eisenstein_analytic_continuation}
Let $m_1,m_2 \!\in \IN$ be odd, square-free, $\mu_j \!\in \IZ/2m_j\IZ$ for $j \!\in\! \{1,2\}$, $\t \in \IH$, and $s \in \IC$.
Then $s \mapsto\! \calF_{\vc{m},\vc{\mu}} (\t, \bar{\t};s)$ analytically continues to $\re (s)\! > \! - \frac{1}{20}$ and $\calF_{\vc{m},\vc{\mu}} (\t, \bar{\t}) := \calF_{\vc{m},\vc{\mu}} (\t, \bar{\t};0)$ equals
\begin{multline*}
\calF_{\vc{m},\vc{\mu}} (\t, \bar{\t}) = 
H_{\vc{m},\vc{\mu}} (\t) 
+ C_{m_1} C_{m_2} 
\int_{-\bar{\t}}^{i \infty} 
\frac{\TH_{m_2, \mu_2} (w_2)}{(-i(w_2+\t))^{\frac{3}{2}}} 
\int_{w_2}^{i \infty} 
\frac{\TH_{m_1, \mu_1} (w_1)}{(-i(w_1+\t))^{\frac{3}{2}}} 
d w_1 dw_2 
\\
- C_{m_1} C_{m_2} 
\int_{-\bar{\t}}^{i \infty} 
\frac{\TH_{m_1, \mu_1} (w_1)}{(-i(w_1+\t))^{\frac{3}{2}}} 
\int_{w_1}^{i \infty} 
\frac{\TH_{m_2, \mu_2} (w_2)}{(-i(w_2+\t))^{\frac{3}{2}}} 
d w_2 dw_1 .
\end{multline*}
\end{prop}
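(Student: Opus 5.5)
Starting from the Fourier expansion of Proposition~\ref{prop:dimension_two_part_Eisenstein_fourier_expansion}, I would insert the decomposition \eqref{eq:Lreg_Lsing_decomp_definition} for both $\calL_{m_1,\mu_1}(n-k;s)$ and $\calL_{m_2,\mu_2}(k;s)$. The prefactor $(2\sqrt{m_1m_2}\,\s_{-2s-1}(m_1)\s_{-2s-1}(m_2)\z^2(4s+2))^{-1}$ is holomorphic near $\re(s)\ge -\frac{1}{20}$. Expanding the product then gives four pieces: reg--reg, reg--sing and sing--reg (each with a factor $\frac{1}{4s}$), and sing--sing (with a factor $\frac{1}{16s^2}$). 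Together with the overall factor $s$, the reg--reg piece is harmless at $s=0$. The sing--reg pieces need one power of $s$ cancelled, and the sing--sing piece needs $\frac{1}{s}$ extra.

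For convergence I would split $\calK(v,n,k;s)=\frac{\d_{k\notin[0,1)}}{\lfloor k\rfloor}\calI_3(v,n;2s)+(\calK-\ldots)$, as in the proof of Proposition~\ref{prop:dimension_two_part_Eisenstein_fourier_expansion}. For the remainder, Proposition~\ref{prop:analytic_part_calK_properties} gives decay $\frac{e^{-\pi|n|v}}{1+k^2}$. Combined with the pointwise bounds of Lemma~\ref{lem:Lreg_Lsing_bound}, which give growth $k^{\frac{4}{3}\max\{-\re s,0\}+\e}$ for the product of two $\Lreg{}$ factors, this yields absolute, locally uniform convergence in $(n,k)$ for $\re(s)>-\frac14$. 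For the $\calI_3$-term I would sum by parts in $k$, which gives weights $\frac{1}{k(k+1)}$ against the signed partial sums $\sum_\pm\sum_{r\in\pm[k]+\ldots}$. Here Lemma~\ref{lem:arithmetic_bilinear_sing_reg_bounds} and Proposition~\ref{prop:arithmetic_bilinear_reg_reg_bound} are exactly what is needed. The reg--reg partial sums grow like $k^{\frac12+\frac{5}{12}(1-4\re s)+\e}$, and convergence against $k^{-2}$ requires $\frac12+\frac{5}{12}(1-4\re s)<1$, i.e.\ $\re(s)>-\frac{1}{20}$. This is where the half-plane comes from. The $(1+|n|)$ factors are absorbed by $e^{-\pi|n|v}$ from Lemma~\ref{lem:analytic_part_calI_properties}~(1).

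To handle the poles I would observe that $\Lsing{}$ is supported on $-4m(\cdot)=\square$, i.e.\ on nonpositive arguments. For the sing--reg terms with $k\le 0$, or $n-k\le 0$, the argument of the other factor must be analysed. The key point is that $\frac1s\calK$ is bounded whenever both $\w\le0$ and $t-\w\le0$ (Proposition~\ref{prop:analytic_part_calK_properties_negative}), and $\calK(v,t,\w;0)=0$ for $t\le0$ (Proposition~\ref{prop:analytic_part_calK_value_at_zero}). In the sing--sing term both $k\le0$ and $n-k\le0$ hold, so $\calK=O(s)$ and the $\frac{1}{s^2}$ becomes $\frac1s$, leaving a simple pole. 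I expect this is cancelled because the $\pm$ antisymmetry kills the $s^{-1}$ coefficient. More precisely, I would use $\calK'$ from Proposition~\ref{prop:analytic_part_calK_properties_negative} combined with $\calK$ at $s=0$ vanishing when $t\le0$. For the sing--sing contribution I would write $\calK=s\calK'(0)+O(s^2)$, so the product is $\frac{1}{16}\calK'(0)$ times $\Lsing{}\Lsing{}$ at $0$, which is finite. So only a $\frac1s\cdot s=O(1)$ remains. For sing--reg with $k\le0$ but $n-k>0$, I would split again: when $n\le 0$, use $\frac{\calK}{s}$ bounded only if $t-\w\le0$. Otherwise I would expand $\calK(v,n,k;s)=\calK(v,n,k;0)+O(s)$, so that $\frac{1}{4s}\cdot s\cdot\calK(0)$ is finite. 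Uniformity of the $O(s)$ remainders in $(n,k)$ I would get from Cauchy's estimate on a small circle using the bounds above.

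Evaluating at $s=0$, there are two contributions. The terms with $n>0$ give $\calK(v,n,k;0)$ from Remark~\ref{rem:calK_zero_value_3_over_2}, with coefficients computed via Lemma~\ref{lem:arithmetic_part_at_origin}: $\Lreg{}(\cdot;0)$ yields Hurwitz class numbers and $\Lsing{}(\cdot;0)$ yields the theta-type indicator $\d_{-mk^2=\cdot}$. Collecting reg--sing with $k=-m_2\ell^2$ and sing--reg with $n-k=-m_1\ell^2$ should produce $\b_{\vc m,\vc\mu}(n)$. For example, $2\w-t+2\sqrt{\w(\w-t)}$ with $\w=-m_2\ell^2$ becomes $-(\sqrt{n+m_2\ell^2}-|\ell|\sqrt{m_2})^2$ up to constants. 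I expect the reg--reg term at $s=0$ to vanish or combine, since there the overall factor $s$ survives. The terms with $n\le0$ in the sing--sing part give $\frac{1}{16}\calK'(v,n,k;0)\Lsing{}\Lsing{}$. Inserting the double integral formula for $\calK'$ and unfolding the theta series $\TH_{m_j,\mu_j}$, as in Proposition~\ref{prop:depth_one_Eichler_eisenstein_analytic_continuation}, gives the two iterated Eichler integrals with constant $C_{m_1}C_{m_2}$. The main obstacle is the bookkeeping of the sign/region cases of $(t,\w)$ and verifying that no residual $\frac1s$ survives in the mixed regions. If cancellation fails there, I would redo the expansion using the $\calN$-decomposition \eqref{eq:calK_calN_rewriting}.
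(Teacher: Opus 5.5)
Your plan is essentially the paper's proof: you insert \eqref{eq:Lreg_Lsing_decomp_definition}, sum by parts in the $\calI_3$-part so that Proposition~\ref{prop:arithmetic_bilinear_reg_reg_bound} gives exactly the threshold $\re(s)>-\frac{1}{20}$, and evaluate at $s=0$ via Remark~\ref{rem:calK_zero_value_3_over_2}, Lemma~\ref{lem:arithmetic_part_at_origin}, and the formula for $\calK'$ in Proposition~\ref{prop:analytic_part_calK_properties_negative}; the paper sums by parts only in the reg--reg piece and treats the pieces containing a $\Lsing{}$ factor by plain absolute convergence, since $\Lsing{}$ lives on a sparse set of arguments. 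You can drop your worries about leftover poles: $s\cdot\frac{1}{4s}=\frac14$ makes the mixed terms holomorphic outright, with no expansion of $\calK$ in $s$ and no Cauchy estimates, and in the sing--sing range $k\le 0$, $n-k\le 0$ the term $s\cdot\frac{1}{16s^2}\calK=\frac{1}{16}\cdot\frac{\calK}{s}$ is bounded directly by Proposition~\ref{prop:analytic_part_calK_properties_negative}, so no antisymmetry cancellation is needed.
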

\begin{proof}
First we assume that $\re (s) > \frac{1}{4}$. Using \eqref{eq:Lreg_Lsing_decomp_definition} and Proposition \ref{prop:dimension_two_part_Eisenstein_fourier_expansion}, while defining
\begin{equation*}%\label{eq:gamma_m_definition}
\g_{\vc{m}} (s) := \frac{1}{2 \sqrt{m_1 m_2} \s_{-2s-1} (m_1) \s_{-2s-1} (m_2) \z^2 (4s+2)},
\end{equation*}
we rewrite
\begin{multline*}
\calF_{\vc{m},\vc{\mu}} (\t, \bar{\t};s)
= s  \g_{\vc{m}} (s) 
\sum_{n \in \IZ_{\vc{m},\vc{\mu}}} \hspace{-0.3cm} e^{2 \pi i n \t} 
\psum_{k \in \IZ_{m_2,\mu_2}} \hspace{-0.2cm}
\calK (v,n,k;s) 
\lp \Lreg{m_1,\mu_1} (n-k;s) + \frac{1}{4s} \Lsing{m_1,\mu_1} (n-k;s) \rp
\\[-2ex] \times
\lp \Lreg{m_2,\mu_2} (k;s) + \frac{1}{4s} \Lsing{m_2,\mu_2} (k;s) \rp .
\end{multline*}
Using the bounds in Lemma~\ref{lem:analytic_part_calI_properties}~(1), Lemma~\ref{lem:Lreg_Lsing_bound}, and Proposition~\ref{prop:analytic_part_calK_properties} for $\re (s) \!>\! \frac{1}{4}$, we  split this as
\begin{align}
\notag
&\calF_{\vc{m},\vc{\mu}} (\t, \bar{\t};s)
=
s  \g_{\vc{m}} (s) 
\sum_{n \in \IZ_{\vc{m},\vc{\mu}}}  \sum_{k \geq 1}
\frac{\calI_3 (v,n;2s)e^{2 \pi i n \t}}{k(k+1)}
\sum_\pm \!\! \sum_{r \in \pm [k] + \musqm} \hspace{-0.5cm}
\pm \Lreg{m_1,\mu_1} (n-r;s) \Lreg{m_2,\mu_2} (r;s)
\notag \\[-0.75ex]
& \quad +
s  \g_{\vc{m}} (s) 
\sum_{n \in \IZ_{\vc{m},\vc{\mu}}} \sum_{k \in \IZ_{m_2,\mu_2}}  \hspace{-0.3cm} 
\lp \calK (v,n,k;s) 
- \frac{\d_{k \not\in [0,1)}}{\lfloor k \rfloor}  \calI_3 (v,n;2s) \rp \!
\Lreg{m_1,\mu_1} (n-k;s) \Lreg{m_2,\mu_2} (k;s)
e^{2 \pi i n \t}
\notag \\[-0.75ex] \notag 
& \quad  +
\frac{\g_{\vc{m}} (s)}{4}
\sum_{n \in \IZ_{\vc{m},\vc{\mu}}} \sum_{k \in \IZ_{m_2,\mu_2}}  \hspace{-0.3cm} 
\calK (v,n,k;s) 
\Lreg{m_1,\mu_1} (n-k;s) \Lsing{m_2,\mu_2} (k;s)
e^{2 \pi i n \t}
\\[-0.75ex] \notag 
& \quad +
\frac{\g_{\vc{m}} (s)}{4}
\sum_{n \in \IZ_{\vc{m},\vc{\mu}}} \sum_{k \in \IZ_{m_2,\mu_2}}  \hspace{-0.3cm} 
\calK (v,n,k;s) 
\Lsing{m_1,\mu_1} (n-k;s) \Lreg{m_2,\mu_2} (k;s)
e^{2 \pi i n \t}
\\[-0.75ex]
& \quad  +
\frac{\g_{\vc{m}} (s)}{16}
\sum_{n \in \IZ_{\vc{m},\vc{\mu}}}
\sum_{\substack{k \in \IZ_{m_2,\mu_2} \\ n \leq k \leq 0}} \hspace{-0.2cm}
\frac{\calK (v,n,k;s)}{s}
\Lsing{m_1,\mu_1} (n-k;s) \Lsing{m_2,\mu_2} (k;s) e^{2 \pi i n \t}.
\label{eq:calF_rewriting_to_be_continued}
\end{align}
Here we apply summation by parts while noting Proposition \ref{prop:arithmetic_bilinear_reg_reg_bound} for the first term, note the absolute convergence of the last four terms to remove the symmetric summation, and 
use that the contributions from $\mathcal{L}^{\mathrm{sing}}\mathcal{L}^{\mathrm{sing}}$ are only nonzero for $k \leq 0$ and $n-k \leq 0$ by \eqref{eq:Lsing_definition} for the last term.

Now note that $s$ and $\g_{\vc{m}} (s)$ along with all the summands in these sums in $(n,k)$ are holomorphic for $\re (s) > -\frac{1}{20}$.
We next prove that these sums in $(n,k)$ are absolutely and locally uniformly convergent for $\re (s) > -\frac{1}{20}$, which in turn proves the claimed analytic continuation. 
Thus we fix an arbitrary $0 < \e < \frac{1}{20}$ and assume
\begin{equation*}
-\frac{1}{20} + \e \leq \re (s) \leq \frac{1}{\e}
\andd
| \im (s) | \leq \frac{1}{\e} .
\end{equation*}
In this range, the summand of the first sum  in \eqref{eq:calF_rewriting_to_be_continued} is bounded for $n \in \IZ_{\vc{m},\vc{\mu}}$ and $k \in \IN$ as
\begin{equation*}
\frac{\calI_3 (v,n;2s) e^{2 \pi i n \t}}{k(k+1)}
\sum_\pm \!\! \sum_{r \in \pm [k] + \musqm} \hspace{-0.6cm}
\pm \Lreg{m_1,\mu_1} (n-r;s) \Lreg{m_2,\mu_2} (r;s)
\ll_{\vc{m},v,\e} 
\lp 1 + |n| \rp 
e^{-\pi |n| v}
k^{-1-\frac{2\e}{3}} 
\end{equation*}
using Lemma~\ref{lem:analytic_part_calI_properties}~(1) and Proposition~\ref{prop:arithmetic_bilinear_reg_reg_bound}. This uniform bound in $s$ is summable over $n \in \IZ_{\vc{m},\vc{\mu}}$ and $k \in \IN$  and the first sum converges to a holomorphic function for $\re (s) > -\frac{1}{20}$.
For the fifth sum in \eqref{eq:calF_rewriting_to_be_continued}, we estimate for $n \in \IZ_{\vc{m},\vc{\mu}}$ and $k \in \IZ_{m_2,\mu_2}$ with $k,n-k \leq 0$ as
\begin{equation*}
\frac{\calK (v,n,k;s)}{s} 
\Lsing{m_1,\mu_1} (n-k;s) \Lsing{m_2,\mu_2} (k;s)
e^{2 \pi i n \t}
\hspace{-0.1cm} \ll_{\vc{m},v,\e} \hspace{-0.07cm}
e^{-\pi |n| v} 
\lp 1 \!+\! |n-k| \!+\! |k| \rp \! \d_{-4m_1(n-k)=\square}  \d_{-4m_2k=\square}
\end{equation*}
using Proposition \ref{prop:analytic_part_calK_properties_negative} and Lemma \ref{lem:Lreg_Lsing_bound} (while weakening the exponents of $|n-k|$ and $|k|$).
This bound is similarly summable for the relevant range of $(n,k)$.
The remaining three sums in \eqref{eq:calF_rewriting_to_be_continued} are treated analogously with Lemma~\ref{lem:analytic_part_calI_properties}~(1), Lemma~\ref{lem:Lreg_Lsing_bound}, and Proposition~\ref{prop:analytic_part_calK_properties}, as above.

With the analytic continuation proved, we compute $\calF_{\vc{m},\vc{\mu}} (\t, \bar{\t})$ by setting $s=0$ in \eqref{eq:calF_rewriting_to_be_continued}, to find
\begin{align*}
\notag
\calF_{\vc{m},\vc{\mu}} (\t, \bar{\t})
&=
\frac{\g_{\vc{m}} (0)}{4}
\sum_{n \in \IZ_{\vc{m},\vc{\mu}}^{>0}} \hspace{-0.2cm}
e^{2 \pi i n \t}
\sum_{k \in \IZ_{m_2,\mu_2}^{\leqslant 0}} \hspace{-0.2cm}
\calK (v,n,k;0) 
\Lreg{m_1,\mu_1} (n-k;0) \Lsing{m_2,\mu_2} (k;0)
\\ \notag
& \quad +
\frac{\g_{\vc{m}} (0)}{4}
\sum_{n \in \IZ_{\vc{m},\vc{\mu}}^{>0}} \hspace{-0.2cm}
e^{2 \pi i n \t}
\sum_{k \in \IZ_{m_1,\mu_1}^{\leqslant 0}} \hspace{-0.2cm}
\calK (v,n,n-k;0) 
\Lreg{m_2,\mu_2} (n-k;0) \Lsing{m_1,\mu_1} (k;0)
\\ 
& \quad  +
\frac{\g_{\vc{m}} (0)}{16}
\sum_{n \in \IZ_{m_1,\mu_1}^{\leqslant 0}} 
\sum_{k \in \IZ_{m_2,\mu_2}^{\leqslant 0}} \hspace{-0.3cm}
\calK' (v,n+k,k;0) \Lsing{m_1,\mu_1} (n;0) \Lsing{m_2,\mu_2} (k;0) 
e^{2 \pi i (n+k) \t},
\end{align*}
where we let $k \mapsto n-k$ for the second line and $n \mapsto n+k$ for the third line, while noting that $\calK (v,n,k;0) = 0$ for $n \leq 0$ by  Proposition~\ref{prop:analytic_part_calK_value_at_zero} and $\Lsing{m_j,\mu_j} (r;0) = 0$ for $r>0$ and $j \!\in\! \{1,2\}$ by \eqref{eq:Lsing_definition}.
Using the expressions
\begin{equation*}
\frac{\g_{\vc{m}} (0)}{4}
= \frac{9 \sqrt{m_1 m_2}}{2\pi^4 \s(m_1) \s(m_2)} 
\andd
\frac{\g_{\vc{m}} (0)}{16}
=
\frac{9 \sqrt{m_1 m_2}}{8 \pi^4 \s(m_1) \s(m_2)}
= - \frac{C_{m_1} C_{m_2}}{4 \pi^2}
\end{equation*}
along with \eqref{eq:calK_3_2_values_at_zero} for $\calK (v,n,k;0)$ and $\calK (v,n,n-k;0)$, Proposition \ref{prop:analytic_part_calK_properties_negative} for $\calK' (v,n+k,k;0)$, and Lemma~\ref{lem:arithmetic_part_at_origin} for the values of $\mathcal{L}^{\mathrm{reg/sing}}$, we find the claimed expression for $\calF_{\vc{m},\vc{\mu}} (\t, \bar{\t})$ in the proposition. Here we recall the Fourier coefficients $\b_{\vc{m},\vc{\mu}} (n)$ of $H_{\vc{m},\vc{\mu}}$ given in \eqref{eq:depth_two_holomorphic_fourier_coef_definition}.
\end{proof}

We now have everything needed to prove Theorem~\ref{thm:coupled_Eisenstein_depth_two_analytic_continuation}.

\begin{proof}[Proof of Theorem~\ref{thm:coupled_Eisenstein_depth_two_analytic_continuation}]
Starting with the rewriting of $\wh{H}_{\vc{m},\vc{\mu}} (\t, \bar{\t})$ in \eqref{eq:hatH_rewriting2_g_f_F} for $\re(s) > \frac{1}{4}$ and recalling the continuation of $g_{m_j,\mu_j}$, \smash{$\wh{f}_{m_j,\mu_j}$}, and $\calF_{\vc{m},\vc{\mu}}$ to $\re (s) > - \frac{1}{20}$ from Propositions
\ref{prop:depth_one_Eichler_eisenstein_analytic_continuation},
\ref{prop:depth_one_eisenstein_analytic_continuation}, and~\ref{prop:dimension_two_part_Eisenstein_analytic_continuation}, respectively, the analytic continuation of $\wh{H}_{\vc{m},\vc{\mu}} (\t, \bar{\t};s)$ to $\re (s) > - \frac{1}{20}$ immediately follows.
These results also give the value at $s=0$ as claimed.
The derivative in $\bar{\t}$ immediately follows and the modular transformation follows from the analytic continuation of \eqref{eq:whH_modular_transformation} to $s=0$. 
\end{proof}

\section{An Example from Vafa--Witten invariants}
\label{sec:vafa_witten_example}

In this section we prove Corollary~\ref{cor:h_3_mu_coupled_eisenstein_expression} and relate our discussion to the depth two mock modular form $h_{3,\mu}$ coming from Vafa--Witten invariants.
Recall from \eqref{eq:h_3_0_1_fourier_expansion_vafa_witten}, \eqref{eq:depth_two_example}, and \eqref{eq:depth_two_example_modular_transformation} that $h_{3,\mu}$ with $\mu\!\in\!\IZ/3\IZ$ is a depth two mock modular form of weight $3$ and multiplier $\Psi^*_{\! A_2}$.
Using Proposition~\ref{prop:depth_one_eisenstein_analytic_continuation} with \eqref{eq:Weil_three_decomposition} and \eqref{eq:hatf_modular_transformation}, we may also form a trivial depth two mock modular form with the same weight and multiplier, combining multiples of two depth one mock modular forms as
\begin{equation*}
\sum_{\a \in \IZ/2\IZ} f_{1,\a} (\t) f_{3,2\mu+3\a} (\t) 
\quad \mbox{for } \mu \in \IZ/3\IZ.
\end{equation*}
Here we recall the details on $f_{1,\a}$ given in \eqref{eq:f1_fourier_coef_hurwitz} and \eqref{eq:f1_modular_completion}.
For $f_{3,\ell}$ with $\ell \in \IZ/6\IZ$, we specialize Proposition~\ref{prop:depth_one_eisenstein_analytic_continuation} to $m=3$, obtaining its Fourier expansion as
\begin{align*}%\label{eq:f3_fourier_coef_hurwitz}
f_{3,0} (\t) &= -3 \sum_{n\geq 0} \lp H(12n) + 3 \d_{3 \mid 4n} H\lp \frac{4n}{3} \rp \rp q^n,
\\[-0.75ex] \notag
f_{3,\pm 1} (\t) &= -3 \sum_{n\geq 0}  H(12n+11) q^{n+\frac{11}{12}},
\hspace{1.2cm}
f_{3,\pm 2} (\t) = -3 \sum_{n\geq 0}  H(12n+8) q^{n+\frac{2}{3}},
\\[-0.75ex] \notag
f_{3,3} (\t) &= -3 \sum_{n\geq 0} \lp H(12n+3) + 3 \d_{3 \mid 4n+1} H \lp \frac{4n+1}{3} \rp \rp q^{n+\frac{1}{4}}.
\end{align*}
The leading Fourier coefficients are
\begin{equation*}
\mat{
f_{3,0} (\t) \\
f_{3,\pm 1} (\t) \\
f_{3,\pm 2} (\t) \\
f_{3,3} (\t)
}
=
\mat{
1 - 4q - 6q^2 - 12q^3 - 10q^4 - 12q^5 - 18q^6 - 12q^7 - 18q^8 + \ldots \\
-3q^{\frac{11}{12}} - 9q^{\frac{23}{12}} - 6q^{\frac{35}{12}} - 15q^{\frac{47}{12}} - 9q^{\frac{59}{12}} - 21q^{\frac{71}{12}} - 9q^{\frac{83}{12}} + \ldots \\
-3q^{\frac23} - 6q^{\frac53} - 9q^{\frac83} - 12q^{\frac{11}3} - 12q^{\frac{14}3} - 12q^{\frac{17}3} - 18q^{\frac{20}3} - 18q^{\frac{23}3} + \ldots \\
-q^{\frac14} - 6q^{\frac54} - 7q^{\frac94} - 12q^{\frac{13}4} - 6q^{\frac{17}4} - 
 24q^{\frac{21}4} - 7q^{\frac{25}4} - 18q^{\frac{29}4} + \ldots
} .
\end{equation*}
Noting $\TH_{3, \ell} = 2 \vth_{3, \ell}$, its modular completion is 
\begin{equation*}
\wh{f}_{3,\ell} (\t, \bar{\tau}) = f_{3,\ell} (\t)
+ \frac{3i\sqrt{3}}{2 \pi \sqrt{2}} \int_{-\bar{\t}}^{i \infty} 
\frac{\vth_{3, \ell} (w)}{(-i(w+\t))^{\frac{3}{2}}} dw .
\end{equation*}
Together with \eqref{eq:f1_fourier_coef_hurwitz_examples}, this gives
\begin{equation}\label{eq:f1_f3_leading_fourier_coefs}
\Bigg( \displaystyle \sum_{\a \in \{0,1\}} f_{1,\a} (\t)  f_{3,2\mu+3\a} (\t) \Bigg)_{\! \mu \in \{0,\pm 1\}}
\\
=
\mat{
1 - 6 q + 42 q^2 + 168 q^3 + 408 q^4 + 744 q^5 + \ldots \\
-3 q^{\frac23} + 24 q^{\frac53} + 135 q^{\frac83} + 330 q^{\frac{11}3} + 630 q^{\frac{14}3}  + \ldots 
}  .
\end{equation}

Theorem \ref{thm:coupled_Eisenstein_depth_two_analytic_continuation} allows us to go beyond such trivial depth two mock modular forms, using coupled Eisenstein series. Specifically, for $\vc{m} = (1,3)$ we obtain a depth two mock modular form $H_{(1,3), (\a,2\mu+3\g)}$ with $\a,\g \in \IZ/2\IZ$, $\mu \in \IZ/3\IZ$, whose modular completion satisfies (also by~\eqref{eq:Weil_three_decomposition})
\begin{equation*}
%\label{eq:wh_H_example_depth_two_modular_transformation}
\wh{H}_{(1,3), (\a,2\mu+3\g)} 
\lp \frac{a\t+b}{c \t+d}, \frac{a\bar{\t}+b}{c\bar{\t}+d} \rp
\!=\! (c \t + d)^3 \hspace{-0.4cm}
\sum_{\substack{\b,\l \in \IZ/2\IZ \\ \nu \in \IZ/3\IZ}} \hspace{-0.3cm}
\Psi^*_1 (M)_{\a,\b} \, \Psi_{1}(M)_{\g,\l} \, \Psi^*_{\! A_2} (M)_{\mu,\nu}
\wh{H}_{(1,3), (\b,2\nu+3\l)} (\t, \bar{\t}) 
\end{equation*}
for $M = \pmat{a & b \\ c&d} \in \SL_2(\IZ)$. Moreover, $H_{(1,3), (\a,2\mu+3\g)}$ has the shadow
\begin{equation*}
(2 v)^{\frac{3}{2}} \frac{\del}{\del \bar{\t}}
\wh{H}_{(1,3), (\a,2\mu+3\g)}  (\t, \bar{\t}) 
=
\frac{3i\sqrt{3}}{2 \pi \sqrt{2}} 
\wh{f}_{1,\a} (\t,\bar{\t}) \vth_{3, 2\mu+3\g} (-\bar{\t})
- \frac{3i}{\pi \sqrt{2}}
\wh{f}_{3,2\mu+3\g} (\t,\bar{\t}) \vth_{1, \a} (-\bar{\t}) .
\end{equation*}
The shadow of $f_{1,\a} f_{3,2\mu+3\g}$ has the same form, but with the minus sign replaced by a plus sign.

Combining these facts with \eqref{eq:depth_two_example} then shows that 
\begin{equation*}
\calT_\mu (\t) :=  \sum_{\a \in \IZ/2\IZ} 
\lp H_{(1,3), (\a,2\mu+3\a)} (\t) + f_{1,\a} (\t) f_{3,2\mu+3\a} (\t) \rp
- 8 h_{3,\mu} (\t)
\quad \mbox{for } \mu \in \IZ/3\IZ
\end{equation*}
has a vanishing shadow. 
By \eqref{eq:h_3_0_1_fourier_expansion_vafa_witten}, \eqref{eq:f1_f3_leading_fourier_coefs}, and the fact that 
$H_{(1,3), (\a,2\mu+3\a)}$ has nonzero Fourier coefficients only for positive exponents by \eqref{eq:depth_two_holomorphic_definition}, we have
\begin{equation}\label{eq:calT_fourier_expansion}
\calT_0 (\t) = \frac{1}{9} + O(q)
\andd
\calT_{\pm 1} (\t) = O \lp q^{\frac{2}{3}} \rp .
\end{equation}
In particular, $\calT_\mu$ is a holomorphic vector-valued modular form with weight $3$ and multiplier~$\Psi^*_{\! A_2}$.
Note that the vector space of such modular forms is one-dimensional\footnote{
This can be checked with the dimension formula given in \cite{Bor} or \cite{Bru}.
} 
and is spanned by the theta function $\Theta^{[E_6]}_{\mu}$ for the lattice $E_6$ and its cosets (see \eqref{eq:E6_theta_function}).
Comparing with \eqref{eq:calT_fourier_expansion}, we find $\Theta^{[E_6]}_{\mu}  = 9 \calT_\mu$ and thus
\begin{equation}\label{eq:h_3_mu_decomposition_Eisenstein}
h_{3,\mu} (\t) 
= \frac{1}{8} \sum_{\a \in \IZ/2\IZ} 
\lp H_{(1,3), (\a,2\mu+3\a)} (\t) + f_{1,\a} (\t) f_{3,2\mu+3\a} (\t) \rp
- \frac{1}{72} \Theta^{[E_6]}_{\mu} (\t) .
\end{equation}
Using this relation between $h_{3,\mu}$ and the functions studied in this paper, we prove Corollary~\ref{cor:h_3_mu_coupled_eisenstein_expression}.

\begin{proof}[Proof of Corollary~\ref{cor:h_3_mu_coupled_eisenstein_expression}]
For $\re (s) > \frac{1}{4}$, we split off the contribution of the three terms from the last factor of \eqref{eq:h_3_mu_coupled_eisenstein_expression} 
using \eqref{eq:depth_one_eisenstein} and \eqref{eq:coupled_Eisenstein_definition}
and rewrite
\begin{multline}\label{eq:h_3_mu_coupled_eisenstein_expression_rewriting_details1}
\wh{h}_{3,\mu} (\t, \bar{\t};s)
=
\frac{1}{8} \sum_{\a \in \IZ/2\IZ} 
\lp
\wh{H}_{(1,3),(\a,2\mu +3\a)} (\t, \bar{\t};s)
+
\wh{f}_{1,\a} (\t, \bar{\t}; s) \wh{f}_{3,2\mu+3\a} (\t, \bar{\t}; s) 
\rp
\\[-1.5ex]
-\frac{1}{72}
\frac{v^{2s}}{2}
\sum_{(c,d) \in \LL}
\frac{\Psi_{\! A_2} (M_{c,d})_{0,\mu}}{|c\t+d|^{4s} \lp c\t+d \rp^{3}}.
\end{multline}
Here we employ \eqref{eq:Weil_three_decomposition} and the unitarity of the Weil multiplier $\Psi_m$ to get the last term.
By Theorem~\ref{thm:coupled_Eisenstein_depth_two_analytic_continuation} and Proposition~\ref{prop:depth_one_eisenstein_analytic_continuation}, respectively, the first two terms analytically continue to $\re (s) > -\frac{1}{20}$. For $s=0$ they give
\begin{equation*}
\frac{1}{8} \sum_{\a \in \IZ/2\IZ} 
\lp \wh{H}_{(1,3), (\a,2\mu+3\a)} (\t, \bar{\t}) 
+ \wh{f}_{1,\a} (\t, \bar{\t}) \wh{f}_{3,2\mu+3\a} (\t, \bar{\t}) \rp .
\end{equation*}
By \eqref{eq:h_3_mu_decomposition_Eisenstein}, the claimed analytic continuation and value at $s=0$ for $\wh{h}_{3,\mu} (\t, \bar{\t};s)$ follows if the third term of \eqref{eq:h_3_mu_coupled_eisenstein_expression_rewriting_details1} continues as well and yields $- \frac{1}{72} \Theta^{[E_6]}_{\mu} (\t)$ at $s=0$.
With the term 
\begin{equation}\label{eq:weight_three_eisenstein_series}
\frac{v^{2s}}{2}
\sum_{(c,d) \in \LL}
\frac{\Psi_{\! A_2} (M_{c,d})_{0,\mu}}{|c\t+d|^{4s} \lp c\t+d \rp^{3}},
\end{equation}
we have an ordinary vector-valued Eisenstein series (with a spectral parameter $2s$) transforming with weight $3$ and the dual of the Weil representation for $A_2$.
It already converges to a holomorphic function of $s$ for $\re(s) > -\frac{1}{4}$.
So \eqref{eq:weight_three_eisenstein_series} at $s=0$ yields a weight $3$ holomorphic modular form with multiplier~$\Psi^*_{\! A_2}$ and with constant Fourier coefficient $1$ for $\mu = 0$.
Since the vector space of such modular forms is one-dimensional as discussed above \eqref{eq:h_3_mu_decomposition_Eisenstein}, this identifies the modular form as $\Theta^{[E_6]}_{\mu} (\t)$ and thereby completes the proof of the analytic continuation and the value at $s=0$.

The Fourier expansion for $h_{3,\mu}$, then follows from \eqref{eq:h_3_mu_decomposition_Eisenstein}
along with the Fourier coefficients of $H_{\vc{m},\vc{\mu}}$ given in \eqref{eq:depth_two_holomorphic_fourier_coef_definition} and those of $f_{m, \mu}$ given in Proposition~\ref{prop:depth_one_eisenstein_analytic_continuation}.
\end{proof}

\appendix

\section{Details on $N_{m,\mu} (n, k)$}\label{sec:solving_quadratic_equations} 
Here we give details on $N_{m,\mu} (n, k)$. We restrict $k$ to prime powers, as~$N_{m,\mu} (n, k)$ is multiplicative in $k$ by the Chinese remainder theorem. We start with odd primes and $n \neq 0$.

\begin{lem}\label{lem:quadratic_solution_count3}
Let $r \in \IN_0$, $m \in \IN$, $\mu \in \IZ/2m\IZ$, $n \in \IZ_{m,\mu}$ with $n \neq 0$, and $p$ be an odd prime. 
Then, writing $m = p^\a m_0$ and $-4mn = p^\ell n_0$ with $\a, \ell \in \IN_0$ and $p \nmid m_0, n_0 \in \IZ$, the following hold:\footnote{Here and throughout $\left(\frac{a}{b} \right)$ denotes the Kronecker symbol.}
\begin{enumerate}[leftmargin=*]
\item[\rm(1)] If $p^\a \mid \mu$, then we have $\ell - \a \geq 0$ and
\begin{equation*}
N_{m, \mu} (n, p^r) =
\begin{cases}
p^r 
\quad &\mbox{if } r \leq \a, \ell-\a, \\
p^{\left\lfloor \frac{r+\a}{2} \right\rfloor}
\quad &\mbox{if } \a < r \leq \ell-\a, \\
0
\quad &\mbox{if } \ell-\a < \a, r~\text{or }
\a \leq \ell-\a < r \mbox{ with } \ell \mbox{ odd}, \\
 \lp 1 + \lp \frac{n_0}{p} \rp \rp p^{\frac{\ell}{2}}
\quad &\mbox{if } \a \leq \ell-\a < r \mbox{ with } \ell \mbox{ even}.
\end{cases}
\end{equation*}

\item[\rm(2)] If $\mu = p^\b \mu_0$ with $0 \leq \b < \a$ and $p \nmid \mu_0$, then we have
\begin{equation*}
N_{m, \mu} (n, p^r) =
\begin{cases}
p^\b \ddiv{n+\frac{\mu^2}{4m}}{p^\b}
\quad &\mbox{if } \b < r, \\
p^r \ddiv{n+\frac{\mu^2}{4m}}{p^r}
\quad &\mbox{if } r \leq \b .
\end{cases}
\end{equation*}
\end{enumerate}
\end{lem}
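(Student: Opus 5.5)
Write $\mu$ also for an integer representative and set $A := n + \frac{\mu^2}{4m}$, an integer because $n \in \IZ_{m,\mu}$. The plan is to reduce the count of solutions of
\begin{equation*}
m\nu^2 - \mu\nu + A \equiv 0 \pmod{p^r}
\end{equation*}
to a count of square roots modulo prime powers, by completing the square where possible. Since $p$ is odd, $4$ is invertible modulo $p^r$. Multiplying by $4m$ gives the identity
\begin{equation*}
4m\lp m\nu^2-\mu\nu+A\rp = (2m\nu-\mu)^2 + 4mn = (2m\nu-\mu)^2 - p^\ell n_0 .
\end{equation*}
The factor $4m$ contains $p^\a$, so this identity loses information modulo $p^r$. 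I would therefore treat the two cases of the lemma by different reductions.

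\emph{Case (2): $\mu = p^\b\mu_0$ with $\b<\a$ and $p\nmid\mu_0$.} The polynomial $F(\nu) := m\nu^2-\mu\nu+A$ has $p$-adic valuations $v_p(m)=\a > \b = v_p(\mu)$, so the linear term dominates. If $r\le\b$, every coefficient except $A$ vanishes modulo $p^r$. The count is then $p^r$ exactly when $p^r\mid A$, and $0$ otherwise. If $r>\b$, a solution forces $p^\b \mid A$. Writing $A = p^\b A'$, the congruence becomes
\begin{equation*}
p^{\a-\b}m_0\nu^2 - \mu_0\nu + A' \equiv 0 \pmod{p^{r-\b}} .
\end{equation*}
Its derivative $2p^{\a-\b}m_0\nu - \mu_0$ is a unit, so by Hensel there is exactly one solution modulo $p^{r-\b}$. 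Lifting to $\nu$ modulo $p^r$ gives $p^\b$ solutions, which is the claimed formula.

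\emph{Case (1): $p^\a\mid\mu$.} Write $\mu = p^\a\mu'$. Since $m=p^\a m_0$, we have $\frac{\mu^2}{4m} = \frac{p^\a\mu'^2}{4m_0}$, so $A$ reduces to $p^\a$ times something. Also $-4mn = p^\a\lp \mu'^2 p^\a - 4m_0 A\rp$, which gives $\ell\ge\a$ because $p\mid 4m_0A$ is not needed here. Moreover $F = p^\a\lp m_0\nu^2-\mu'\nu\rp + A$.
\begin{itemize}
\item If $r\le\a$, then $F\equiv A \pmod{p^r}$. The count is $p^r$ iff $p^r\mid A$. Since $v_p(A) = v_p(-4mn/(4m)\cdot\ldots)$, this corresponds to $\ell-\a\ge r$ after checking $v_p(4m_0A - p^\a\mu'^2) = \ell-\a$.
\item If $r>\a$, a solution needs $p^\a \mid A$. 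Setting $A=p^\a A''$ reduces to counting
\begin{equation*}
m_0\nu^2-\mu'\nu+A'' \equiv 0 \pmod{p^{r-\a}}
\end{equation*}
and multiplying by $p^\a$. Now $m_0$ is a unit, so completing the square gives $(2m_0\nu-\mu')^2 \equiv -p^{\ell-\a}n_0 \pmod{p^{r-\a}}$. The problem becomes counting $x$ modulo $p^{R}$ with $x^2\equiv p^{L}u$, where $R=r-\a$ and $L=\ell-\a$.
\end{itemize}

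The standard square-root count for $x^2\equiv p^L u \pmod{p^R}$ with $u$ a unit is: $p^{\lfloor R/2\rfloor}$ if $R\le L$; $0$ if $L<R$ and $L$ is odd; and $\lp 1+\lp\frac{u}{p}\rp\rp p^{L/2}$ if $L<R$ and $L$ is even. One obtains this by writing $x=p^{j}y$ and applying Hensel's lemma. Substituting $R$, $L$ and multiplying by $p^\a$ yields the four displayed cases. For example, $p^\a\cdot p^{\lfloor (r-\a)/2\rfloor} = p^{\lfloor (r+\a)/2\rfloor}$ and $p^\a p^{(\ell-\a)/2}$ should match $p^{\ell/2}$ after parity bookkeeping. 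The Legendre symbol of $-n_0$ versus $n_0$ has to be reconciled with the factor $4m_0$ that was absorbed. The condition ``$\ell-\a<\a$'' corresponds to $p^\a\nmid A$.

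The main obstacle is this bookkeeping: matching the valuations $v_p(A)$ against $\ell-\a$, handling the boundary overlaps ($r\le \a$ versus $\a<r\le\ell-\a$), and getting the correct quadratic character. I expect the character to be $\lp\frac{-n_0/m_0}{p}\rp$ up to squares, and I would verify that it reduces to the paper's $\lp\frac{n_0}{p}\rp$ with its normalization of $n_0$.
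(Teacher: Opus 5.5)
Your strategy (complete the square, reduce to counting $x$ with $x^2\equiv p^L u \pmod{p^R}$, then Hensel) is the paper's. Your treatment of case (2) is correct: dividing by $p^\beta$ leaves the unit derivative $2p^{\alpha-\beta}m_0\nu-\mu_0$, so there is one root modulo $p^{r-\beta}$ and hence $p^\beta$ roots modulo $p^r$. The branch $r\le\alpha$ of case (1) is also correct.

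\textbf{The gap.} The completed square in case (1) for $r>\alpha$ is wrong. From $-4mn=\mu^2-4mA=p^{\alpha}(p^{\alpha}\mu'^2-4m_0A)=p^\ell n_0$ and $A=p^\alpha A''$ you get $\mu'^2-4m_0A''=p^{\ell-2\alpha}n_0$. So the correct reduction is
\[
(2m_0\nu-\mu')^2\equiv p^{\ell-2\alpha}n_0 \pmod{p^{r-\alpha}},
\]
not $-p^{\ell-\alpha}n_0$. Your opening identity already points to this. From $4mF(\nu)=(2m\nu-\mu)^2-p^\ell n_0$, the square must equal $+p^\ell n_0$. Also $(2m\nu-\mu)^2=p^{2\alpha}(2m_0\nu-\mu')^2$, so one removes $p^{2\alpha}$, not $p^{\alpha}$.

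With your values $L=\ell-\alpha$, $u=-n_0$, the square-root count produces the wrong output in four ways:
\begin{enumerate}
\item the threshold is $r\le\ell$ instead of $r\le\ell-\alpha$;
\item the parity condition is on $\ell-\alpha$ instead of $\ell$;
\item the power is $p^{(\ell+\alpha)/2}$ instead of $p^{\ell/2}$;
\item the character is $\left(\frac{-n_0}{p}\right)$, which differs from $\left(\frac{n_0}{p}\right)$ whenever $p\equiv 3\pmod 4$.
\end{enumerate}
None of these is a parity-bookkeeping or normalization issue that can be ``reconciled''. Your predicted character $\left(\frac{-n_0/m_0}{p}\right)$ does not reduce to $\left(\frac{n_0}{p}\right)$ either. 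So the step ``substituting $R$, $L$ and multiplying by $p^\alpha$ yields the four displayed cases'' fails as written.

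\textbf{The fix.} With $R=r-\alpha$, $L=\ell-2\alpha$, $u=n_0$, your (correct) square-root count gives the statement directly:
\begin{itemize}
\item for $\alpha<r\le\ell-\alpha$: $p^{\alpha+\lfloor (r-\alpha)/2\rfloor}=p^{\lfloor (r+\alpha)/2\rfloor}$;
\item for $\ell-\alpha<r$ with $\ell$ odd: zero, since $L\equiv\ell\pmod 2$;
\item for $\ell-\alpha<r$ with $\ell$ even: $p^\alpha\bigl(1+\left(\frac{n_0}{p}\right)\bigr)p^{(\ell-2\alpha)/2}=\bigl(1+\left(\frac{n_0}{p}\right)\bigr)p^{\ell/2}$;
\item the remaining zero case comes from $p^\alpha\nmid A \iff \ell-\alpha<\alpha$.
\end{itemize}
Your remark that ``$A$ reduces to $p^\alpha$ times something'' is false in general, precisely because $p^\alpha\mid A$ only when $\ell\ge 2\alpha$. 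You do handle this correctly later.

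The paper avoids your split at $r=\alpha$. Multiplying the original congruence by the unit $4m_0$ gives the exact identity $4m_0F(\nu)=p^\alpha(2m_0\nu-\mu')^2-p^{\ell-\alpha}n_0$. Hence $N_{m,\mu}(n,p^r)$ counts $x \bmod p^r$ with $p^\alpha x^2\equiv p^{\ell-\alpha}n_0\pmod{p^r}$. All four cases are read off this single congruence, with Hensel needed only when $\alpha\le\ell-\alpha<r$.
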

\begin{proof}
For (1), if $p^\a \!\mid\! \mu$, then $-4mn \!\in\! 4 m \IZ + \mu^2$ and so $\ell \!\geq\! \a$ as claimed. Shifting $\nu \!\mapsto\! \nu + \frac{\mu}{p^\a}$ 
in~\eqref{eq:number_congruence_solutions}, 
\begin{equation*}
N_{m,\mu} (n, p^r) = \left| \left\{\nu \pmod{p^r} : \ 
p^\a \nu^2 \equiv p^{\ell-\a} n_0 \pmod{p^r} \right\} \right| .
\end{equation*}
We then use Hensel's lemma to lift solutions between prime powers if $\a \leq \ell-\a < r$.
For (2), we note that the congruence that we need to solve becomes 
\begin{equation*}
p^\a \nu^2 - 2 \mu_0  p^\b \nu + 4 m_0 \lp n + \frac{\mu^2}{4m} \rp \equiv 0 \pmod{p^r},
\end{equation*}
which we study case-by-case for $\b < \a \leq r$, $\b < r < \a$, and $r \leq \b < \a$.
\end{proof}

The case $p=2$ is similar and gives the following result.

\begin{lem}\label{lem:quadratic_solution_count2}
Let $r \in \IN_0$, $m \in \IN$, $\mu \in \IZ/2m\IZ$, and $n \in \IZ_{m,\mu}$ with $n \neq 0$. Writing~$m = 2^\a m_0$ and $-4mn = 2^\ell n_0$ with $\a, \ell \in \IN_0$ and $2 \nmid m_0, n_0 \in \IZ$, the following hold.
\begin{enumerate}[leftmargin=*]
\item[\rm(1)] If $2^{\a + 1} \mid \mu$, then we have $\ell - \a - 2 \geq 0$ and
\begin{equation*}
N_{m, \mu} (n, 2^r) =
\begin{cases}
2^r 
\quad &\mbox{if } r \leq \a \mbox{ and } r \leq \ell-\a-2, \\
2^{\left\lfloor \frac{r+\a}{2} \right\rfloor}
\quad &\mbox{if } \a < r \leq \ell-\a-2, \\
0
\quad &\mbox{if } \ell-\a-2  < \a,r \mbox{ or } \a \leq \ell-\a-2 < r \mbox{ with } \ell \mbox{ odd},  \\
2^{\frac{\ell}{2}-1}
\quad &\mbox{if } \a \leq \ell-\a-2 \mbox{ with } \ell \mbox{ even} \mbox{ and } r=\ell-\a-1, \\
2^{\frac{\ell}{2}} \d_{n_0 \equiv 1 \pmod{4}}
\quad &\mbox{if } \a \leq \ell-\a-2 \mbox{ with } \ell \mbox{ even} \mbox{ and } r=\ell-\a, \\ 
2^{\frac{\ell}{2}+1} \d_{n_0 \equiv 1 \pmod{8}}
\quad &\mbox{if } \a \leq \ell-\a-2 \mbox{ with } \ell \mbox{ even} \mbox{ and } r \geq \ell-\a + 1.
\end{cases}
\end{equation*}

\item[\rm(2)] If $\mu = 2^\b \mu_0$ with $0 \leq \b \leq \a$ and $2 \nmid \mu_0$, then we have 
\begin{equation*}
N_{m, \mu} (n, 2^r) =
\begin{cases}
2^\b \ddiv{n+\frac{\mu^2}{4m}}{2^\b}
\quad &\mbox{if } \b < \a \mbox{ and } \b < r, \\
2^{\b+1} \ddiv{n+\frac{\mu^2}{4m}}{2^{\b+1}}
\quad &\mbox{if } \b = \a < r, \\
2^r \ddiv{n+\frac{\mu^2}{4m}}{2^r}
\quad &\mbox{if } r \leq \b \leq \a . \\
\end{cases}
\end{equation*}
\end{enumerate}
\end{lem}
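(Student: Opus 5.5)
The count $N_{m,\mu}(n,2^r)$ reduces to counting solutions of a congruence modulo $2^r$, exactly as in the proof of Lemma~\ref{lem:quadratic_solution_count3}; the only real difference is how Hensel lifting works at $p=2$. Multiplying the defining congruence in \eqref{eq:number_congruence_solutions} by $4m$ turns it into the condition $(2m\nu-\mu)^2 \equiv -4mn \pmod{4m\cdot 2^r}$. It is cleaner, though, to work directly with the form $m\nu^2-\mu\nu+n+\frac{\mu^2}{4m}\equiv 0 \pmod{2^r}$, keeping track of the $2$-adic valuations of $m=2^\a m_0$ and $\mu$.

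For part (1) I assume $2^{\a+1}\mid\mu$. Since $-4mn \in 4m\IZ+\mu^2$ and $4m$ and $\mu^2$ are both divisible by $2^{\a+2}$, we get $\ell\ge \a+2$. The shift $\nu\mapsto \nu+\frac{\mu}{2^{\a+1}}$ (after reducing via $m_0$ invertible mod $2^r$) then brings the congruence to the form
\[
2^\a \nu^2 \equiv 2^{\ell-\a-2} n_0' \pmod{2^r},
\]
where $n_0'\equiv n_0$ up to the odd square factor $m_0^{-2}$, so $n_0'$ is still odd with the same class mod $8$.

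I then split into cases. If $r\le\a$ and $r\le \ell-\a-2$, every $\nu$ is a solution, giving $2^r$. If $\a<r\le\ell-\a-2$, the condition is $\nu^2\equiv 0 \pmod{2^{r-\a}}$, which gives $2^{r-\lceil (r-\a)/2\rceil}=2^{\lfloor (r+\a)/2\rfloor}$. If $\ell-\a-2<\a$ and $\ell-\a-2<r$, or if the valuation $\ell-\a-2-\a$ is odd, there are no solutions, because the valuations on the two sides cannot match. In the remaining case I write $\nu=2^{(\ell-2\a-2)/2}\nu_1$ with $\nu_1$ odd, which requires $\ell$ even, and the problem becomes $\nu_1^2\equiv n_0' \pmod{2^{r-\ell+\a+2}}$. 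I count $\nu_1$ modulo the appropriate power of $2$ and then multiply by the number of lifts of $\nu$ modulo $2^r$. The standard facts about odd squares do the rest: the congruence mod $2$ always holds, mod $4$ it requires $n_0\equiv1 \pmod 4$, and mod $2^j$ with $j\ge3$ it requires $n_0\equiv 1\pmod 8$, in which case there are exactly $4$ solutions. These three facts produce the three values $2^{\ell/2-1}$, $2^{\ell/2}\d_{n_0\equiv1 \pmod 4}$ and $2^{\ell/2+1}\d_{n_0\equiv1 \pmod 8}$.

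For part (2), write $\mu=2^\b\mu_0$ with $\b\le\a$ and $\mu_0$ odd, and set $A:=n+\frac{\mu^2}{4m}\in\IZ$. The congruence reads $2^\a m_0\nu^2-2^\b\mu_0\nu+A\equiv0 \pmod{2^r}$. If $r\le\b$, it reduces to $A\equiv0 \pmod{2^r}$, giving $2^r\,\d$. If $\b<\a$ and $r>\b$, I divide by $2^\b$, which requires $2^\b\mid A$. The resulting polynomial $2^{\a-\b}m_0\nu^2-\mu_0\nu+A/2^\b$ has odd derivative, so Hensel's lemma gives exactly one root $\nu$ modulo $2^{r-\b}$, and this lifts to $2^\b$ residues modulo $2^r$. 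If $\b=\a<r$, the reduced polynomial $m_0\nu^2-\mu_0\nu+A/2^\a$ is $\equiv \nu(\nu-1)+A/2^\a \pmod 2$. This has two roots mod $2$ exactly when $2\mid A/2^\a$, and none otherwise. Both roots are simple since the derivative is odd, so each lifts uniquely, and the count is $2\cdot 2^\a=2^{\a+1}$ under the condition $2^{\a+1}\mid A$.

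The main obstacle is the bookkeeping at the boundary cases $r=\ell-\a-1$ and $r=\ell-\a$ in part (1). There the ambiguity in $\nu_1$, which is determined only modulo $2^{r-\ell/2+\ldots}$, interacts with the $2$-adic square conditions, and I would verify these exponents carefully by hand on a small example such as $\a=0$, $\ell=2$.
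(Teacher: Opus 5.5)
Your proposal is correct and follows essentially the same route as the paper, which only remarks that the case $p=2$ is handled like Lemma~\ref{lem:quadratic_solution_count3}: in part (1) you complete the square to reach $2^{\alpha}\nu^2 \equiv 2^{\ell-\alpha-2}n_0' \pmod{2^r}$, and in part (2) you do a case-by-case Hensel lift with odd derivative. The boundary bookkeeping you flag works out: write $\nu = 2^{h}\nu_1$ with $h = \frac{\ell}{2}-\alpha-1$; then each class of $\nu_1$ modulo $2^{r-\ell+\alpha+2}$ accounts for exactly $2^{\ell/2-1}$ residues $\nu \pmod{2^r}$, and multiplying this by the odd square-root counts $1$, $2\delta_{n_0\equiv 1 \pmod 4}$ and $4\delta_{n_0\equiv 1 \pmod 8}$ gives the three stated values.
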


Finally, we consider the case $n=0$.

\begin{lem}\label{lem:quadratic_solution_count_n_zero}
Let $r \in \IN_0$, $m \in \IN$, $\mu \in \IZ/2m\IZ$, and $p$ be a prime.
Writing $m = p^\a m_0$ with $\a \in \IN_0$ and $p \nmid m_0 \in \IN$, the following hold:
\begin{enumerate}[leftmargin=*]
\item[\rm(1)] If $0 \!\in\! \IZ_{m,\mu}\!$ (equiv.~$4m \! \mid \! \mu^2$), then either $2 p^\a \!\mid\! \mu$ or $\mu = 2 p^\b \mu_1$ with $0 \leq 2\b-\a < \b < \a$ and~$p \nmid \mu_1$.

\item[\rm(2)] If $4m \mid \mu^2$ and $2 p^\a \mid \mu$, then
\begin{equation*}
N_{m, \mu} (0, p^r) =
\begin{cases}
p^r \quad &\mbox{if } r \leq \a, \\
p^{\left\lfloor \frac{r+\a}{2} \right\rfloor}
\quad &\mbox{if } r > \a. 
\end{cases}
\end{equation*}

\item[\rm(3)] If $4m \mid \mu^2$ and $\mu = 2 p^\b \mu_1$ with $0 \leq 2\b-\a < \b < \a$ and $p \nmid \mu_1$, then 
\begin{equation*}
N_{m, \mu} (0, p^r) =
\begin{cases}
p^r \quad &\mbox{if } r \leq 2\b-\a, \\
0
\quad &\mbox{if } r > 2\b-\a. 
\end{cases}
\end{equation*}
\end{enumerate}
\end{lem}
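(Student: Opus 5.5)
The approach is to complete the square. For $n=0$ the polynomial defining $N_{m,\mu}(0,p^r)$ in \eqref{eq:number_congruence_solutions} is
\begin{equation*}
m\nu^2-\mu\nu+\frac{\mu^2}{4m}=\frac{(2m\nu-\mu)^2}{4m},
\end{equation*}
and this is an integer for every $\nu$ when $0\in\IZ_{m,\mu}$. Everything then reduces to reading off the $p$-adic valuation of this expression, with $m_0$ invertible modulo $p^r$. First, we must make sense of the valuation of $\mu$: since $\mu$ is only defined modulo $2m$, I fix a representative. The statements in (1)--(3) only involve $v_p(\mu)$ truncated at $v_p(2m)$, together with the unit part modulo $p$, so the choice does not matter.

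For (1), write $\mu=p^{b}\mu_0$ with $p\nmid\mu_0$. The condition $4m\mid\mu^2$ forces $2\mid\mu$ and $p^a\mid\mu^2$, so $2b\ge a$, with $2b\ge a+2$ if $p=2$. We treat odd and even $p$ separately.
\begin{itemize}
\item For $p$ odd: if $b\ge a$, then $2p^a\mid\mu$ because $\mu$ is even. Otherwise $b<a$, and we write $\mu=2p^b\mu_1$, which gives $0\le 2b-a<b<a$.
\item For $p=2$: if $b\ge a+1$, then $2\cdot 2^a\mid\mu$. Otherwise $b\le a$, and we set $\b:=b-1$. Then $\mu=2\cdot 2^{\b}\mu_0$, with $2\b-a=2b-2-a\ge 0$ and $\b<a$, so $2\b-a<\b$ follows.
\end{itemize}

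For (2), write $\mu=2p^a\mu'$. Then
\begin{equation*}
\frac{(2m\nu-\mu)^2}{4m}=\frac{p^a(m_0\nu-\mu')^2}{m_0},
\end{equation*}
and the same formula holds for $p=2$, because the factor $4$ cancels exactly. Since $p\nmid m_0$, the map $\nu\mapsto x:=m_0\nu-\mu'$ is a bijection modulo $p^r$. So $N_{m,\mu}(0,p^r)$ counts the $x \pmod{p^r}$ with $p^r\mid p^ax^2$. If $r\le a$, every $x$ works, giving $p^r$. If $r>a$, the condition is $p^{\lceil (r-a)/2\rceil}\mid x$, giving $p^{r-\lceil (r-a)/2\rceil}=p^{\lfloor (r+a)/2\rfloor}$ solutions.

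For (3), write $\mu=2p^b\mu_1$ with $b<a$. Then
\begin{equation*}
\frac{(2m\nu-\mu)^2}{4m}=\frac{p^{2b-a}\lp p^{a-b}m_0\nu-\mu_1\rp^2}{m_0}.
\end{equation*}
Because $a-b>0$ and $p\nmid\mu_1$, the bracket is a $p$-adic unit for every $\nu$. Hence the valuation is exactly $2b-a$, independently of $\nu$. Thus all $p^r$ residues are solutions when $r\le 2b-a$, and none are when $r>2b-a$.

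The only delicate point is the case analysis in (1), namely the prime $2$ and the ambiguity of $\mu$ modulo $2m$. I would handle it by fixing the representative first and checking the inequalities directly as above. The counts in (2) and (3) are routine once the square is completed.
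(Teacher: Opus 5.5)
Your argument is correct. Completing the square to $\frac{(2m\nu-\mu)^2}{4m}$ works, as does the case split in (1) on $v_p(\mu)$ versus $v_p(2m)$, including the reindexing $\b=b-1$ at $p=2$, and the valuation counts in (2) and (3) all check out. The paper states this lemma without a proof, but your route is the same shift-and-complete-the-square reduction it sketches for the $n\neq 0$ analogue (Lemma~\ref{lem:quadratic_solution_count3}); for $n=0$ that reduction comes down to reading off $v_p$ of $p^{\a}x^2$ or of $p^{2\b-\a}\cdot(\text{unit})^2$.
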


\section{Numerical Checks}\label{sec:numerical_checks}

In this section, we numerically check the identity given in Corollary~\ref{cor:h_3_mu_coupled_eisenstein_expression} for the coefficients $c_\mu (n)$ of the depth two mock modular form $h_{3,\mu}$ from Vafa--Witten invariants. Thus for $\mu \in \IZ/3\IZ$, $n \in \IN_0 + \epsilon_\mu$, and $N \in \IN$, we define the truncated sum \smash{$c^{[N]}_\mu (n)$} using the definition of $c_\mu (n)$ in \eqref{eq:c_mu_n_definition_from_corollary}, by restricting the first sum to $|\ell| \leq 2N$ and the second to $|\ell| \leq N$.
Both the first and second terms converge as $N \to \infty$. We choose the cutoff values $N$ and $2N$ for faster numerical convergence. We leave a detailed study of these properties for future work.
Figure~\ref{fig:coef_c_mu_50_149q3} displays \smash{$c_{0}^{[N]} (50)$} and \smash{$c_{1}^{[N]} (\frac{149}{3})$}.

\begin{figure}[H]
 \vspace{-8pt}
  \centering
    \includegraphics[scale=0.32]{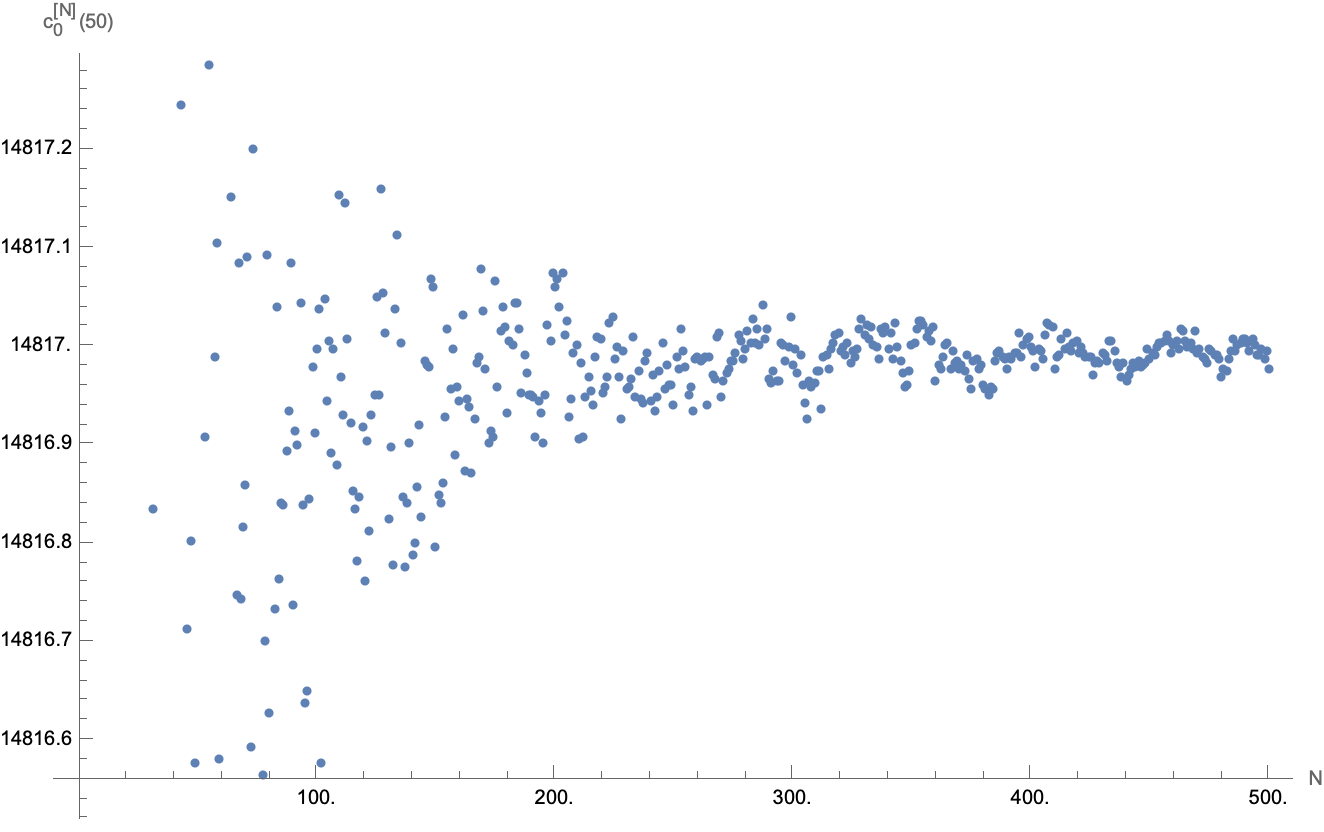}
    \includegraphics[scale=0.32]{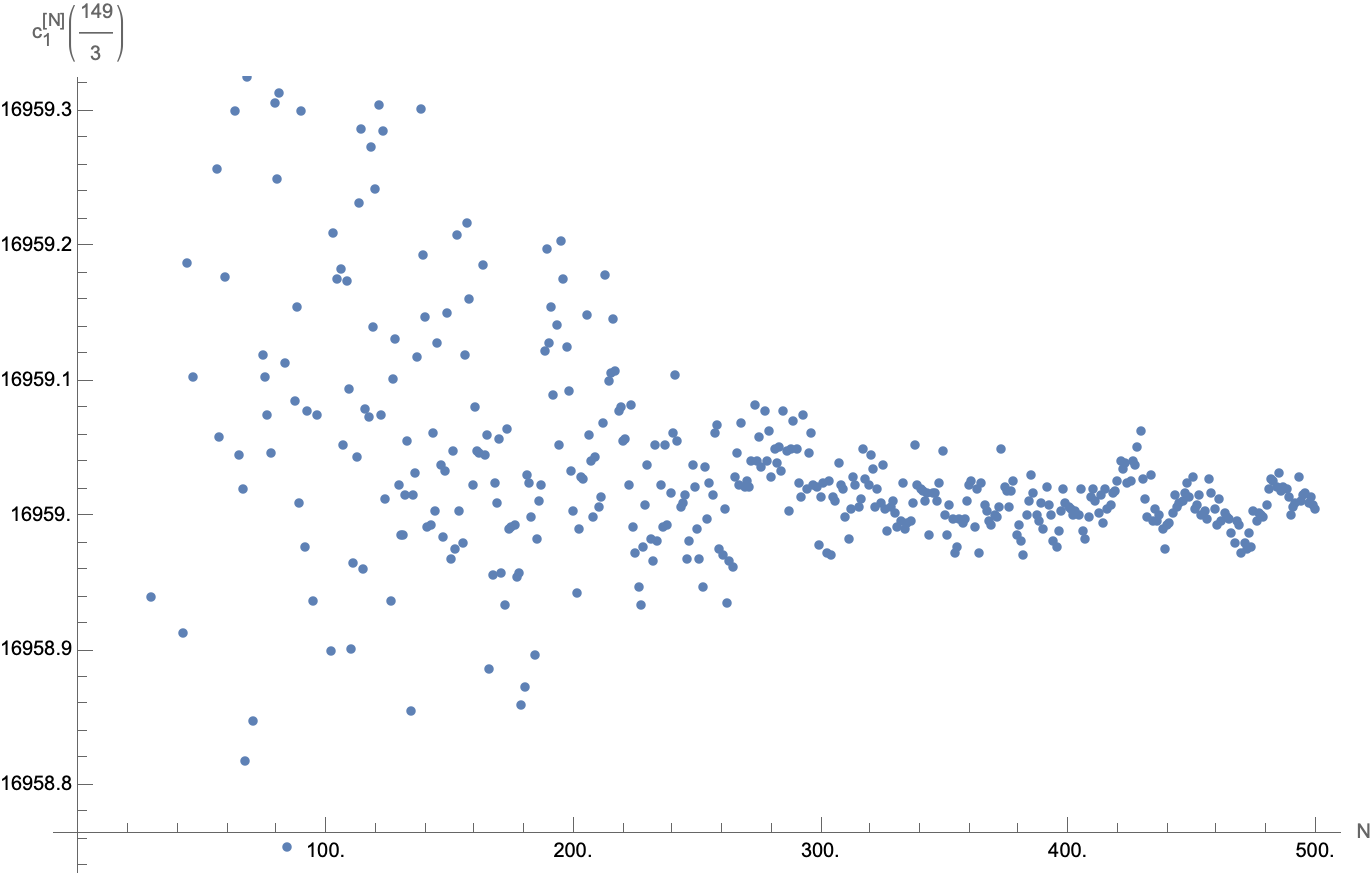}
     \vspace{-10pt}
    \caption{}
    \label{fig:coef_c_mu_50_149q3}
    \vspace{-15pt}
\end{figure}

Compare these with $c_{0} (50) = 14817$ and $c_{1} (\frac{149}{3}) = 16959$ (see Table 2 of \cite{CM}). Indeed,
\begin{equation*}
c_{0}^{[2500]} (50) = 14816.99974\ldots
\andd
c_{1}^{[2500]} \lp \frac{149}{3} \rp = 16959.00048\ldots .
\end{equation*}

\end{document}